\documentclass[12pt,a4paper]{amsart}
\usepackage[latin1]{inputenc}
\usepackage{amssymb, amsmath}
\usepackage{geometry}
\usepackage{enumerate}
\usepackage[colorlinks=true,linkcolor=blue,urlcolor=blue,citecolor=blue]{hyperref}
\geometry{a4paper,twoside,top=3cm,bottom=3cm,left=3cm,right=3cm,headsep=1cm,headheight=3mm}

\theoremstyle{plain}
\newtheorem{theorem}{Theorem}[section]
\newtheorem{corollary}[theorem]{Corollary}

\newtheorem{proposition}[theorem]{Proposition}

\theoremstyle{definition}
\newtheorem{definition}[theorem]{Definition}

\newtheorem	{question}{Question}

\theoremstyle{remark}
\newtheorem*{example}{Example}

\newcommand{\Ss}{\mathcal{S}}
\newcommand{\K}{\mathcal{K}}
\newcommand{\LL}{\mathcal{L}}
\newcommand{\abs}[1]{\lvert#1\rvert}
\newcommand{\norm}[1]{\lVert#1\rVert}

\newcommand{\bignorm}[1]{\bigl\lVert#1\bigr\rVert}
\newcommand{\Bigabs}[1]{\Bigl\lvert#1\Bigr\rvert}
\newcommand{\Bignorm}[1]{\Bigl\lVert#1\Bigr\rVert}
\renewcommand{\le}{\leqslant}

\newcommand{\term}[1]{{\textit{\textbf{#1}}}}

\makeatletter
\def\@tvsp{\mathchoice{{}\mkern-4.5mu}{{}\mkern-4.5mu}{{}\mkern-2.5mu}{}}
\def\ltrivert{\left|\@tvsp\left|\@tvsp\left|}
\def\rtrivert{\right|\@tvsp\right|\@tvsp\right|}
\def\bltrivert{\bigl|\@tvsp\bigl|\@tvsp\bigl|}
\def\brtrivert{\bigr|\@tvsp\bigr|\@tvsp\bigr|}
\makeatother

\DeclareMathOperator{\Range}{Range}
\DeclareMathOperator{\dist}{dist}

\renewcommand{\mid}{\::\:}

\begin{document}

\title[Disjointly homogeneous Banach lattices]{Disjointly homogeneous Banach lattices and applications}

\author[Julio Flores]{Julio Flores}
\address{Departamento de Matem\'{a}tica Aplicada, Escet, Universidad Rey
          Juan Carlos\\ 28933\\ M\'ostoles, Madrid, Spain.}
\email{julio.flores@urjc.es}

\author[Francisco. L. Hern\'{a}ndez]{Francisco L. Hern\'{a}ndez}
\address{Departamento de An\'{a}lisis Matem\'{a}tico, Universidad Complutense de Madrid\\ 28040\\ Madrid, Spain.}
\email{pacoh@mat.ucm.es}

\author[Pedro Tradacete]{Pedro Tradacete}
\address{Department of Mathematics\\ Universidad Carlos III de Madrid\\ 28911\\ Legan\'es, Madrid, Spain.}
\email{ptradace@math.uc3m.es}

\thanks{The research has been supported by the Spanish Ministerio de Econom\'{\i}a y Competitividad through grant MTM2012-31286 and Grupo UCM 910346. P. Tradacete has also been partially supported by MTM2010-14946.}

\subjclass[2010]{Primary 47B38, 46E30; Secondary 46B42, 47B07}

\keywords{Banach lattice; disjoint sequences; strictly singular operator}

\begin{abstract}
This is a survey on disjointly homogeneous Banach lattices and their applicactions. Several structural properties of this class are analyzed. In addition  we show how these spaces provide a natural framework for studying the compactness of powers of operators allowing for a unified treatment of well-known results.
\end{abstract}

\maketitle

\tableofcontents

\section{Introduction}
This paper is a survey on the properties  of the recently introduced class of  disjointly homogeneous Banach lattices, as well as the compactness properties of the operators defined on these spaces. It collects information previously from the papers (\cite{FTT2, FHST, FHSTT, HST}) and some new facts.

Recall that a Banach lattice $E$ is \em disjointly homogeneous \em if two arbitrary sequences of normalized pairwise disjoint elements in $E$ always have equivalent subsequences. The motivation  which gave rise to the definition of a disjointly homogeneous space was to decide the  compactness  of the iterations of a given operator which already enjoyed nice close-to-compactness properties. Thus the first and third authors together with V. G. Troitsky considered this notion for the first time in \cite{FTT2}.

Later on E. M. Semenov and the authors (\cite{FHST}) analyzed the general problem of obtaining compactness of the iterations of a strictly singular operator on a Banach lattice, extending the classical result by V. D. Milman
(\cite{Milman}) which states that strictly singular operators in $L_p(\mu)$, $1\leq p\leq\infty,$ have compact square. In fact, one of the purposes of this survey is to offer compelling evidence  that the class of disjontly homogeneous Banach lattices constitutes a proper setting for treating these questions. It is particularly evident in connection with the Kato property, i.e. when the class of compact and strictly singular operators coincide (\cite{HST}). From the point of view of the structural properties of the class of disjointly homogeneous Banach lattices, several aspects have been explored by  V. G. Troitsky and E. Spinu jointly with the authors in \cite{FHSTT}; particularly two of these aspects have been  studied in detail, namely  the problem of self-duality and the problem of obtaining complemented copies of the span of disjoint sequences.

The paper is organized in two clearly differentiated parts. The first one includes sections one through five which focus on disjointly homogeneous Banach lattices themelves. Definitions and examples are given, and structure properties such as the self-duality of this class and the existence of complemented copies of disjoint sequences are addressed.   The second part, including the remaining sections, focuses on the operators defined on disjointly homogeneous Banach lattices and the properties they have; particularly, attention is given  to the compactness properties of the iterations of endomorphisms    as well as the relation between disjointly homogeneous Banach lattices and the Kato property. The paper concludes with a list of some open questions.

We follow standard terminology concerning Banach spaces and Banach lattices as in the monographs \cite{AB, LT1, LT2, MN}. In the sequel by an operator we  always mean a bounded linear operator. Given a sequence $(x_n)$ in a Banach space, we write $[x_n]$ for the closed linear span of the sequence. Given basic sequences $(x_n)$, $(y_n)$, and $C>0$, the notation $(x_n)\overset{C}{\sim}(y_n)$ means that for every scalars $(a_n)_{n=1}^\infty$
$$
  C^{-1}\Bignorm{\sum_{n=1}^\infty a_n y_n}
  \le \Bignorm{\sum_{n=1}^\infty a_n x_n}
  \le C\Bignorm{\sum_{n=1}^\infty a_n y_n}.
$$

\section{Disjointly homogeneous Banach lattices: definition and examples}\label{definitions and examples}

The notion of disjointly homogeneous Banach lattice was first introduced in \cite{FTT2}; let us recall its definition.

\begin{definition}
\label{DH definition}
A Banach lattice $E$ is \term{disjointly homogeneous (DH)} if for every pair $(x_n)$, $(y_n)$ of normalized disjoint sequences in $E$, there exist $C>0$ and a  subsequence $(n_k)$ such that $(x_{n_k})\overset{C}{\sim}(y_{n_k})$.
\end{definition}

Our interest will focus on those Banach lattices for which there is $1 \le  p\le \infty$ such that every normalized disjoint sequence $(x_n)$ has a subsequence $(x_{n_k})$ equivalent to the unit vector basis of $\ell_p$ (or $c_0$ for $p=\infty$), i.e,

$$
  C^{-1}\big(\sum_{k=1}^\infty |a_k|^p\big)^{1/p}\le
\Bignorm{\sum_{k=1}^\infty a_k x_{n_k}}
  \le C(\sum_{k=1}^\infty |a_k|^p)^{1/p},
$$
for some $C>0$.
 These form an important class of DH spaces, which will be denoted \term{$p$-disjointly homogeneous}, in short $p$-DH (resp. $\infty$-disjointly homogeneous, in short $\infty$-DH). Clearly, $L_p$-spaces are $p$-DH.

Note that 1-DH Banach lattices have been considered previously under a different approach. Recall that a Banach lattice $E$ has the \emph{positive Schur property} if every weakly null sequence $(x_{n})$ of positive vectors is norm convergent, see \cite{Kaminska-Mastylo,wnuk_glasgow,wnuk_RMC,wnuk_survey}. It follows from, e.g.~\cite[Corollary~2.3.5]{MN}, that it suffices to verify this condition for disjoint sequences. Using Rosenthal's $\ell_1$-theorem, it was proved in \cite{FHSTT} that a Banach lattice $E$ is 1-DH if and only if $E$ has the positive Schur property.

Observe that, in the definition of a DH Banach lattice, it is important to allow for the possibility of passing to subsequences in order to get the required equivalence. Otherwise, the class reduces to the spaces $L_p(\mu)$ or $c_0(\Gamma)$ (\cite[Proposition 2.2]{FHSTT}).

Thus $L_p(\mu)$-spaces exhibit a particularly strong version of this definition. But these are not the only examples. For instance, in the context of function spaces, Lorentz  spaces $\Lambda(W,q)$ and $L_{p,q}$ on $[0,1]$ are
$q$-DH.

Recall that given $1\leq q<\infty$ and $W$ a positive, non-increasing function in $[0,1]$, such that $\lim_{t\rightarrow0}W(t)=\infty,\, W(1)>0$ and
$\int_0^1W(t)dt=1$, the \textit{Lorentz function space} $\Lambda(W,q)[0,1]$ is the space of all measurable functions $f$ on $[0,1]$ such that
$$\|f\|=\Big(\int_0^1 f^*(t)^q W(t) dt\Big)^{1/q}<\infty,$$ where $f^*$ denotes the decreasing rearrangement of the function $f$ (cf \cite[Chapter 2]{LT2}).
Let us also recall that for $1< p< \infty$ and $1\leq q\leq\infty$, the Lorentz space $L_{p,q}[0,1]$ is the space of all measurable functions $f$ in $[0,1]$ such that
$$\|f\|_{p,q}=\left\{
                \begin{array}{ll}
                  \bigg(\int_0^{\infty}(t^{1/p}f^*(t))^q\frac{dt}{t}\bigg)^{1/q}<\infty &\textrm{ for }1\leq q<\infty, \\
                  \underset{t>0}{\sup}\,\,t^{1/p}f^*(t)<\infty, &\textrm{ if }q=\infty. \\
                \end{array}
              \right.$$

The  following (see \cite{Carothers-Dilworth}, \cite[Proposition 5.1]{FJT} ) shows that these belong to the class of DH spaces.

\begin{proposition}\label{CarothersDilworth}
Let $1\leq q<\infty$. Let $(f_n)_n$ be a disjoint normalized sequence in $\Lambda(W,q)[0,1]$	
(resp. $L_{p,q}[0,1]$). For each $\varepsilon>0$, there exists a
subsequence $(f_{n_k})$ which is $(1+\varepsilon)$-equivalent to
the unit vector basis of $\ell_q$, whose span is a complemented
subspace of $\Lambda(W,q)[0,1]$ (resp. $L_{p,q}[0,1]$).
\end{proposition}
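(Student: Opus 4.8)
The plan is to establish the result first for Lorentz spaces $L_{p,q}[0,1]$ with $1 \le q < \infty$ (the case $\Lambda(W,q)$ being entirely analogous with the role of the power weight $t^{q/p-1}$ played by $W$), exploiting the fact that on a finite measure space a disjoint sequence either has a subsequence equivalent to the $\ell_q$-basis ``by homogeneity of the norm'' or else concentrates on sets of vanishing measure where the quasi-norm behaves like a genuine $L_q$-norm. First I would normalize and pass to a subsequence so that the supports $A_n = \supp f_n$ satisfy $\mu(A_n) \to a \ge 0$. If $a>0$, a gliding-hump/exhaustion argument together with the disjointness of the $A_n$ forces $a=0$ after all (the $A_n$ cannot all carry comparable measure and stay disjoint inside $[0,1]$), so in fact $\mu(\bigcup_{n\ge N} A_n) \to 0$. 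The key analytic input is then the standard fact (see \cite{Carothers-Dilworth}) that for functions supported on sets of small measure the Lorentz quasi-norm is, up to $(1+\varepsilon)$, comparable to a weighted $L_q$-expression that is additive over disjoint supports; concretely, for disjointly supported $g_1,\dots,g_m$ with total support measure small enough one has
$$
\Bignorm{\sum_{k=1}^m g_k}_{p,q}^q \approx \sum_{k=1}^m \norm{g_k}_{p,q}^q,
$$
with constant $1+\varepsilon$. This yields the $(1+\varepsilon)$-equivalence of a suitable subsequence $(f_{n_k})$ to the unit vector basis of $\ell_q$.

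Next I would address complementation of $[f_{n_k}]$. The natural candidate projection is
$$
P(f) = \sum_{k} \Big(\int f \,\phi_{n_k}\,d\mu\Big)\, f_{n_k},
$$
where $\phi_{n_k}$ is a norming functional for $f_{n_k}$ chosen supported on $A_{n_k}$ — one takes $\phi_{n_k}$ proportional to $\mathrm{sign}(f_{n_k})\,(f_{n_k}^*)^{q-1}$ composed appropriately with the rearrangement, i.e. the Hölder-dual density. The point is that $\phi_{n_k} \in L_{p',q'}[0,1] = (L_{p,q})^*$, that $\int f_{n_j}\phi_{n_k} = \delta_{jk}$, and that the sequence $(\phi_{n_k})$ is again $(1+\varepsilon)$-equivalent to the $\ell_{q'}$-basis by the same concentration argument applied in the dual space. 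Boundedness of $P$ then follows by pairing the $\ell_q$-estimate for $(f_{n_k})$ with the $\ell_{q'}$-estimate for $(\phi_{n_k})$ via Hölder's inequality, after passing to a further subsequence to control the interaction terms coming from the non-disjointness of the rearrangements (the functionals are disjointly supported, but the quasi-norm of $L_{p',q'}$ of a sum is only asymptotically additive).

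The main obstacle, and the place where care is genuinely needed, is that the Lorentz (quasi-)norm is computed through the decreasing rearrangement and is therefore \emph{not} a lattice norm additive over disjoint supports the way $\norm{\cdot}_p$ is; all the ``$\approx$'' comparisons above hold only in the limit as the supports shrink, so one must interleave the choice of subsequence with the required accuracy $\varepsilon$, i.e. choose $n_k$ so that $\mu\big(\bigcup_{j\ge k}A_{n_j}\big)$ is small enough that the cumulative error in both the primal and the dual estimates stays below $\varepsilon$. This is a routine but slightly delicate diagonal extraction. Once that bookkeeping is set up, the rest — verifying $P^2 = P$, $\Range P = [f_{n_k}]$, and the norm bound — is straightforward. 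For $\Lambda(W,q)$ one repeats the argument verbatim, using that $W \in L_1$ and $\lim_{t\to 0}W(t) = \infty$ only to guarantee that the concentration phenomenon occurs; the additivity-in-the-limit of $\norm{\cdot}_{\Lambda(W,q)}^q$ over disjoint supports of small measure is again the crucial lemma, available from \cite{Carothers-Dilworth} and \cite{FJT}.
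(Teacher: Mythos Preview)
The paper does not give its own proof of this proposition; it merely cites \cite{Carothers-Dilworth} and \cite[Proposition~5.1]{FJT} and moves on. Your sketch is in line with the argument in those references: disjointness in $[0,1]$ forces $\mu(\supp f_n)\to 0$, on small supports the $q$-th power of the Lorentz norm is $(1+\varepsilon)$-additive over disjoint pieces, and complementation comes from a projection built out of disjointly supported biorthogonal functionals in the dual Lorentz space. So the overall strategy is correct and is exactly what the paper is pointing to.

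Two places deserve a little more care before this could stand as a proof. First, your description of the norming functional as ``proportional to $\mathrm{sign}(f_{n_k})(f_{n_k}^*)^{q-1}$ composed appropriately with the rearrangement'' is the $L_q$ intuition and is not quite right in $L_{p,q}$; the actual dual pairing involves the level-set structure of $f_{n_k}$ rather than a pointwise power, and what one really uses is that any choice of disjointly supported $\phi_{n_k}\in L_{p',q'}$ with $\langle\phi_{n_k},f_{n_k}\rangle=1$ will do, together with the same small-support additivity lemma in the dual. Second, your duality argument as written needs $q'<\infty$, i.e.\ $q>1$; when $q=1$ the dual is $L_{p',\infty}$ and the $(\phi_{n_k})$ are not equivalent to the $c_0$-basis in general, so the Hölder pairing step has to be replaced by the direct estimate $\bigl\lVert\sum_k\abs{\phi_{n_k}}\bigr\rVert_{p',\infty}\le C$ for disjoint normalized $\phi_{n_k}$ (or one invokes the $\ell_1$-complementation lemma cited later in the paper). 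Neither issue is fatal, but both should be made explicit.
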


For the maximal Lorentz spaces $L_{p,\infty}[0,1]$, $1<p<\infty$, the situation is different. Indeed, the  space $L_{p,\infty}[0,1]$ satisfies that every disjoint  sequence in its order continuous part $(L_{p,\infty}(0,1))^o$ (the closed linear span of the characteristic functions in $L_{p,\infty}[0,1]$) has a subsequence equivalent to the unit vector basis of $c_0$ (see \cite{Novikov}). But there exists a disjoint normalized sequence $(f_n)$ in $L_{p,\infty}[0,1]$ equivalent to the unit basis of $\ell_p$ (which generates a complemented subspace) (see \cite{FHST}). Therefore, $L_{p,\infty}[0,1]$ is not DH.

In the class of  Orlicz spaces we have further examples of DH spaces. Recall that given an Orlicz function $\varphi:\mathbb{R}_+\rightarrow \mathbb{R}_+$, the \textit{Orlicz function space} $L_\varphi(\Omega,\Sigma,\mu)$ is the space of all $\Sigma$-measurable functions $f$ on $\Omega$ such that $\int_\Omega\varphi\Big(\frac{|f|}{r}\Big)d\mu <\infty$ for some $r>0$. This is a Banach lattice endowed with the  Luxemburg norm
$$\|f\|_{L_\varphi}=\inf\bigg\{r>0:\int_\Omega\varphi\Big(\frac{|f|}{r}\Big)d\mu \leq 1\bigg\}.$$

A characterization of DH Orlicz spaces, over finite \cite{FHST} and infinite \cite{FHSTT} measure spaces, is known. In order to state this, let us first recall the definition of certain subsets of the space of continuous functions $C[0,1]$ associated to the Orlicz function $\varphi$ (see \cite{LT3}):

$$E_{\varphi,s}^\infty=\overline{\Big\{\frac{\varphi(r\cdot)}{\varphi(r)}:r\geq
s\Big\}},\,\,\,\,\,\,\,\,E_\varphi^\infty=\bigcap_{s>1}
E_{\varphi,s}^\infty\,,\, \textrm{ and
}C_\varphi^\infty=\overline{conv}(E_\varphi^\infty).$$

Similarly,  let
\begin{displaymath}
  E_{\varphi}(0,\infty)=\overline{\Bigl\{
    F \in C[0,1]\mid F(\cdot)=\frac{\varphi(s\cdot)}{\varphi(s)}\, , \textrm{ for some }\, s\in (0,\infty)
  \Bigr\}},
\end{displaymath}
and $C_{\varphi}(0,\infty)=\overline{\rm conv}\, E_{\varphi}(0,\infty)$, in the space $C[0,1]$.

As usual, for a subset $A\subset C[0,1]$ and a function $h$, we will write $A\cong\{h\}$ whenever every function in $A$ is equivalent to the function $h$ at $0$.

\begin{theorem}\label{DH-Orlicz}
$ $
\begin{enumerate}
\item A separable Orlicz space $L_\varphi[0,1]$ is DH if and only if $E_{\varphi}^\infty\cong\{t^p\}$ (for some $1\le p<\infty$).
\item A separable Orlicz space $L_{\varphi}(0,\infty)$ is DH if and only if $C_{\varphi}(0,\infty)\cong\{t^{p}\}$ (for some $1\le p<\infty$).
\end{enumerate}
Moreover, in each case the space is $p$-DH for the corresponding $p$.
\end{theorem}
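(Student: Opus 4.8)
The plan is to characterize when a separable Orlicz space admits the right kind of disjoint-sequence behavior by connecting disjoint normalized sequences with the sets $E_\varphi^\infty$ and $C_\varphi(0,\infty)$ of rescaled copies of $\varphi$. The underlying principle, going back to Lindenstrauss--Tzafriri, is that a normalized disjoint sequence in $L_\varphi$ supported on sets of small measure behaves, up to small perturbation and passing to a subsequence, like the unit vector basis of an Orlicz sequence space $\ell_\psi$, where $\psi$ is a suitable limit of functions $\varphi(r_n\,\cdot)/\varphi(r_n)$; and conversely, every $\psi$ obtained as such a limit (more precisely, every $\psi \in C_\varphi^\infty$ in the $[0,1]$ case, or every $\psi \in C_\varphi(0,\infty)$ in the $(0,\infty)$ case) is realized, up to equivalence, by some disjoint normalized sequence whose span is even complemented.

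First I would treat statement (i). For the ``if'' direction, assume $E_\varphi^\infty \cong \{t^p\}$. Take two arbitrary normalized disjoint sequences $(x_n)$, $(y_n)$ in $L_\varphi[0,1]$. Since $L_\varphi[0,1]$ is separable, it is order continuous, so by a standard gliding-hump/small-perturbation argument I may assume both sequences have pairwise disjoint supports of measure tending to zero. Then I invoke the representation theorem: after passing to a subsequence, $(x_n)$ is equivalent to the unit vector basis of $\ell_\psi$ for some $\psi \in C_\varphi^\infty$, and likewise $(y_n)$ for some $\tilde\psi \in C_\varphi^\infty$. The hypothesis $E_\varphi^\infty \cong \{t^p\}$ forces $C_\varphi^\infty \cong \{t^p\}$ as well (taking convex hulls and closures of functions equivalent to $t^p$ keeps them equivalent to $t^p$ near $0$), hence $\ell_\psi = \ell_p$ and $\ell_{\tilde\psi} = \ell_p$ with uniform constants; thus $(x_n)$ and $(y_n)$ have further subsequences both equivalent to the $\ell_p$-basis, so $L_\varphi[0,1]$ is $p$-DH, in particular DH. For the ``only if'' direction, suppose $E_\varphi^\infty \not\cong \{t^p\}$ for any $p$. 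Then $E_\varphi^\infty$ contains at least two non-equivalent functions $\psi_1, \psi_2$ (if it contained only functions all equivalent to a single $h$, one shows $h$ must be equivalent to a power $t^p$, because the set $E_\varphi^\infty$ is closed under the multiplicative-shift structure inherited from $\varphi$). Realize each $\psi_i$ by a disjoint normalized sequence in $L_\varphi[0,1]$; these two sequences have no equivalent subsequences, so $L_\varphi[0,1]$ is not DH.

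Next I would handle statement (ii) for $L_\varphi(0,\infty)$. The difference is that a disjoint sequence can now live on sets of large measure (near infinity) as well as small measure, so the relevant set of limiting Orlicz functions is governed by $E_\varphi(0,\infty)$ and its convex hull $C_\varphi(0,\infty)$, which records rescalings $\varphi(s\,\cdot)/\varphi(s)$ for all $s\in(0,\infty)$, not just $s$ large. With this replacement the argument is parallel: assuming $C_\varphi(0,\infty)\cong\{t^p\}$, every disjoint normalized sequence has a subsequence equivalent to the $\ell_p$-basis (after splitting into a part on small-measure sets and a part on large-measure sets and handling each via the corresponding representation), giving $p$-DH; conversely, if $C_\varphi(0,\infty)$ is not equivalent to a single power, two inequivalent limiting functions can be realized by disjoint sequences, breaking DH. Throughout I would quote the complementation statement in the style of Proposition~\ref{CarothersDilworth} to get the ``whose span is complemented'' strengthening for free, though it is not needed for the DH conclusion itself.

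The main obstacle is the realization/representation lemma in both directions: showing that (a) every normalized disjoint sequence, after perturbation and passing to a subsequence, is uniformly equivalent to the unit vector basis of $\ell_\psi$ for some $\psi$ in the appropriate set, and (b) conversely every such $\psi$ is attained. Part (a) requires care in extracting, from the norming conditions $\int \varphi(|x_n|) \approx 1$, a single limiting Orlicz function controlling all finite linear combinations simultaneously — this is where compactness of $E_{\varphi,s}^\infty \subset C[0,1]$ and a diagonal argument enter — and part (b) requires an explicit construction of disjoint functions whose Luxemburg-norm computation reproduces a prescribed convex combination of shifts of $\varphi$. The finite-versus-infinite measure dichotomy (which $s$'s are allowed) is exactly what distinguishes the two conditions $E_\varphi^\infty\cong\{t^p\}$ and $C_\varphi(0,\infty)\cong\{t^p\}$, and keeping that bookkeeping straight is the delicate point.
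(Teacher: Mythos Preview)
Your proposal is correct and follows essentially the same approach as the paper: the paper (being a survey) does not give a full proof but indicates that the finite-measure case rests on the Lindenstrauss--Tzafriri representation from \cite{LT3} linking disjoint sequences in $L_\varphi[0,1]$ to Orlicz sequence spaces $\ell_\psi$ with $\psi\in C_\varphi^\infty$, and that the infinite-measure case uses Nielsen's theorem \cite[Theorem~1.1]{Nielsen} to the same effect with $C_\varphi(0,\infty)$ in place of $C_\varphi^\infty$. Your sketch fleshes out exactly this route, including the correct observation that $E_\varphi^\infty$ is closed under multiplicative shifts (so a single equivalence class forces $h\sim t^p$) and that $E_\varphi^\infty\cong\{t^p\}$ passes to $C_\varphi^\infty\cong\{t^p\}$ by convexity.
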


The proof of the finite measure case is based on techniques from \cite{LT3}. For instance, if $\varphi(x)=x^plog(1+x)$, for $1\le p<\infty$, then  $L_\varphi[0,1]$ is $p$-DH.

For the infinite measure case, among other things, the proof makes use of \cite[Theorem 1.1]{Nielsen}, which asserts that if an Orlicz function $F$ is equivalent to a function in $C_{\varphi}(0,\infty)$ then $L_{\varphi}(0,\infty)$ contains a lattice copy of the Orlicz sequence space $\ell_F$ and, conversely, every normalized disjoint sequence in  $L_{\varphi}(0,\infty)$ contains a subsequence equivalent to the unit vector basis of $\ell_F$ for some $F\in C_{\varphi}(0,\infty)$.

In the discrete setting the class of DH Banach lattices is considerable smaller. Clearly, it contains the spaces $c_0$ and $l_p$, $1\le p\le\infty$; by contrast, Orlicz and Lorentz sequence spaces other than $\ell_p$  cannot be DH. This follows from the well known fact that $E$ is a stable space (\cite{Krivine-Maurey}). Indeed, if one starts with  a given pairwise disjoint sequence $(x_n)$ in $E$, then the stability implies that there 	is some block sequence $(w_n)$ of $(x_n)$  equivalent to the unit vector basis of some $\ell_p$ for $1\le p<\infty$. If $E$ is assumed to be DH, then for some subsequence $(n_k)$, $(x_{n_k})$ and $(w_{n_k})$ are equivalent to the unit vector basis of $\ell_p$. But the unit basis in $E$ is symmetric; thus, it must be  equivalent to the unit vector basis of $\ell_p$.

Tsirelson space also falls within the category of DH Banach lattices, as shown in \cite{FTT2}. As a  consequence, we deduce that DH Banach lattices need not be $p$-DH for any $1\le p\le \infty$. Some modifications of Tsirelson space, together with Baernstein and Schreier spaces (cf. \cite{Casazza-Shura}) are easily seen to be also DH.

Observe that in the definition of a DH Banach lattice it is enough to consider only positive disjoint normalized (or even semi-normalized) sequences. A formally weaker version of DH has been considered also in \cite{FHSTT}: namely, a Banach lattice is \emph{quasi-DH} if any two sequences of disjoint elements $(x_n)$ and $(y_n)$ have equivalent subsequences. This means that $(x_{n_k})\sim(y_{m_k})$ for some, non necessarily equal, subsequences $(n_k)$ and  $(m_k)$. The following result follows from a standard application, based on \cite{Schlumprecht}, of the infinite Ramsey theorem, and solves a natural question posed in \cite{FHSTT}.

\begin{proposition}
A Banach lattice is DH if and only if it is quasi-DH.
\end{proposition}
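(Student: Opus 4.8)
The implication ``DH $\Rightarrow$ quasi-DH'' is immediate, since in the definition of quasi-DH one may choose the two subsequences to coincide. For the converse I would argue as follows. Assume $E$ is quasi-DH and fix normalized disjoint sequences $(x_n),(y_n)$ in $E$, which by the preceding remark we may take to be positive. For each integer $C\ge 1$ set
\[
  \mathcal A_C=\Bigl\{\,M=\{m_1<m_2<\cdots\}\in[\mathbb N]^{\omega}\ :\ (x_{m_k})_k\overset{C}{\sim}(y_{m_k})_k\,\Bigr\}.
\]
Since it is enough to test the two defining inequalities on finitely supported rational scalars, $\mathcal A_C$ is an intersection of countably many clopen conditions, hence closed, so the infinite Ramsey theorem (in its Galvin--Prikry form) applies to it. Applying it repeatedly, for $C=1,2,3,\dots$, to produce a decreasing chain $N_1\supseteq N_2\supseteq\cdots$ of infinite sets with $[N_C]^{\omega}$ entirely inside or entirely outside $\mathcal A_C$, and then diagonalizing to an infinite $N_0$ with $N_0\setminus N_C$ finite for every $C$, I reach a clean alternative: either for some $C$ a cofinite subset $S$ of $N_0$ satisfies $(x_n)_{n\in S}\overset{C}{\sim}(y_n)_{n\in S}$ --- in which case DH holds for the pair $(x_n),(y_n)$ --- or, for every infinite $S\subseteq N_0$ and every $C$, $(x_n)_{n\in S}$ and $(y_n)_{n\in S}$ fail to be $C$-equivalent; that is, \emph{no common-index subsequence of $(x_n)_{n\in N_0}$ and $(y_n)_{n\in N_0}$ is equivalent}.

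The task is to rule out this second alternative, and here quasi-DH enters. Since $(x_n)_{n\in N_0}$ and $(y_n)_{n\in N_0}$ are normalized and disjoint, quasi-DH provides infinite $A=\{a_k\}$ and $B=\{b_k\}$ in $N_0$ and a constant $C_0$ with $(x_{a_k})_k\overset{C_0}{\sim}(y_{b_k})_k$. If $A\cap B$ is infinite this already contradicts the second alternative; otherwise, after discarding finitely many indices, passing to a common-index subsequence of $(a_k)$ and $(b_k)$ (which does not change the constant $C_0$, since common-index subsequences of equivalent sequences remain equivalent with the same constant), and possibly interchanging the roles of $(x_n)$ and $(y_n)$, I may assume that the two index sequences interlace, $a_1<b_1<a_2<b_2<\cdots$, while still $(x_{a_k})_k\overset{C_0}{\sim}(y_{b_k})_k$.

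The crux --- and the step I expect to be the main obstacle --- is to promote this ``shifted'' equivalence to a bona fide common-index equivalence, thereby contradicting the second alternative and finishing the proof. The natural route is a second, chained appeal to Ramsey together with quasi-DH. Comparing the disjoint subsequences $(x_{a_k})_k$ and $(x_{b_k})_k$ of $(x_n)$ by quasi-DH gives $(x_{a_{k_i}})_i\overset{C_1}{\sim}(x_{b_{l_i}})_i$; if one can arrange $k_i=l_i$, then chaining with $(x_{a_{k_i}})_i\overset{C_0}{\sim}(y_{b_{k_i}})_i$ yields $(x_{b_{k_i}})_i\overset{C_0C_1}{\sim}(y_{b_{k_i}})_i$, a common-index (namely $\{b_{k_i}\}$) equivalence between subsequences of $(x_n)_{n\in N_0}$ and $(y_n)_{n\in N_0}$ --- exactly the contradiction sought. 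Forcing $k_i=l_i$ requires one further application of the infinite Ramsey theorem, now to the (again closed) coloring ``$(x_{a_j})_{j\in S}\sim(x_{b_j})_{j\in S}$?'' over infinite subsets $S$ of the index set: if this resolves on the positive side one gets $k_i=l_i$, while if it resolves on the negative side one is left with two subsequences of a single disjoint sequence having no common-index equivalent subsequence, a configuration that must be fed back into quasi-DH and iterated. Organizing this iteration so that it terminates is precisely what the Ramsey-theoretic stabilization of \cite{Schlumprecht} provides; carrying it out excludes the second alternative, and hence $E$ is DH.
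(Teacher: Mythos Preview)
Your outline has the right ingredients --- Galvin--Prikry plus quasi-DH --- but the argument is not closed. The final paragraph is the whole difficulty: you reduce to showing that two interlaced subsequences $(x_{a_k})$ and $(x_{b_k})$ of a \emph{single} disjoint sequence admit a common-index equivalence, and then you say that ``organizing this iteration so that it terminates'' is what Schlumprecht's stabilization provides. But that is exactly the step that needs to be written out, and as stated your recursion does not obviously terminate: each appeal to quasi-DH produces a new shifted equivalence, and you are back where you started. Your initial Ramsey pass on the pair $(x_n),(y_n)$ with the colorings $\mathcal A_C$ is not doing real work; the obstacle is entirely the promotion of a shifted equivalence to a common-index one.

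The paper's proof resolves this cleanly by applying Ramsey \emph{once} to a different coloring, on a single sequence. For the disjoint sequence $(x_n)$, color an infinite set $\{m_1<m_2<\cdots\}$ according to whether $(x_{m_{2k}})_k\sim(x_{m_{2k+1}})_k$. This is Borel, so Galvin--Prikry gives an infinite $\mathbb M$ on which the color is constant. Quasi-DH applied to $(x_{m_{2k}})$ and $(x_{m_{2k+1}})$ (with $m_k$ ranging over $\mathbb M$) produces equivalent subsequences which, after interlacing, witness the positive color; hence every infinite subset of $\mathbb M$ has equivalent even/odd halves. Now apply quasi-DH to $(x_n)_{n\in\mathbb M}$ and $(y_n)_{n\in\mathbb M}$ to get $(x_{n_k})\sim(y_{p_k})$, interlace so that $n_1<p_1<n_2<p_2<\cdots$ (or the reverse) lies in $\mathbb M$, and the stabilization gives $(x_{n_k})\sim(x_{p_k})$; chaining yields $(x_{p_k})\sim(y_{p_k})$. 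The point you were missing is that the Ramsey stabilization should be carried out on the even/odd split of one sequence \emph{before} bringing in the second sequence --- this kills the iteration in one stroke.
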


\begin{proof}
We prove that a quasi-DH Banach lattice $X$ is DH. Let $(x_n)$ be a disjoint sequence in $X$. For an infinite set $A$, by $\mathcal{P}_\infty(A)$ we denote the family of infinite subsets of $A$. We claim that  $\mathcal{P}_\infty(\mathbb{N})$ contains some set $\mathbb{M}=\{m_k:k\in\mathbb{N}\}$ with $m_1<m_2<\ldots$  such that for every infinite subset $\mathbb{P}=\{p_j:j\in\mathbb{N}\}\subset\mathbb{M}$, the equivalence $(x_{p_{2j}})\sim(x_{p_{2j+1}})$ holds.

Indeed, let
$$
\mathcal{S}=\Big\{\{m_k:k\in\mathbb{N}\}\in\mathcal{P}_\infty(\mathbb{N}): \,\forall k,\,m_k<m_{k+1},\,\textrm{and}\,(x_{m_{2k}})\sim(x_{m_{2k+1}})\Big\}.
$$
It is easy to check that $\mathcal{S}$ is a Borel subset of $\mathcal{P}_\infty(\mathbb{N})$. By the Galvin-Prikry Theorem (cf. \cite{Diestel}), there is $\mathbb{M}\in\mathcal{P}_\infty(\mathbb{N})$ such that either $\mathcal{P}_\infty(\mathbb{M})\subset\mathcal{S}$ or $\mathcal{P}_\infty(\mathbb{M})\cap\mathcal{S}=\emptyset$. Now, suppose that $\mathcal{P}_\infty(\mathbb{M})\cap\mathcal{S}=\emptyset$. Since $X$ is quasi-DH the disjoint sequences $(x_{m_{2k}})$ and $(x_{m_{2k+1}})$ have equivalent subsequences, that is $(j_k)$, $(l_k)$ such that
$$
(x_{m_{2j_k}})\sim(x_{m_{2l_k+1}}).
$$
Passing to further subsequences we have that either $2j_1<2l_1+1<2j_2<2l_2+1<\ldots$ or $2l_1+1<2j_1<2l_2+1<2j_2<\ldots$. In both cases we have that
$$
I=\{m_{2j_1},m_{2l_1+1},m_{2j_2},m_{2l_2+1},\ldots\}\in\mathcal{P}_\infty(\mathbb{M})\cap\mathcal{S}.
$$
This contradiction implies that $\mathcal{P}_\infty(\mathbb{M})\subset\mathcal{S}$, and the claim follows.

To finish the proof, let $(x_n)$ and $(y_n)$ be two sequences of normalized disjoint elements in $X$. By the claim, we can assume, passing to some subsequence, that both $(x_n)$ and $(y_n)$ run on $\mathbb{M}$ and also that  $(x_{m_{2k}})\sim(x_{m_{2k+1}})$ for every $(m_k)$ with $m_1<m_2<\ldots$ Since $X$ is quasi-DH, there exist $(n_k)$ and $(p_k)$ such that $(x_{n_k})\sim(y_{p_k})$. Passing to a further subsequence, we can assume that $n_1<p_1<n_2<p_2<\ldots$ or $p_1<n_1<p_2<n_2<\ldots$ By the properties of the sequence $(x_n)_{n\in\mathbb{M}}$,  it follows that
$$
(y_{p_k})\sim(x_{n_k})\sim(x_{p_k}).
$$
\end{proof}

\section{Duality for disjointly homogeneous Banach lattices}\label{stab-dual}

It is natural to inquire about the stability by duality of the class of DH Banach lattices. Note that by Proposition \ref{CarothersDilworth}, for $1<p<\infty$, the Lorentz space $L_{p,1}[0,1]$ is 1-DH. However, as mentioned above, its dual $L_{p',\infty}[0,1]$ is not DH  (here $\frac{1}{p}+\frac{1}{p'}=1$). Thus, in the non-reflexive case, the class of DH Banach lattices is not stable under duality.

By contrast, in the reflexive case, all the examples of DH Banach lattices mentioned above  have DH duals. This is obviously true for $L_p(\mu)$ spaces with  $1<p<\infty$ as well as for Lorentz spaces since Proposition \ref{CarothersDilworth}   can also  be applied to their duals. In addition, as shown in Theorem \ref{DH-Orlicz},  an Orlicz space $L_\varphi[0,1]$ is DH if and only if every function in the set $E_\varphi^\infty$ is equivalent to the function $t^p$ for some fixed $1\le p<\infty$. Note however that if  $\varphi'$ denotes the conjugate Orlicz function of $\varphi$, then every function in $E_{\varphi'}^\infty$ is easily seen to be equivalent to the function $t^{p'}$, which again is tantamount to the space $L_\varphi[0,1]^*=L_{\varphi'}[0,1]$ being DH.

Therefore, based on these examples, one might reasonably conjecture that among  reflexive spaces being DH is indeed a self dual property. As it will be shown this turns out to be false. Still it holds true under additional assumptions which are of interest. The rest of the section is devoted to clarifyig this.

A natural approach to proving  a positive result of stability by duality should look more or less like this: start with  two arbitrarily chosen  disjoint normalized sequences $(x_n)$ and $(y_n)$ in a reflexive Banach lattice $E$ whose dual $E^*$ is DH. We would like to prove that, up to passing to some subsequence, $(x_n)$ and $(y_n)$ are equivalent. Certainly,  two disjoint normalized sequences $(x_n^*)$ and $(y_n^*)$ in $E^*$ can be found in $E^*$ such that $x_n^*(x_m)=y_n^*(y_m)=\delta_{nm}$ for each $n,m\in\mathbb N$. Since $E^*$ is DH, after passing to subsequences we may assume that $(x_n^*)$ and $(y_n^*)$ are equivalent in $E^*$. On the other hand, for each $m$, we can consider $x_m^*$ as a functional on $[x_n]$ (formally speaking, we are taking the restriction of $x_m^*$ to $[x_n]$); moreover, since $E$ is reflexive, $(x_m^*)$ is a basis of $[x_n]^*$. Then for any coefficients $\alpha_1,\dots,\alpha_m$ we have
\begin{eqnarray*}
  \bignorm{\sum_{i=1}^m\alpha_ix_i}&=&\sup\Bigl\{\Bigabs{\bigl\langle \sum_{i=1}^m\alpha_ix_i,\sum_{i=1}^m\beta_ix_i^*\bigr\rangle}\mid
     \bignorm{\sum_{i=1}^m\beta_ix_i^*}_{[x_n]^*}\le 1\Bigr\}\\
    &=&\sup\Bigl\{\Bigabs{\sum_{i=1}^m\alpha_i\beta_i}\mid
     \bignorm{\sum_{i=1}^m\beta_ix_i^*}_{[x_n]^*}\le 1\Bigr\}.
\end{eqnarray*}
In general, clearly $\bignorm{\sum_{i=1}^m\beta_ix_i^*}_{[x_n]^*}\le\bignorm{\sum_{i=1}^m\beta_ix_i^*}_{E^*}$. However, if we could somehow control the converse estimate, we could continue, using the equivalence of $(x_n^*)$ and $(y_n^*)$ in $E^*$ as follows
\begin{eqnarray*}
  \bignorm{\sum_{i=1}^m\alpha_ix_i}&  \sim & \sup\Bigl\{\Bigabs{\sum_{i=1}^m\alpha_i\beta_i}\mid
     \bignorm{\sum_{i=1}^m\beta_ix_i^*}_{E^*}\le 1\Bigr\}\\
    &\sim&\sup\Bigl\{\Bigabs{\sum_{i=1}^m\alpha_i\beta_i}\mid
     \bignorm{\sum_{i=1}^m\beta_iy_i^*}_{E^*}\le 1\Bigr\}
    \sim\bignorm{\sum_{i=1}^m\alpha_iy_i},
\end{eqnarray*}
which would imply that $(x_n)$ and $(y_n)$ are equivalent. In particular, such an argument would work if we could find a bounded operator $S\colon[x_n]^*\to E^*$ such that $Sx_m^*=x_m^*$ for each $m$ and a similar operator for $(y_n)$. The previous discourse is collected in the following

\begin{definition}\label{Prop P}
A Banach lattice $E$ has the \term{ $\mathfrak{P} \ property$} if for every disjoint positive normalized sequence $(f_n)\subset E$ there exists an operator $T:E\rightarrow [f_n]$, such that some subsequence $(T^*f_{n_k}^*)$ is equivalent to a seminormalized disjoint sequence in $E^*$ (here $(f_n^*)$ denote the corresponding biorthogonal functionals in $[f_n]^*$).
\end{definition}

Given a disjoint sequence $(f_n)$ as in the above definition, we can consider $Px=\sum_{k=1}^\infty f_{n_k}^*(x)f_{n_k}$, the canonical projection from $[f_n]$ onto $[f_{n_k}]$ (which has $\norm{P}= 1$ because $(f_n)$ is 1-unconditional). If $E$ has the  $\mathfrak{P}$ property,  then we can now view
  \begin{displaymath}
    PTx
    =\sum_{k=1}^\infty f_{n_k}^*(Tx)f_{n_k}
    =\sum_{k=1}^\infty \bigl(T^*f_{n_k}^*\bigr)(x)f_{n_k}
  \end{displaymath}
as a bounded operator on $E$.

The $\mathfrak{P}$ property can be characterized as follows (\cite[Proposition 3.3]{FHSTT}):

\begin{proposition}
Let $E$ be a reflexive Banach lattice. The following are equivalent:
\begin{enumerate}
\item\label{distball} For every disjoint positive normalized sequence $(f_n)\subset E$ there exists a positive operator $T:E\rightarrow [f_n]$, with $\liminf_n\dist\bigl(f_n,T(B_E)\bigr)<1$.
\item\label{nonnull} For every disjoint positive normalized sequence $(f_n)\subset E$ there exists a positive operator $T:E\rightarrow [f_n]$, such that $\norm{T^*f_n^*}\nrightarrow0$.
\item\label{P} $E$ has the  $\mathfrak{P}$ property.
\end{enumerate}
\end{proposition}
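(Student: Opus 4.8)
The plan is to prove the two equivalences $(1)\Leftrightarrow(2)$ and $(2)\Leftrightarrow(3)$, i.e.\ to close the cycle $(1)\Rightarrow(2)\Rightarrow(3)\Rightarrow(2)\Rightarrow(1)$. Throughout I would use three elementary facts: a normalized disjoint sequence is $1$‑unconditional in its closed span, so $\norm{f_n^*}_{[f_n]^*}=1$, each $f_n^*$ is positive on $[f_n]$, and the band projections $P_Az=\sum_{n\in A}f_n^*(z)f_n$ are positive of norm one; and reflexivity of $E$ makes $E$ and $E^*$ order continuous with the weak and weak$^*$ topologies coinciding on $E^*$. Granting this, $(1)\Rightarrow(2)$ is immediate \emph{with the same operator}: along the subsequence for which $\dist(f_n,T(B_E))<1$, pick $x_n\in B_E$ with $\norm{f_n-Tx_n}<1$; then $\langle T^*f_n^*,x_n\rangle=f_n^*(Tx_n)=f_n^*(f_n)-f_n^*(f_n-Tx_n)\ge 1-\norm{f_n-Tx_n}>0$, so $\norm{T^*f_n^*}\nrightarrow0$.

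For $(2)\Rightarrow(3)$ I would pass to a subsequence with $\norm{T^*f_{n_k}^*}\ge\delta$ and examine $g_k:=T^*f_{n_k}^*$. These lie in $E^*_+$ (positivity of $T$ and of the coordinate functionals), satisfy $\norm{g_k}\le\norm{T}$, and are weakly null, since for fixed $x$ the scalars $\langle g_k,x\rangle=f_{n_k}^*(Tx)$ are coordinates of the norm‑convergent expansion of $Tx\in[f_n]$. A positive, seminormalized, weakly null sequence in the order continuous lattice $E^*$ has, after a further subsequence, a disjoint sequence $(v_j)\subset E^*_+$ with $\norm{g_{k_j}-v_j}\to0$; this $(v_j)$ is seminormalized, disjoint, hence automatically $1$‑unconditional basic, and a routine small‑perturbation argument upgrades this to $(g_{k_j})\sim(v_j)$. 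Thus $E$ has $\mathfrak P$, witnessed again by the positive $T$.

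The substantive step is recovering a \emph{positive} operator from $(3)$. Given $T$ with $(T^*f_{n_k}^*)\sim(v_k)$, disjoint and seminormalized in $E^*$, I would first replace $(v_k)$ by $(\abs{v_k})$ (isometrically equivalent, and positive) and consider the candidate positive map $Sx:=\sum_k\langle v_k,x\rangle f_{n_k}$. Its formal adjoint sends $g\in[f_n]^*$ to $\sum_k g(f_{n_k})v_k$, and since $[f_{n_k}]$ is $1$‑complemented in $[f_n]$ via $P_{\{n_k\}}$, the coefficient vectors $(g(f_{n_k}))_k$ exhaust the coefficient space of $[f_{n_k}]$; hence $S$ is bounded exactly when $\norm{\sum_k b_k v_k}_{E^*}\le C\norm{\sum_k b_k f_{n_k}^*}_{[f_n]^*}$ for all finitely supported $(b_k)$. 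By the equivalence $(v_k)\sim(T^*f_{n_k}^*)$ this is equivalent to the same inequality with $T^*f_{n_k}^*$ in place of $v_k$, which holds because $x\mapsto\sum_k f_{n_k}^*(Tx)f_{n_k}=P_{\{n_k\}}Tx$ has norm $\le\norm{T}$. So $S$ is a bounded positive operator with $S^*f_{n_k}^*=v_k$ and $\norm{S^*f_{n_k}^*}\nrightarrow0$, giving $(2)$.

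It remains to prove $(2)\Rightarrow(1)$. Reindexing, assume $\norm{T^*f_n^*}\ge\delta$ for all $n$ with $T\ge0$; choose $x_n\in B_E^+$ with $f_n^*(Tx_n)\ge\delta/2$, extract $x_n\rightharpoonup x_\infty\in B_E^+$, and put $u_n:=x_n-x_\infty$. Since $f_n^*(Tx_\infty)\to0$, eventually $f_n^*(Tu_n)\ge\delta/4$, and $(Tu_n)$ is weakly null and seminormalized in $[f_n]$; by Bessaga--Pe\l czy\'nski a subsequence $(Tu_{n_k})$ lies within summable $\varepsilon_k\to0$ of a block basis $(b_k)$ of $(f_n)$ over consecutive intervals $J_k$, and $f_{n_k}^*(b_k)\ge\delta/8$, so $n_k\in J_k$. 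As the $J_k$ are pairwise disjoint, $J_k\cap\{n_l:l\}=\{n_k\}$, so the norm‑one positive band projection $Q:=P_{\{n_l:\,l\ge l_0\}}$ collapses $b_k$ to its $f_{n_k}$‑component; choosing $l_0$ large also forces $\norm{QTx_\infty}$ below any prescribed $\epsilon_0$, whence $QTx_{n_k}=f_{n_k}^*(b_k)f_{n_k}+r_k$ with $\norm{r_k}<\epsilon_0+\varepsilon_k$. A fixed rescaling $S:=cQT$ is then a positive operator with $\dist(f_{n_k},S(B_E))\le\norm{f_{n_k}-S(\lambda_k x_{n_k})}<1$ for large $k$ (indeed arbitrarily small, for $l_0$ large). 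I expect this last construction to be the main obstacle: pinning $f_{n_k}$ down by a \emph{positive} operator while making the off‑diagonal residue of the block perturbants vanish under a single band projection, and simultaneously trimming the fixed part $Tx_\infty$, together with the duality transfer in $(3)\Rightarrow(2)$, carries all the non‑formal content; the remainder is routine manipulation with reflexivity, order continuity, and perturbation of basic sequences.
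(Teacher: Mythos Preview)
The paper is a survey and does not include a proof of this proposition; it merely cites \cite[Proposition~3.3]{FHSTT}. So there is no in-paper argument to compare against, and I can only assess your proof on its own terms.

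Your argument is correct. A few remarks on the places where you yourself flagged potential difficulty:

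In $(2)\Rightarrow(1)$, the crucial step is that the block perturbants $b_k$ of $Tu_{n_k}$ must contain the index $n_k$ in their support. Your post-hoc verification via $f_{n_k}^*(b_k)\ge\delta/4-\varepsilon_k>0$ is valid and does not require any special care in the Bessaga--Pe\l czy\'nski extraction; it is automatic once the perturbation errors $\varepsilon_k$ are taken below $\delta/8$. The subsequent observation that the intervals $J_k$ are pairwise disjoint, so $J_k\cap\{n_l\}_l=\{n_k\}$, is exactly what makes the single band projection $Q$ collapse each $b_k$ to a multiple of $f_{n_k}$. The choice $c=8/\delta$ then gives $\lambda_k=(c\alpha_k)^{-1}\le 1$ and $\dist(f_{n_k},S(B_E))\le(8/\delta)(\varepsilon_k+\epsilon_0)$, which can be made $<1$.

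In $(3)\Rightarrow(2)$, your boundedness argument for $S$ is clean: since $[f_{n_k}]$ is $1$-complemented in $[f_n]$, the functionals $f_{n_k}^*$ computed in either dual coincide, and the estimate $\bignorm{\sum_k b_k T^*f_{n_k}^*}_{E^*}\le\norm{T}\,\bignorm{\sum_k b_k f_{n_k}^*}_{[f_{n_k}]^*}$ is exactly the boundedness of $(P_{\{n_k\}}T)^*$. Replacing $v_k$ by $\abs{v_k}$ is legitimate because disjointness makes the two sequences isometrically equivalent.

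In $(2)\Rightarrow(3)$, the extraction of a disjoint positive $(v_j)$ from the positive weakly null $(g_k)$ in the order continuous lattice $E^*$ is standard (restrict to the band generated by the sequence, which has a weak unit, and apply Kadec--Pe\l czy\'nski; alternative (i) is ruled out because the weak unit, viewed in $E^{**}=E$, witnesses $\norm{g_k}_{L_1}\to 0$). The small-perturbation upgrade to equivalence is routine.

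The implication $(1)\Rightarrow(2)$ is immediate as you wrote it; for completeness, $\liminf<1$ gives a fixed $\alpha<1$ along a subsequence, and then $\norm{T^*f_n^*}\ge 1-\alpha$ there.
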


Notice that  Banach lattices in which every disjoint positive sequence has a subsequence whose span is complemented by a positive projection  satisfy the $\mathfrak{P}$ property. Examples of these include $L_p$ spaces, Lorentz function spaces $\Lambda(W,p)$, Tsirelson's space, etc.

As intended, the assumption of the $\mathfrak{P}$ property yields a partial positive answer  to the problem of stability by duality of DH Banach lattices.

\begin{theorem}\label{duality lemma P}
Let $E$ be a reflexive Banach lattice with the  $\mathfrak{P}$ property. If $E^*$ is DH, then $E$ is DH. Moreover, in the particular case when $E^*$ is $p$-DH, for some $1<p<\infty$, then $E$ is $q$-DH with $\frac1p+\frac1q=1$.
\end{theorem}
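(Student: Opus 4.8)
The plan is to turn the heuristic argument sketched just before Definition \ref{Prop P} into a rigorous proof, the only missing ingredient being a two-sided comparison between the norm of a block $\sum\beta_i x_i^*$ computed in $[x_n]^*$ and the same block computed in $E^*$. First I would start with two normalized positive disjoint sequences $(x_n)$, $(y_n)$ in $E$, fix biorthogonal functionals $(x_n^*)\subset E^*$, $(y_n^*)\subset E^*$ which can be taken positive and disjoint (since $E$ is a Banach lattice the biorthogonal functionals to a disjoint positive sequence can be chosen disjoint and positive, and by reflexivity they span $[x_n]^*$, $[y_n]^*$ respectively). Applying the $\mathfrak{P}$ property to $(x_n)$ and to $(y_n)$, and passing to a common subsequence, I get operators $T,S\colon E\to[x_n],[y_n]$ with $(T^*x_{n_k}^*)$, $(S^*y_{n_k}^*)$ equivalent to seminormalized disjoint sequences in $E^*$; relabel so the subsequence is all of $\mathbb N$. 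Since $E^*$ is DH, after a further subsequence $(T^*x_n^*)\overset{C}{\sim}(S^*y_n^*)$ in $E^*$, and moreover (again by DH applied to the disjoint sequences $(x_n^*)$ and $(T^*x_n^*)$, etc.) one may pass to a subsequence along which $(x_n^*)\sim(T^*x_n^*)$ and $(y_n^*)\sim(S^*y_n^*)$ — this is exactly the point where DH of $E^*$ lets us replace the "restricted" functionals by the original disjoint ones.

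The key computation is then as in the excerpt: by reflexivity, for scalars $(\alpha_i)$,
\begin{displaymath}
  \bignorm{\sum_i\alpha_i x_i}
  =\sup\Bigl\{\Bigabs{\sum_i\alpha_i\beta_i}\mid \bignorm{\sum_i\beta_i x_i^*}_{[x_n]^*}\le 1\Bigr\}.
\end{displaymath}
Via the operator $PT$ built from $T$ (as displayed after Definition \ref{Prop P}), whose adjoint sends $x_i^*\mapsto T^*x_i^*$, one controls $\bignorm{\sum_i\beta_i x_i^*}_{[x_n]^*}$ from below by a constant multiple of $\bignorm{\sum_i\beta_i T^*x_i^*}_{E^*}$; combined with the trivial estimate $\bignorm{\sum_i\beta_i x_i^*}_{[x_n]^*}\le\bignorm{\sum_i\beta_i x_i^*}_{E^*}$ and the equivalence $(x_i^*)\sim(T^*x_i^*)$ in $E^*$, this yields $\bignorm{\sum_i\beta_i x_i^*}_{[x_n]^*}\sim\bignorm{\sum_i\beta_i x_i^*}_{E^*}$ uniformly in $(\beta_i)$. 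Doing the same for $(y_i)$ and invoking $(x_i^*)\sim(y_i^*)$ in $E^*$, the chain of $\sim$'s in the excerpt goes through and gives $(x_n)\sim(y_n)$, so $E$ is DH. For the quantitative "Moreover" part: if $E^*$ is $p$-DH, then every normalized disjoint sequence in $E^*$, in particular $(x_n^*)$, has a subsequence $C$-equivalent to the $\ell_p$-basis; running the above with $(y_i)$ replaced by the $\ell_q$-basis computation shows $\bignorm{\sum_i\alpha_i x_i}\sim\sup\{|\sum\alpha_i\beta_i|:\|(\beta_i)\|_p\le1\}=\|(\alpha_i)\|_q$, so $(x_n)$ has a subsequence equivalent to the $\ell_q$-basis; hence $E$ is $q$-DH.

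The main obstacle — and the only place where more than bookkeeping is needed — is justifying that one may simultaneously arrange, along a single subsequence, all three equivalences $(x_n^*)\sim(T^*x_n^*)$, $(y_n^*)\sim(S^*y_n^*)$, and $(T^*x_n^*)\sim(S^*y_n^*)$ in $E^*$, and that the resulting constants are uniform so that the $\sup$ over the (non-compact) set of admissible $(\beta_i)$ really does transfer. The first is handled by iterated use of the DH property of $E^*$ (each equivalence only costs a subsequence and the DH constant), and uniformity is automatic because the equivalence constants $C$ in $(x_{n_k})\overset{C}{\sim}(y_{n_k})$ are, by definition, independent of the scalars. One must also check the elementary lattice facts used at the outset — that a positive disjoint sequence admits positive disjoint biorthogonal functionals in $E^*$, and that $PT$ is a genuine bounded operator on $E$ — both of which are standard (the latter is noted explicitly in the excerpt) and require no new idea.
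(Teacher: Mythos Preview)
Your proposal is correct and follows precisely the approach sketched in the paper's heuristic preceding Definition~\ref{Prop P} (the paper itself defers the full proof to \cite{FHSTT}). The only point the heuristic left open was the two-sided comparison $\bignorm{\sum\beta_i x_i^*}_{[x_n]^*}\sim\bignorm{\sum\beta_i x_i^*}_{E^*}$, and you close it exactly as intended: the adjoint $T^*\colon[x_n]^*\to E^*$ gives $\bignorm{\sum\beta_i T^*x_i^*}_{E^*}\le\|T\|\,\bignorm{\sum\beta_i x_i^*}_{[x_n]^*}$, and then DH of $E^*$ --- applied to the disjoint sequence $(x_i^*)$ and the disjoint sequence to which $(T^*x_i^*)$ is equivalent via the $\mathfrak{P}$ property --- yields $(x_i^*)\sim(T^*x_i^*)$ in $E^*$ along a subsequence, which is the missing ingredient.
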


This fact, which was given in \cite{FHSTT}, can be used in particular to show that if a reflexive Banach lattice $E$ is $p$-DH and satisfies a lower $p$-estimate, for some $1<p<\infty$, then $E^*$ is $q$-DH (with $\frac1p+\frac1q=1$).

We focus now our attention on some examples of DH Banach lattices with non-DH duals. The existence of these examples shows that the $\mathfrak{P}$ property cannot be removed from Theorem \ref{duality lemma P}.

\begin{theorem}\label{OrlicznoDH}
Let  $1<p<\infty$ and  $\varphi$ an Orlicz function such that  $\varphi(t)\simeq t^{p}$ on $[0,1]$ and $\varphi(t)\simeq t^{p}\log(1+t)$ on $[1,\infty)$. Then the Orlicz space $L_{\varphi}(0,\infty)$ is a reflexive $p$-DH Banach lattice whose dual is not  DH.
\end{theorem}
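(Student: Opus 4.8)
The plan is to verify each of the three asserted properties of $L_\varphi(0,\infty)$ separately, using the characterization of DH Orlicz spaces on infinite measure spaces from Theorem \ref{DH-Orlicz}(ii) together with the Orlicz-space facts recalled right after it. First, I would establish \emph{reflexivity}. The Orlicz function $\varphi$ satisfies $\varphi(t) \simeq t^p$ near $0$ and $\varphi(t) \simeq t^p \log(1+t)$ near $\infty$; since $p > 1$, both $\varphi$ and its conjugate $\varphi'$ satisfy the $\Delta_2$-condition at $0$ and at $\infty$ (the logarithmic factor is harmless: $t^p\log(1+t)$ is $\Delta_2$ because $(2t)^p\log(1+2t) \le 2^{p+1} t^p \log(1+t)$ for large $t$, and similarly $\varphi'(t)\simeq t^{p'}/(\log(1+t))^{p'-1}$ is $\Delta_2$). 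For an Orlicz space over a measure space that has both atoms of bounded measure structure and sets of arbitrarily large measure — i.e.\ over $(0,\infty)$ — reflexivity is equivalent to $\varphi,\varphi' \in \Delta_2$ globally (at both $0$ and $\infty$); see \cite{LT3}. Hence $L_\varphi(0,\infty)$ is reflexive.

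Second, I would check that $L_\varphi(0,\infty)$ is \emph{$p$-DH}. By Theorem \ref{DH-Orlicz}(ii) it suffices to show that $C_\varphi(0,\infty) \cong \{t^p\}$, i.e.\ that every function in $\overline{\mathrm{conv}}\,E_\varphi(0,\infty)$ is equivalent to $t^p$ at $0$. The functions generating $E_\varphi(0,\infty)$ are the $t \mapsto \varphi(st)/\varphi(s)$ for $s \in (0,\infty)$. For $s$ bounded (say $s \le s_0$) one has $\varphi(st)/\varphi(s) \simeq (st)^p / s^p = t^p$ uniformly on $[0,1]$, using $\varphi(t)\simeq t^p$ on $[0,1]$ and controlling the transition region. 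For $s \to \infty$, $\varphi(st)/\varphi(s) = (st)^p\log(1+st) / (s^p \log(1+s)) \cdot (1+o(1)) = t^p \cdot \log(1+st)/\log(1+s) \cdot (1+o(1))$, and since $\log(1+st)/\log(1+s) \to 1$ as $s\to\infty$ for each fixed $t\in(0,1]$ (uniformly on compacts of $(0,1]$), every cluster point of these functions in $C[0,1]$ is a multiple of $t^p$, hence equivalent to $t^p$ at $0$. Taking closed convex hulls preserves this, so $C_\varphi(0,\infty)\cong\{t^p\}$ and the space is $p$-DH.

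Third, and this is where the real content lies, I would show that the \emph{dual $L_\varphi(0,\infty)^* = L_{\varphi'}(0,\infty)$ is not DH}. Here $\varphi'(t) \simeq t^{p'}$ on $[1,\infty)$ (conjugate of $t^p$ near $0$) and $\varphi'(t) \simeq t^{p'}/(\log(1+\tfrac1t))^{p'-1}$ — more precisely the conjugate of $t^p\log(1+t)$ — on $[0,1]$. The point is that the asymmetry between the behavior of $\varphi'$ near $0$ and near $\infty$ produces, via the Nielsen description recalled after Theorem \ref{DH-Orlicz}, normalized disjoint sequences spanning $\ell_F$ for genuinely different $F$'s: disjoint sequences supported on sets of small measure (where the relevant dilations $\varphi'(st)/\varphi'(s)$ come from large $s$, governed by the $[1,\infty)$-behavior $t^{p'}$) are $\ell_{p'}$-like, whereas disjoint sequences supported on sets of large measure (small $s$, governed by the $[0,1]$-behavior involving the logarithm) generate a copy of an Orlicz sequence space $\ell_G$ with $G$ not equivalent to $t^{p'}$. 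Concretely one checks that $C_{\varphi'}(0,\infty)$ contains both $t^{p'}$ and a function equivalent to $t^{p'}/(\log(1+\tfrac1t))^{p'-1}$ (up to normalization), which are \emph{not} equivalent at $0$, so $C_{\varphi'}(0,\infty) \not\cong \{t^{q}\}$ for any single $q$; by Theorem \ref{DH-Orlicz}(ii), $L_{\varphi'}(0,\infty)$ is not DH. Alternatively, and perhaps cleanly, one exhibits explicitly one disjoint normalized sequence in $L_{\varphi'}(0,\infty)$ equivalent to the $\ell_{p'}$-basis and another equivalent to the $\ell_G$-basis with $G \not\simeq t^{p'}$, contradicting the DH property directly. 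The main obstacle is this last step: one must carefully compute the conjugate function's asymptotics at $0$ and $\infty$ and then either identify $C_{\varphi'}(0,\infty)$ precisely enough to see it is not a singleton up to equivalence, or construct the two inequivalent disjoint sequences by hand (choosing supports of the right measures and invoking the Lindenstrauss–Tzafriri-type representation of disjoint sequences in Orlicz spaces). Everything else — reflexivity and the $p$-DH computation — is routine $\Delta_2$-bookkeeping and dilation estimates.
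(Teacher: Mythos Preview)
Your computation of the conjugate in step 3 is backwards. Young conjugation sends small arguments to small arguments and large to large (the optimizing $s$ in $\varphi^*(u)=\sup_s(su-\varphi(s))$ satisfies $\varphi'(s)=u$, so $u\to 0$ forces $s\to 0$). Hence the correct asymptotics are
\[
\varphi^*(t)\simeq t^{q}\ \text{on }[0,1],\qquad \varphi^*(t)\simeq \frac{t^{q}}{(\log t)^{\,q-1}}\ \text{on }[1,\infty),
\]
with $q=p'$, exactly the opposite of what you wrote.

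With these corrected asymptotics your proposed mechanism collapses: for every fixed $s$ the dilation $F_s(t)=\varphi^*(st)/\varphi^*(s)$ behaves like $t^{q}$ near $0$ (since $\varphi^*\simeq t^q$ near $0$), and the limits as $s\to 0$ and $s\to\infty$ are both $t^{q}$. So every function in $E_{\varphi^*}(0,\infty)$ is individually equivalent to $t^{q}$ at $0$; there is no pair of inequivalent functions sitting in $E_{\varphi^*}$ to point at. What actually happens is subtler: the equivalence constants of the $F_s$ to $t^q$ blow up along $s\to\infty$ (for $t\sim 1/s$ one has $F_s(t)\simeq t^{q}(\log s)^{q-1}$), and this non-uniformity is exactly what allows the \emph{convex hull} $C_{\varphi^*}(0,\infty)$ to contain functions not equivalent to $t^{q}$. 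The paper's argument uses Nielsen's representation of $C_{\varphi^*}(0,\infty)$ (\cite[p.~242]{Nielsen}) to show that $\psi_\alpha(t)=t^{q}\lvert\log t\rvert^{\alpha}$ lies in $C_{\varphi^*}(0,\infty)$ for every $\alpha\in(0,\min\{1,q-1\})$; these are pairwise inequivalent, so Theorem~\ref{DH-Orlicz}(ii) gives that the dual is not DH. Your sketch misses this convex-hull step entirely, and it is the heart of the proof.

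The same subtlety infects your step~2: the sentence ``Taking closed convex hulls preserves this'' is exactly the nontrivial point. You have shown that each $F_s\in E_\varphi(0,\infty)$ satisfies $F_s(t)\le Ct^p$ uniformly and is equivalent to $t^p$, but the lower bound is \emph{not} uniform (for large $s$ and $t\sim 1/s$ one gets $F_s(t)\simeq t^p/\log s$), so passing to $\overline{\mathrm{conv}}$ is not automatic. That the convex hull nevertheless stays inside $\{t^p\}$ up to equivalence again requires an argument of Nielsen type; it does not follow from the pointwise estimates you give.
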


The proof of the fact that  $L_{\varphi}(0,\infty)^*$ is not DH is based on a representation of functions in the set $C_{\varphi}(0,\infty)$ given in \cite[p. 242]{Nielsen} and Theorem \ref{DH-Orlicz}. In particular, one can see that this dual Orlicz space contains sublattices isomorphic to the Orlicz sequence space $\ell_{\psi_\alpha}$, for $\psi_\alpha(t)=t^q\abs{\log t}^{\alpha}$, where $\frac1p+\frac1q=1$ and every $\alpha\in(0, \min\{1,q-1\})$.

This example can be used  to construct another  one within the category of \emph{atomic} reflexive $p$-DH Banach lattices, more precisely, a weighted Orlicz sequence spaces.

Recall that given a sequence of positive numbers $w=(w_{n})$  and  an Orlicz function $\varphi$, the \emph{weighted Orlicz sequence space} $\ell_{\varphi}(w)$  is the space of all sequences  $(x_{n})$ such that
\begin{math}
  \sum_{n=1}^{\infty}  \varphi(\frac{\abs{x_n}}{s}) w_n  <  \infty
\end{math}
for some  $s>0$, endowed with the  Luxemburg norm. Notice that the unit vectors form an unconditional basis of $\ell_{\varphi}(w)$ when $\varphi$ satisfies the  $\Delta_{2}$-condition.

\begin{theorem}\label{discrete-dual-non-DH}
Let $w=(w_n)$ be a sequence of positive numbers such that there is  a
subsequence  $(w_{n_k})$ with $w_{n_k}\rightarrow 0$  and  $\sum_{k=1}^{\infty}w_{n_k}=\infty$.
If  $\varphi$ is an Orlicz function as in the previous theorem then the weighted Orlicz sequence space  $\ell_{\varphi}(w)$ is  $p$-DH but its dual is not DH.
\end{theorem}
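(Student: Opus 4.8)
The plan is to reduce both halves of the statement to Theorem~\ref{OrlicznoDH} by means of a single concrete lattice embedding of $\ell_\varphi(w)$ into $L_\varphi(0,\infty)$. Since the distinguished subsequence already satisfies $\sum_k w_{n_k}=\infty$, also $\sum_n w_n=\infty$, so $(0,\infty)$ can be partitioned into intervals $I_n$ with $\abs{I_n}=w_n$. A direct computation with the Luxemburg norm shows that, for scalars $(a_n)$, $\bignorm{\sum_n a_n\mathbf 1_{I_n}}_{L_\varphi(0,\infty)}=\inf\{r>0:\sum_n\varphi(\abs{a_n}/r)w_n\le1\}$, which is precisely the $\ell_\varphi(w)$-norm of $(a_n)$, while $\bigabs{\sum_n a_n\mathbf 1_{I_n}}=\sum_n\abs{a_n}\mathbf 1_{I_n}$; hence $e_n\mapsto\mathbf 1_{I_n}$ extends to a lattice isometry of $\ell_\varphi(w)$ onto the closed sublattice $F=[\mathbf 1_{I_n}]$ of $L_\varphi(0,\infty)$. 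A normalized disjoint sequence in $F$ is also such in $L_\varphi(0,\infty)$ and norms of linear combinations coincide, so the $p$-DH property of $L_\varphi(0,\infty)$ (Theorem~\ref{OrlicznoDH}) passes to $F\cong\ell_\varphi(w)$; this gives the first assertion.

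For the dual, set $q=p'$. A computation with the complementary Orlicz function gives $\varphi^*(t)\simeq t^{q}$ on $[0,1]$ and $\varphi^*(t)\simeq t^{q}/(\log(1+t))^{q-1}$ on $[1,\infty)$, so $\varphi,\varphi^*\in\Delta_2$ and $\ell_\varphi(w)^*=\ell_{\varphi^*}(w)$ with an equivalent norm and the unit vectors an unconditional basis. The sequences supported on $\{n_k\}$ form a closed sublattice of $\ell_{\varphi^*}(w)$ lattice-isometric to $\ell_{\varphi^*}(w')$ with $w'_k=w_{n_k}\to0$ and $\sum_k w'_k=\infty$, and since being DH passes to closed sublattices (a normalized disjoint sequence in a sublattice is such in the whole space), it suffices to show $\ell_{\varphi^*}(w')$ is not DH; that is, we may assume $w_n\to0$ and $\sum_n w_n=\infty$.

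Here is the crux. Note first that, since $\varphi^*(t)\simeq t^{q}$ near $0$, the \emph{unweighted} space $\ell_{\varphi^*}$ is simply $\ell_q$, which is DH; so the failure of DH must come from the behaviour of $\varphi^*$ at $\infty$, and it is exactly the hypothesis $w_n\to0$ that brings it into play. Viewing $\ell_{\varphi^*}(w)$ as the sublattice $[\mathbf 1_{I_n}]$ of $L_{\varphi^*}(0,\infty)=L_\varphi(0,\infty)^*$ with $\abs{I_n}=w_n\to0$ and $\sum_n\abs{I_n}=\infty$, one shows that this discrete space still contains a lattice copy of $\ell_F$ for every $F\in C_{\varphi^*}(0,\infty)$, by carrying out the fundamental-function construction of Nielsen (\cite[p.~242]{Nielsen}, as in the proof of Theorem~\ref{OrlicznoDH}) with functions constant on each $I_n$. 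In particular one obtains, for a suitable $\alpha\in(0,\min\{1,q-1\})$, a normalized disjoint block sequence $(g_k)$ in $\ell_{\varphi^*}(w)$ equivalent to the unit vector basis of the Orlicz sequence space $\ell_{\psi_\alpha}$ with $\psi_\alpha(t)=t^{q}\abs{\log t}^{\alpha}$ — precisely the phenomenon underlying the failure of DH for $L_\varphi(0,\infty)^*$ recorded after Theorem~\ref{OrlicznoDH}. Since $\psi_\alpha\not\simeq t^{r}$ at $0$ for any $r$, the space $\ell_{\psi_\alpha}$ is not isomorphic to any $\ell_r$, and being stable (\cite{Krivine-Maurey}) with a symmetric basis it is not DH, by the argument recalled in Section~\ref{definitions and examples}. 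As $[g_k]$ is a closed sublattice of $\ell_{\varphi^*}(w)$ isomorphic to this non-DH space, $\ell_\varphi(w)^*=\ell_{\varphi^*}(w)$ is not DH.

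The only nontrivial point is the claim in the preceding paragraph that Nielsen's continuous construction can be made to live inside the prescribed $\{I_n\}$-sublattice — equivalently, that $\ell_{\varphi^*}(w)$ with $w_n\to0$ and $\sum_n w_n=\infty$ realizes, as spans of disjoint block sequences, the same Orlicz sequence spaces $\ell_F$, $F\in C_{\varphi^*}(0,\infty)$, as $L_{\varphi^*}(0,\infty)$; the remaining steps are routine.
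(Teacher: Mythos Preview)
Your reduction of the $p$-DH assertion to Theorem~\ref{OrlicznoDH} via the lattice isometry $e_n\mapsto\mathbf 1_{I_n}$ into $L_\varphi(0,\infty)$ is correct and is essentially the natural approach; the inheritance of $p$-DH by closed sublattices is immediate, as you note.

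For the dual, your route diverges from the paper's. The paper does not attempt to discretize Nielsen's construction directly; instead it invokes two ready-made results: an \emph{identification theorem} for weighted Orlicz sequence spaces from \cite{Fuentes-H}, which characterizes exactly which Orlicz sequence spaces $\ell_F$ arise as spans of normalized disjoint blocks in $\ell_\psi(w)$, and a \emph{universal property} due to \cite{Drewnowski}, which under the hypotheses $w_{n_k}\to 0$, $\sum_k w_{n_k}=\infty$ guarantees that this collection of $F$'s coincides with $C_\psi(0,\infty)$. Once these are in hand, both conclusions follow immediately from the analysis already done for $L_\varphi(0,\infty)$ and its dual in Theorem~\ref{OrlicznoDH}: on the $\varphi$ side $C_\varphi(0,\infty)\cong\{t^p\}$ forces $p$-DH, while on the $\varphi^*$ side the presence of the $\psi_\alpha$'s in $C_{\varphi^*}(0,\infty)$ yields non-DH sublattices.

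What you call ``the only nontrivial point'' --- that Nielsen's construction can be carried out with functions constant on each $I_n$ --- is therefore exactly the content of the Fuentes--Hern\'andez/Drewnowski combination, and you have not proved it. It is not routine: one must show that the restriction to step functions with prescribed step-widths $w_n$ does not shrink the set of realizable $F$'s, and this is where the hypotheses $w_{n_k}\to 0$ and $\sum_k w_{n_k}=\infty$ are genuinely used (the first to access the large-argument behaviour of $\varphi^*$, the second to build blocks of arbitrary total mass). Your proposal is thus a correct outline with the key lemma outsourced to an assertion; the paper's version outsources the same lemma to the literature. Either cite \cite{Fuentes-H} and \cite{Drewnowski} at this step, or supply the discretization argument in full.
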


The proof is based on the space constructed in Theorem \ref{OrlicznoDH} together with an identification theorem for weighted Orlicz sequence spaces \cite{Fuentes-H} and a universal property of these spaces due to \cite{Drewnowski}.

It should be noted that this  kind of examples cannot be adapted to  Orlicz spaces over a probability space (see Theorem \ref{DH-Orlicz} and the comments at begining of this Section). But more generally one might wonder whether a reflexive $p$-DH rearrangement invariant function space (\cite[Chapter 2]{LT2}) on the interval $[0,1]$ whose dual is not DH may exist.

\section{Complemented disjoint sequences}\label{DC}

It was mentioned earlier that Banach lattices in which every positive disjoint sequence has some  subsequence whose span is complemented by a positive projection necessarily satisfy the  $\mathfrak{P}$ property. We take now a closer look at this situation. We will say that a sequence $(x_n)$ is said to be complemented in $E$ if there is a projection $P$ on $E$ with $\Range P=[x_n]$.

Notice that given a positive projection $P$ onto the span of a disjoint sequence $(x_n)\subset E$, if $(x_n^*)$ denote the biorthogonal functionals, then the sequence $(P^*x_n^*)$ need not be disjoint in $E^*$:

\begin{example}
Take
$E=\mathbb R^3$ and let
$$
  x_1=\left[
    \begin{smallmatrix}
      1 \\ 0 \\ 0
    \end{smallmatrix}
    \right],
\hspace{5mm}
  x_2=\left[
    \begin{smallmatrix}
      0 \\ 1 \\ 0
    \end{smallmatrix}
    \right],
\hspace{5mm} \textrm{ and } \hspace{5mm}
  P=\left[
    \begin{smallmatrix}
      1 & 0 & 1 \\
      0 & 1 & 1 \\
      0 & 0 & 0
    \end{smallmatrix}
    \right].
$$
Note that $Pe_1=x_1$, $Pe_2=x_2$, and $Pe_3=x_1+x_2$.
It follows from
\begin{math}
  (P^*x_n^*)_i=\langle P^*x_n^*,e_i\rangle=\langle x^*_n,Pe_i\rangle
\end{math}
that
$$
P^*x_1^*=\left[
    \begin{smallmatrix}
      1 \\ 0 \\ 1
    \end{smallmatrix}
    \right]
\hspace{5mm} \textrm{ and } \hspace{5mm}
  P^*x_2^*=\left[
    \begin{smallmatrix}
      0 \\ 1 \\ 1
    \end{smallmatrix}
    \right],
$$
so that $ P^*x_1^*$ and $ P^*x_2^*$ are not disjoint.
\end{example}

Interestingly enough, the following result proved in \cite{FHSTT} shows that  if a disjoint positive sequence spans a complemented subspace, then a positive projection whose adjoint sends the biorthogonal functionals to a disjoint sequence can be found.

\begin{proposition}\label{positive projection}
 Let $E$ be a reflexive Banach lattice, $(f_n)$ a positive disjoint sequence, and $R\in \mathcal{L}(E)$ a projection onto $[f_n]$. Then there exists a positive disjoint sequence $(g_n^*)$ in $E^*$ with $\langle g_n^*,f_m\rangle=\delta_{n,m}$ such that the operator
\begin{math}
  Px=\sum_{n=1}^\infty g_n^*(x)f_n
\end{math}
defines a positive projection onto $[f_n]$ with $\norm{P}\le\norm{R}$.
\end{proposition}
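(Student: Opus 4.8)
The plan is to start from the given projection $R$ onto $[f_n]$ and extract from it, via a lattice-theoretic "regularization," a \emph{positive} projection of no larger norm whose defining functionals are pairwise disjoint. Write $R x = \sum_{n} h_n^*(x) f_n$, where $h_n^* = R^* f_n^* \in E^*$ are the (not necessarily disjoint, not necessarily positive) coefficient functionals; they satisfy $\langle h_n^*, f_m\rangle = \delta_{n,m}$ since $R$ is a projection onto $[f_n]$ and $(f_n)$ is a basis of its span with biorthogonal functionals $(f_n^*)$. The disjointness of the $f_n$ is the structural feature I want to exploit: for each $n$ let $B_n = \{\, |f_n| > 0 \,\}$ be the carrier (band support) of $f_n$, so the $B_n$ are pairwise disjoint bands. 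The natural candidate is to replace $h_n^*$ by its "restriction to $B_n$," i.e. $g_n^* := P_{B_n}^* h_n^*$ where $P_{B_n}$ is the band projection onto the band generated by $f_n$; because $E$ is reflexive (hence order continuous, hence every band is a projection band) these band projections exist, are positive, contractive, and mutually orthogonal in the sense that $P_{B_n} P_{B_m} = 0$ for $n\ne m$.

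First I would check that $(g_n^*)$ is biorthogonal to $(f_n)$: since $f_m$ is supported in $B_m$, we have $P_{B_n} f_m = \delta_{n,m} f_m$, so $\langle g_n^*, f_m\rangle = \langle h_n^*, P_{B_n} f_m\rangle = \delta_{n,m}\langle h_n^*, f_m\rangle = \delta_{n,m}$. Next, the operator $Q x := \sum_n g_n^*(x) f_n$ agrees with $R$ on $[f_n]$ (by biorthogonality), so it is again a projection onto $[f_n]$, and I would like $\norm{Q}\le\norm{R}$. The cleanest way to see this is to observe that $Q = R \circ (\sum_n P_{B_n})$ up to the identification on the range: more precisely, writing $\Pi = \sum_n P_{B_n}$ (a contractive positive band projection, the projection onto the band generated by $\bigcup_n B_n$), one has $g_n^* = R^* f_n^* \circ \Pi$ restricted appropriately, hence $Q = R\Pi$, giving $\norm{Q}\le \norm{R}\,\norm{\Pi}\le\norm{R}$. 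Then disjointness of $(g_n^*)$ in $E^*$ follows because $g_n^*$ is supported in the band $B_n$ (it annihilates everything in the complementary bands), and the $B_n$ are pairwise disjoint; the supports of disjoint bands give disjoint functionals in the dual lattice.

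The step that requires genuine care, and which I expect to be the main obstacle, is getting $(g_n^*)$ to be \emph{positive} rather than merely disjoint, while simultaneously keeping $Q$ a projection and not increasing the norm. The band-restriction trick above disjointifies but does not produce positivity: $g_n^*$ inherits the sign defects of $h_n^*$. Here I would invoke the reflexivity and the order structure more heavily — one route is to use the lattice operation $g_n^* \mapsto (g_n^*)^+$, but that destroys biorthogonality against $f_m$ for $m\ne n$ unless one has already disjointified, and even then one must verify $\langle (g_n^*)^+, f_n\rangle = 1$, which uses $f_n\ge 0$ and $\langle g_n^*,f_n\rangle = 1$ together with the fact that on the band $B_n$ the functional $g_n^*$ is, after a further normalization, dominated below in the right way; this is where I would lean on an averaging/convexity argument over the positive cone, or alternatively on the characterization in the preceding Proposition of the $\mathfrak{P}$ property, to replace $T$ by a positive operator with the same relevant behaviour. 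Concretely, I would first disjointify to get $(g_n^*)$ disjoint with $Q=R\Pi$ a projection of norm $\le\norm{R}$, then set $\tilde g_n^* = (g_n^*)^+$ and $\tilde Q x = \sum_n \tilde g_n^*(x) f_n$, and check that (i) $\langle \tilde g_n^*, f_m\rangle = \delta_{n,m}$ — here disjointness of the $g_m^*$ is exactly what makes $\langle (g_n^*)^+, f_m\rangle = 0$ for $m\ne n$ trivial, since $f_m\ge 0$ is supported off the support of $g_n^*$, and $\langle (g_n^*)^+, f_n\rangle \ge \langle g_n^*, f_n\rangle = 1$ with the reverse inequality coming from $\norm{\tilde Q}$ control — and (ii) $\norm{\tilde Q}\le \norm{Q}\le\norm{R}$, which should follow from the lattice estimate $\bignorm{\sum_n \tilde g_n^*(x) f_n} = \bignorm{\sum_n (g_n^*)^+(x) f_n} \le \bignorm{\sum_n g_n^*(|x|) f_n} = \bignorm{Q|x|}\le\norm{Q}\,\norm{x}$, using $f_n\ge 0$, the disjointness of the $f_n$, and $1$-unconditionality of the disjoint basis. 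Taking $\tilde Q$ and $(\tilde g_n^*)$ as the $P$ and $(g_n^*)$ in the statement completes the argument; I would double-check the edge case where $(g_n^*)^+(f_n)$ could exceed $1$, reconciling it with biorthogonality by the norm bound, which forces equality.
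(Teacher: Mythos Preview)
The skeleton is right but two steps fail. First, $Q=R\Pi$ is false: you have $Qx=\sum_n h_n^*(P_{B_n}x)f_n$ while $R\Pi x=\sum_n\bigl(\sum_m h_n^*(P_{B_m}x)\bigr)f_n$, and the cross terms $h_n^*(P_{B_m}x)$ for $m\ne n$ need not vanish (take $E=\ell_2^4$, $f_1=e_1+e_2$, $f_2=e_3+e_4$, $h_1^*=e_1^*+e_3^*-e_4^*$). The bound $\norm{Q}\le\norm{R}$ does hold, but by averaging rather than composition: $U_\epsilon=\sum_n\epsilon_nP_{B_n}+(I-\Pi)$ is an isometry for each choice of signs $\epsilon$, and $Q=\Ave_\epsilon\,U_\epsilon R U_\epsilon$.

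Second, and more seriously, the positivity step breaks. You claim the norm bound forces $\langle(g_n^*)^+,f_n\rangle\le 1$, but it does not: with $E=L_2[0,1]$, $f_1=\chi_{[0,1]}$, $h_1^*=2(\chi_{[0,1]}-2\chi_{[0,1/4]})$ one has $\langle h_1^*,f_1\rangle=1$ yet $\langle(h_1^*)^+,f_1\rangle=3/2$, while still $\norm{\tilde Q}=\sqrt 3<2=\norm{R}$. Your estimate $\bignorm{\sum_n(g_n^*)^+(x)f_n}\le\norm{Q\abs{x}}$ also fails, since for $x\ge 0$ one has $(g_n^*)^+(x)\ge g_n^*(x)$, not $\le$. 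The repair is to \emph{normalize}: set $\hat g_n^*=(g_n^*)^+/\langle(g_n^*)^+,f_n\rangle$, which is positive, disjoint, and biorthogonal to $(f_m)$. Reflexivity ensures the band projection in $E^*$ onto the carrier of $(g_n^*)^+$ is $P_{C_n}^*$ for some band $C_n\subset B_n$ in $E$, so that $(g_n^*)^+=h_n^*\circ P_{C_n}$; the same averaging with $C_n$ in place of $B_n$ gives $\bignorm{\sum_n(g_n^*)^+(\cdot)f_n}\le\norm{R}$, and then $0\le\hat g_n^*\le(g_n^*)^+$ together with the positivity of the $f_n$ yields $\norm{\hat P}\le\norm{R}$.
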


This fact gains relevance in connection with the following problem: \emph{Does every reflexive Banach lattice contain a complemented positive disjoint sequence?}

We don't know the answer to this question. However, the following result, which is derived from Proposition \ref{positive projection}, provides a useful reformulation.

\begin{corollary}\label{star-property}
  Given a positive disjoint sequence $(e_n)$ in a reflexive Banach lattice $E$, the following are equivalent:
   \begin{enumerate}
   \item The subspace $[e_n]$ is complemented in $E$.
   \item There exists a disjoint positive sequence $(e_n^*)$ in $E^*$ with $\langle e_n^*,e_m\rangle=\delta_{mn}$ such that $\sum_{n=1}^\infty e_n^*(x)e_n$ converges for each $x\in E$.
   \end{enumerate}
\end{corollary}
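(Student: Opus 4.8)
The plan is to prove the two implications separately, obtaining $(1)\Rightarrow(2)$ as an essentially immediate consequence of Proposition \ref{positive projection} and $(2)\Rightarrow(1)$ by a direct uniform boundedness argument.

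For $(1)\Rightarrow(2)$, assume $[e_n]$ is complemented, so that there is a projection $R\in\mathcal{L}(E)$ onto $[e_n]$. Applying Proposition \ref{positive projection} with $f_n:=e_n$ produces a positive disjoint sequence $(g_n^*)\subset E^*$ with $\langle g_n^*,e_m\rangle=\delta_{nm}$ for which $Px:=\sum_{n=1}^\infty g_n^*(x)e_n$ is a positive projection onto $[e_n]$. In particular this formula defines a bounded operator on all of $E$, which forces the series $\sum_{n=1}^\infty g_n^*(x)e_n$ to converge for every $x\in E$; setting $e_n^*:=g_n^*$ gives $(2)$.

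For $(2)\Rightarrow(1)$, let $(e_n^*)$ be as in $(2)$ and set $P_N x:=\sum_{n=1}^N e_n^*(x)e_n$ and $Px:=\sum_{n=1}^\infty e_n^*(x)e_n$, the latter being well defined by hypothesis. Each $P_N$ is a finite-rank, hence bounded, operator, and for each fixed $x\in E$ the sequence $(P_N x)_N$ converges and is therefore bounded; by the uniform boundedness principle $M:=\sup_N\norm{P_N}<\infty$, so that $\norm{Px}=\lim_N\norm{P_N x}\le M\norm{x}$ and $P\in\mathcal{L}(E)$. Since $\Range P_N\subset[e_n]$ and $[e_n]$ is closed, $\Range P\subset[e_n]$; and the biorthogonality relations give $Pe_m=e_m$ for every $m$, so $P$ is the identity on $\Span\{e_n\}$ and, by continuity, on $[e_n]$. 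Hence $P^2=P$ and $\Range P=[e_n]$, so $[e_n]$ is complemented.

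The argument is largely routine; the one place where a little care is needed is the step in $(2)\Rightarrow(1)$ that upgrades pointwise convergence of the defining series to boundedness of $P$ --- this is precisely where the uniform boundedness principle is applied to the truncations $P_N$ --- together with the observation that the relations $\langle e_n^*,e_m\rangle=\delta_{nm}$ pin down $P$ to be the identity on the span of the $e_n$ and hence, after passing to the closure, identify $\Range P$ with $[e_n]$. Note that reflexivity of $E$ is not used in $(2)\Rightarrow(1)$; it enters only through the invocation of Proposition \ref{positive projection} in $(1)\Rightarrow(2)$.
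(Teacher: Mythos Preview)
Your proof is correct and follows essentially the same approach as the paper: the paper states that the corollary is ``derived from Proposition~\ref{positive projection}'' (your $(1)\Rightarrow(2)$) and, in the remark immediately following the statement, observes that convergence of $\sum_{n=1}^\infty e_n^*(x)e_n$ for every $x$ makes $P$ a positive projection onto $[e_n]$ (your $(2)\Rightarrow(1)$). You have simply made explicit the uniform boundedness step that the paper leaves implicit.
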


Note that if $\sum_{n=1}^\infty e_n^*(x)e_n$ converges for every $x\in E$, then the map $P\colon x\mapsto \sum_{n=1}^\infty e_n^*(x)e_n$ defines a positive projection from $E$ onto $[e_n]$. In particular, the above result yields that a reflexive Banach lattice $E$ contains a complemented positive disjoint sequence if and only if $E^*$ does.

The question about the existence of complemented disjoint sequences has a positive answer for most examples of Banach lattices considered in the literature. For instance, if a Banach lattice is atomic (or has an infinite atomic part), that means that $E$ has an unconditional basis inducing the order, and trivially this provides a positive disjoint complemented sequence.

On the other hand, it is well known that in a non-atomic order continuous Banach lattice $E$, every unconditional basic sequence $(u_n)$ spanning a complemented subspace is equivalent to a disjoint sequence $(f_n)$ spanning also a complemented subspace provided that $[u_n]$ is lattice anti-euclidean (that is, $[u_n]$ does not contain uniformly complemented lattice copies of $\ell_2^n$ for every $n$, see \cite[Theorem  3.4]{Casazza-Kalton}).

Another family of spaces which always contain complemented disjoint sequences is that of rearrangement invariant spaces. Using the averaging projection, every sequence of normalized positive characteristic functions over a family of disjoint sets is complemented in any r.i. space (cf. \cite[Theorem 2.a.4]{LT2}).

For DH Banach lattices, the existence of complemented disjoint sequences turns out to be equivalent to the  $\mathfrak{P}$ property studied above. This was proved in \cite[Theorem 4.4]{FHSTT}:

\begin{theorem}\label{star-P}
Let $E$ be a DH Banach lattice. $E$ has the  $\mathfrak{P}$ property if and only if $E$ contains a complemented positive disjoint sequence.
\end{theorem}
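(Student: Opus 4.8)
We establish the two implications separately, working (as throughout the previous sections) in the reflexive setting, where the $\mathfrak{P}$ property and the auxiliary results apply.

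\emph{Sufficiency.} Suppose $(e_n)$ is a positive disjoint sequence whose span is complemented in $E$. By Proposition~\ref{positive projection} we may assume the projection is positive, of the form $Qx=\sum_n g_n^*(x)e_n$ with $(g_n^*)$ a positive disjoint sequence in $E^*$ and $\langle g_n^*,e_m\rangle=\delta_{nm}$. Let $(f_n)$ be an arbitrary positive normalized disjoint sequence in $E$. Since $E$ is DH there are a subsequence $(n_k)$ and $C>0$ with $(f_{n_k})\overset{C}{\sim}(e_{n_k}/\norm{e_{n_k}})$, so that $J\colon[e_{n_k}]\to[f_{n_k}]$, $Je_{n_k}=\norm{e_{n_k}}f_{n_k}$, is a $C$-isomorphism onto $[f_{n_k}]\subseteq[f_n]$. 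Composing $J$ with the positive coordinate projection $Q_kx=\sum_k g_{n_k}^*(x)e_{n_k}$ of $E$ onto $[e_{n_k}]$ yields $T:=JQ_k\colon E\to[f_n]$, for which a direct computation gives $T^*f_{n_k}^*=\norm{e_{n_k}}g_{n_k}^*$. This is a disjoint sequence in $E^*$ (a rescaling of a subsequence of $(g_n^*)$), and it is seminormalized because $1=\norm{e_{n_k}}\langle g_{n_k}^*,e_{n_k}/\norm{e_{n_k}}\rangle\le\norm{T^*f_{n_k}^*}\le\norm{T}\,\norm{f_{n_k}^*}_{[f_n]^*}\le\norm{T}$. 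Hence $E$ has the $\mathfrak{P}$ property.

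\emph{Necessity --- setup.} Assume $E$ is DH with the $\mathfrak{P}$ property. Fix a positive normalized disjoint sequence $(f_n)$. By the equivalent distance form of the $\mathfrak{P}$ property (\cite[Proposition~3.3]{FHSTT}) there is a positive operator $T\colon E\to[f_n]$ with $\liminf_n\dist(f_n,T(B_E))<1$; choose a subsequence $(n_k)$ and $\varepsilon>0$ with $\dist(f_{n_k},T(B_E))<1-\varepsilon$. Composing $T$ with the norm-one coordinate projection onto $[f_{n_k}]$, we may assume $Tx=\sum_k\phi_k(x)f_{n_k}$ where $\phi_k:=T^*f_{n_k}^*\ge 0$. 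Picking $x_k\in B_E$ with $\norm{f_{n_k}-Tx_k}<1-\varepsilon$ and applying $f_{n_k}^*$ gives $\phi_k(x_k)=\langle f_{n_k}^*,Tx_k\rangle>\varepsilon$; replacing $x_k$ by $\abs{x_k}$ (and using $\phi_k\ge 0$) we may take $x_k\ge 0$. Finally, for any $z\in E$ the scalar $\langle\phi_k,z\rangle=\langle f_{n_k}^*,Tz\rangle$ is the $n_k$-th coordinate of $Tz\in[f_n]$, hence tends to $0$; so $\phi_k\to 0$ in the weak$^*$ topology of $E^*$.

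\emph{Necessity --- extracting a complemented disjoint sequence.} Pass to a weakly convergent subsequence $x_k\rightharpoonup x_0$ (possible by reflexivity). Then $(x_k-x_0)$ is weakly null and seminormalized, its norms being bounded below since $\phi_k(x_k-x_0)\ge\varepsilon/2$ eventually (as $\phi_k(x_0)\to0$). Since $E$ has order continuous norm, a standard disjointification gives a subsequence of $(x_k-x_0)$ which, up to a perturbation of arbitrarily small total norm, is a disjoint sequence; passing to positive parts and relabelling we obtain a positive disjoint seminormalized sequence $(d_i)$ in $E$ and indices $k_i$ with $\phi_{k_i}(d_i)\ge\varepsilon/4$, and $(d_i)$ is weakly null (reflexivity). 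By disjoint homogeneity applied to $(d_i/\norm{d_i})$ and $(f_{n_{k_i}})$, after a further subsequence we may assume $(d_i)\overset{C}{\sim}(f_{n_{k_i}})$, so $D\colon[f_{n_{k_i}}]\to[d_i]$, $Df_{n_{k_i}}=d_i$, is an isomorphism onto. Let $P'$ be the coordinate projection of $[f_{n_k}]$ onto $[f_{n_{k_i}}]$ and put $W:=P'TD\colon[f_{n_{k_i}}]\to[f_{n_{k_i}}]$; its matrix with respect to $(f_{n_{k_i}})$ is $\bigl(\phi_{k_i}(d_j)\bigr)_{i,j}$, whose diagonal entries $\phi_{k_i}(d_i)$ lie in $[\varepsilon/4,M]$ with $M:=\sup_i\norm{\phi_{k_i}}\cdot\sup_i\norm{d_i}<\infty$. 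Since $d_j\rightharpoonup 0$ and $\phi_k\to 0$ weak$^*$, the off-diagonal entries $\phi_{k_i}(d_j)$ ($i\ne j$) tend to $0$ both as $i\to\infty$ with $j$ fixed and as $j\to\infty$ with $i$ fixed; a routine diagonal selection then yields a subsequence on which $\sum_{i\ne j}\abs{\phi_{k_i}(d_j)}$ is as small as we wish, so $W=\Lambda+B$ with $\Lambda$ diagonal and invertible and $\norm{\Lambda^{-1}}\norm{B}<1$, whence $W$ is invertible on $[f_{n_{k_i}}]$. Then $\Pi:=DW^{-1}P'T\colon E\to E$ is a bounded projection onto $D([f_{n_{k_i}}])=[d_i]$, so $(d_i)$ is a complemented positive disjoint sequence.

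\emph{Main obstacle.} The delicate step is the last one. The $\mathfrak{P}$ property only produces an operator $T$ \emph{into} $[f_n]$, with no control over $T$ on $[f_n]$ itself, so the diagonal coefficients $\langle f_{n_k}^*,Tf_{n_k}\rangle$ need not be bounded away from zero and $[f_{n_k}]$ cannot be complemented directly. The remedy is to replace those uncontrolled coefficients by the values $\phi_k(x_k)$ coming from the distance characterization, to upgrade the non-disjoint witnesses $x_k$ to an honest positive disjoint sequence $(d_i)$ using weak nullity together with order continuity, and to realign $(d_i)$ with a block subsequence of $(f_n)$ via disjoint homogeneity; this is precisely what forces the $\mathfrak{P}$ property, disjoint homogeneity, and the reflexivity of $E$ all to bear at once. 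The bookkeeping needed to make all of the limiting conditions hold along a single subsequence is routine and has been left implicit.
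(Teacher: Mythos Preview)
The survey does not reproduce a proof of this theorem; it simply cites \cite[Theorem~4.4]{FHSTT}. So there is no ``paper's own proof'' to compare against, and I will evaluate your argument on its merits.

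Your sufficiency direction is correct and is the natural argument: starting from one complemented disjoint sequence, disjoint homogeneity lets you transport the projection (via the isomorphism $J$) to a subsequence of any other disjoint sequence, and the computation $T^*f_{n_k}^*=\norm{e_{n_k}}g_{n_k}^*$ verifies the $\mathfrak P$ property directly.

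The necessity direction, however, contains a genuine gap at the ``standard disjointification'' step. You claim that order continuity alone allows you to perturb a subsequence of the weakly null sequence $(x_k-x_0)$ to a disjoint sequence. This is false in general: in $L_p[0,1]$ with $p\ne 2$ (a reflexive $p$-DH lattice), take $x_k=(1+r_k)/\norm{1+r_k}$ with $r_k$ the Rademacher functions; then $x_k\ge 0$, $x_k\rightharpoonup x_0$, and $x_k-x_0$ is a multiple of $r_k$, which has \emph{no} subsequence close to a disjoint sequence (its span is $\ell_2$, whereas disjoint sequences span $\ell_p$). Since your $x_k$ are chosen only to satisfy $\phi_k(x_k)>\varepsilon$, nothing prevents such pathological choices, and the rest of the argument (the matrix perturbation producing the projection $\Pi$) never gets off the ground.

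The fix is to disjointify on the dual side, where positivity is available. You already observed that $\phi_k=T^*f_{n_k}^*\ge 0$ is weak$^*$-null, hence weakly null in the reflexive (so order continuous) lattice $E^*$. For \emph{positive} weakly null seminormalized sequences the Kadec--Pe\l czy\'nski alternative does force an almost-disjoint subsequence: embedding the band generated by the $\phi_k$ into some $L_1(\mu)$, positivity gives $\norm{\phi_k}_{L_1}=\langle\phi_k,\chi_\Omega\rangle\to 0$, which is exactly the second branch of the dichotomy. Equivalently, you may invoke Definition~\ref{Prop P} itself rather than the distance form: it hands you directly that $(T^*f_{n_k}^*)$ is equivalent to a seminormalized \emph{disjoint} sequence in $E^*$. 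Either way, once $(\phi_{k_i})$ is almost disjoint in $E^*$ you can use the corresponding band projections in $E$ to cut the witnesses $x_{k_i}$ into genuinely disjoint positive pieces $d_i$ with $\phi_{k_i}(d_i)$ still bounded below, after which your diagonal-dominance argument goes through unchanged.
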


In fact, in most instances of DH Banach lattices such as  $L_p$ spaces, Lorentz spaces and some Orlicz spaces, every disjoint sequence has a complemented subsequence. This motivates the following.

\begin{definition}
  A Banach lattice $E$ is called \term{disjointly complemented} (DC) if every disjoint sequence $(x_n)$ has a subsequence whose span is complemented in $E$.
\end{definition}

The study of the relation between DC and DH Banach lattices appears now natural. More specifically, we are interested in deciding whether DH Banach lattices must be  DC.

Let us consider first the non-reflexive case. Recall that if $E$ is non-reflexive, then $E$ either contains a lattice copy of $c_0$ or of $\ell_1$ (cf. \cite[Theorem 2.4.15]{MN}). Therefore, if $E$ is DH and non-reflexive, it follows that it is either $1$-DH or $\infty$-DH.

From Sobczyk's  (\cite[Theorem 2.5.9]{albiac-kalton}), it easily follows that if $E$ is a separable Banach lattice which is $\infty$-DH then it is DC. For the 1-DH case, we will use the following fact (\cite[Lemma~2.3.11]{MN}): If a positive disjoint sequence in a Banach lattice is equivalent to the unit vector basis of $\ell_1$ then its closed span is complemented. It follows  that if $E$ is $1$-DH, then every positive disjoint sequence has a complemented subsequence. Splitting a sequence into its positive and negative parts it can be seen that 1-DH Banach lattices are in fact DC.

Hence, \emph{if $E$ is a separable non-reflexive Banach lattice which is DH, then $E$ is DC}. Clearly, the separability of $E$ is essential here: $\ell_\infty$ is non-reflexive and DH, however it is not DC. In fact, every normalized disjoint sequence is equivalent to the unit vector basis of $c_0$ and by Phillips-Sobczyk's theorem (cf. \cite[Theorem 2.5.5]{albiac-kalton}, \cite[Theorem 2.a.7]{LT1}), the space $\ell_\infty$ does not contain any complemented subspace isomorphic to $c_0$.

Let us consider now the case of reflexive Banach lattices. Does DH imply DC in this context? Recall that in Theorem~\ref{duality lemma P} it was proved that a reflexive Banach lattice $E$ with the  $\mathfrak{P}$ property is DH provided so is $E^*$. The following theorem gives a partial answer to this question. It sumarizes the work done in \cite{FHSTT}.

\begin{theorem}\label{DHandDC}
Let $E$ be a reflexive Banach lattice which  contains a complemented positive disjoint sequence. If  $E$ is DH, then the following are equivalent:
	\begin{enumerate}
	\item[(a)] $E^*$ is DH,
	\item[(b)] $E^*$ has the $\mathfrak{P}$ property,
	\item[(c)] $E^* $ is DC.
	\end{enumerate}

Furthermore, if $E$ and $E^*$ are DH,  then $E$ is DC.

\end{theorem}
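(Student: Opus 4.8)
The plan is to isolate the genuinely new content---the ``furthermore'' clause---and to derive the equivalence of $(a)$, $(b)$, $(c)$ from it together with the quoted results. First I would record that, since $E$ is reflexive and contains a complemented positive disjoint sequence, so does $E^*$ (by the remark after Corollary~\ref{star-property}); then by Theorem~\ref{star-P}, $E$ has the $\mathfrak{P}$ property, and $E^*$ has it as soon as $E^*$ is DH---this gives $(a)\Rightarrow(b)$ immediately. For $(b)\Rightarrow(a)$ I would apply Theorem~\ref{duality lemma P} to the reflexive lattice $E^*$: it has $\mathfrak{P}$ by $(b)$, and its dual $E^{**}=E$ is DH, so $E^*$ is DH. For $(c)\Rightarrow(b)$: if $E^*$ is DC then every positive disjoint sequence in $E^*$ has a subsequence whose span is complemented, and by Proposition~\ref{positive projection} (available since $E^*$ is reflexive) the projection onto that span can be chosen positive; a lattice with that property has $\mathfrak{P}$. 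At that point everything reduces to proving the single statement $(\star)$: \emph{if $E$ is a reflexive Banach lattice with $E$ and $E^*$ both DH and $E$ containing a complemented positive disjoint sequence, then $E$ is DC}---because $(\star)$ applied to $E$ is the ``furthermore'' clause, and $(\star)$ applied to $E^*$ (whose hypotheses hold once $(a)$ holds) gives $(c)$, closing the cycle.

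To prove $(\star)$ I would take a normalized disjoint sequence $(x_n)$ in $E$ and a normalized complemented positive disjoint sequence $(e_n)\subset E$, and use Proposition~\ref{positive projection} to arrange the projection onto $[e_n]$ in the form $Rx=\sum_n\langle g_n^*,x\rangle e_n$ with $(g_n^*)$ positive, seminormalized, disjoint in $E^*$ and $\langle g_n^*,e_m\rangle=\delta_{nm}$ (this survives passing to subsequences of $(e_n)$, $R$ acquiring an extra band projection). First I would invoke DH of $E$ to pass to subsequences with $(x_n)\overset{C}{\sim}(e_n)$ and write $U\colon[e_n]\to[x_n]$, $Ue_n=x_n$, for the induced isomorphism. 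Next I would build a normalized disjoint sequence $(\eta_n)\subset E^*$ biorthogonal to $(x_n)$: since $E$ is order continuous the band generated by $x_n$ is a projection band $\pi_nE$, and one takes $\eta_n=\phi_n\circ\pi_n$ with $\phi_n\in(\pi_nE)^*$ norming $x_n$. Then I would invoke DH of $E^*$ to pass to further subsequences with $(\eta_n)\overset{C'}{\sim}(g_n^*)$ and write $W\colon[g_n^*]\to[\eta_n]$, $Wg_n^*=\eta_n$.

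The core of the argument is then the following transport. Because $[e_n]$ is complemented in $E$ by the bounded projection $R$ of the form $Rx=\sum_n\langle g_n^*,x\rangle e_n$, and because $E$ is reflexive, $[g_n^*]^*$ is canonically isomorphic to $[e_n]$, with the vectors $e_n$ acting as the coordinate functionals of the basis $(g_n^*)$ of $[g_n^*]$; this identification is bounded below since $\|z\|_E\le\|R\|\sup\{|\langle z,\xi\rangle|:\xi\in[g_n^*],\ \|\xi\|\le1\}$ for $z\in[e_n]$ (from $\langle z,\phi\rangle=\langle z,R^*\phi\rangle$ and $R^*\phi\in[g_n^*]$). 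Reading the adjoint $W^*$ through this identification gives a bounded operator $W^*\colon E\to[e_n]$, and $W^*x_j=\sum_i\langle\eta_i,x_j\rangle e_i=e_j$ since $\langle\eta_i,x_j\rangle=\delta_{ij}$. Hence $P:=U\circ W^*\colon E\to[x_n]$ is bounded with range in $[x_n]$ and $Px_j=Ue_j=x_j$, so $P^2=P$; thus $[x_n]$ is complemented and $E$ is DC. (The case of a general, not necessarily normalized, disjoint sequence reduces to this by normalizing the nonzero terms.)

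I expect the main obstacle to be precisely that last transport: equivalence of two disjoint sequences does not by itself transfer complementation, so the proof must genuinely combine DH of $E$ (to align the primal sequences $(x_n)$ and $(e_n)$) with DH of $E^*$ (to align the dual sequences $(\eta_n)$ and $(g_n^*)$) and then exploit that the \emph{specific} sequence $(e_n)$ spans a complemented subspace carrying a disjoint biorthogonal system in $E^*$; getting the reflexive identification of $[g_n^*]^*$ with $[e_n]$ exactly right---so that the $e_n$ really are the coordinate functionals of $(g_n^*)$ and $U\circ W^*$ comes out as a bona fide projection---is the delicate point. A secondary, more routine issue is the construction of the normalized disjoint biorthogonal sequence $(\eta_n)$ in $E^*$, which is where order continuity of $E$ (making the bands generated by the $x_n$ projection bands) enters.
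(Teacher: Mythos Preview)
The paper is a survey and does not give a proof of this theorem; it simply states the result and cites \cite{FHSTT} (``It summarizes the work done in \cite{FHSTT}''). So there is no proof in the paper to compare your proposal against.

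That said, your proposal is a correct and natural argument. The logical reductions $(a)\Rightarrow(b)$, $(b)\Rightarrow(a)$, and $(c)\Rightarrow(b)$ via Theorem~\ref{star-P}, Theorem~\ref{duality lemma P}, and Proposition~\ref{positive projection} are exactly right, and your observation that the remaining content is the single statement $(\star)$ is clean. The proof of $(\star)$ is also correct: the key identification $[g_n^*]^*\cong[e_n]$ follows from the fact that $R^*$ is a projection of $E^*$ onto $[g_n^*]$, so that the duality pairing restricted to $[e_n]\times[g_n^*]$ is nondegenerate with the estimate you wrote; under this identification the composite $E\to[\eta_n]^*\xrightarrow{W^*}[g_n^*]^*\cong[e_n]$ is bounded and sends $x_j$ to $e_j$, whence $U\circ W^*$ is a bounded projection onto $[x_n]$. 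The construction of the disjoint biorthogonal sequence $(\eta_n)$ via band projections is standard and works as stated since reflexivity gives order continuity. One small bookkeeping point worth making explicit in a write-up: the two subsequence extractions (first DH of $E$, then DH of $E^*$) must be nested, and after each one the projection $R$ onto $[e_{n_k}]$ retains the form $\sum_k\langle g_{n_k}^*,\cdot\rangle e_{n_k}$, so all the identifications survive---you note this but it deserves one line of justification.
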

Using ideas from \cite{CJZ} it can also be shown that if $E$ is $p$-DH and $p$-convex Banach lattice for some $1\le p<\infty$, then $E$ is DC.

\section{Uniformly DH Banach lattices}\label{sec_uniformlyDH}

Until now  no attention has been given to the equivalence constants involved in the definition of a DH Banach lattice. The purpose of this section is to illustrate the role played by these.

In general an $\ell_p$-sum of $p$-DH spaces need not be DH. Indeed, given $n\in\mathbb{N}$, let $X_n$ denote the completion of the space of all eventually zero sequences $c_{00}$ with respect to the norm
$$
\bignorm{(a_k)}_{X_n}=\sup\Bigl\{\sum_{i=1}^n\abs{a_{k_i}}+\Bigl(\sum_{i>n}\abs{a_{k_i}}^p\Bigr)^{\frac1p}: \,k_1<k_2<\ldots<k_i<\ldots\Bigr\}.
$$

It is easy to see that $\norm{\cdot}_{X_n}$ is equivalent to the $\ell_p$ norm. In fact, we have
\begin{displaymath}
 \bignorm{(a_k)}_{\ell_p}\leq \bignorm{(a_k)}_{X_n}  \leq (n^{\frac1q}+1)\bignorm{(a_k)}_{\ell_p}.
\end{displaymath}

In \cite[Example 6.4]{FHSTT} it was proved that the space $\Bigl(\bigoplus_{n=1}^\infty X_n\Bigr)_{\ell_p}$ endowed with the $\ell_p$-sum of the corresponding norms $\|\cdot\|_{X_n}$ is not DH.

Note that in the above example, the equivalence constant of disjoint sequences in different $X_n$-summands grows without bound. After this  example it seems only natural to introduce the following:

\begin{definition}
A Banach lattice $E$ is \term{uniformly disjointly homogeneous} if there is a constant $C>0$ such that every two disjoint normalized
sequences $(x_n)$ and $(y_n)$ in $E$, have subsequences such that $(x_{n_k})\overset{C}\sim(y_{n_k})$.
\end{definition}

Clearly every uniformly DH Banach lattice $E$ is DH. The converse is nevertheless not true. In fact,
 stemming from deep results of W. B. Johnson and E. Odell in \cite{JO}, and H. Knaust and E. Odell in \cite{KO} the following result is given in \cite{FHSTT}

\begin{theorem}
For every $1<p<\infty$, there exists a super-reflexive atomic Banach lattice $E_p$ which is $p$-DH but not uniformly DH.
\end{theorem}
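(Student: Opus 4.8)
The plan is to produce, for each $1<p<\infty$, a space $E_p$ carrying a normalized $1$-unconditional basis $(e_k)$ — so that, regarded as an atomic Banach lattice with $(e_k)$ inducing the order, it is of the sought type — which enjoys two properties: \textbf{(a)} every normalized block basis of $(e_k)$ has a subsequence equivalent to the unit vector basis of $\ell_p$; and \textbf{(b)} for every $K>0$ there is a normalized block basis of $(e_k)$ no subsequence of which is $K$-equivalent to the unit vector basis of $\ell_p$. Property (a) will give that $E_p$ is $p$-DH, property (b) that it is not uniformly DH, so the whole weight of the proof lies in constructing $E_p$, super-reflexive, with (a) and (b) holding at once — and this is exactly what the deep results of Johnson--Odell \cite{JO} and Knaust--Odell \cite{KO} are for.

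For the construction one follows \cite{JO,KO}. Roughly, $E_p$ is assembled (with a normalized $1$-unconditional basis) from building blocks carrying normalized $1$-unconditional bases whose ``distortion'' away from the corresponding $\ell_p^n$-basis grows to infinity with the block index; observe that the blocks $X_n$ of the example $\bigl(\bigoplus X_n\bigr)_{\ell_p}$ above are unsuitable here, since containing $\ell_1^n$ uniformly would destroy super-reflexivity, so one must use uniformly convex blocks instead. The role of Johnson--Odell is to determine which finite-dimensional spaces may serve as such blocks in a space \emph{all} of whose normalized block bases have $\ell_p$-subsequences — the key point being to rule out block bases with no $\ell_p$-subsequence, which is what spoils the $p$-DH property in $\bigl(\bigoplus X_n\bigr)_{\ell_p}$; the role of Knaust--Odell is a blocking/amalgamation scheme which nonetheless makes the unbounded distortion of the blocks register on some block basis, thereby forbidding a uniform equivalence constant. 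Taking the blocks uniformly convex with a common power-type modulus, the resulting $E_p$ can be arranged to be super-reflexive, and we view it as an atomic Banach lattice through its $1$-unconditional basis.

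Granting (a) and (b), the theorem follows quickly. First, $E_p$ is $p$-DH: a normalized disjoint sequence in the order-continuous lattice $E_p$ has, by the usual gliding-hump/small-perturbation argument, a subsequence equal up to an arbitrarily small perturbation to a normalized block basis of $(e_k)$; by (a) this block basis has a further subsequence equivalent to the $\ell_p$-basis, and by the perturbation lemma so does the original disjoint sequence. Hence every normalized disjoint sequence in $E_p$ has a subsequence equivalent to the $\ell_p$-basis, i.e.\ $E_p$ is $p$-DH. Second, $E_p$ is not uniformly DH. By (a) applied to $(e_k)$ there is a subsequence $(f_n)$ of $(e_k)$ which is $C_0$-equivalent to the $\ell_p$-basis, and $(f_n)$ is a normalized disjoint sequence. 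Suppose $E_p$ were uniformly DH with constant $K$. Apply (b) with the constant $KC_0$ to get a normalized block basis $(z_n)$ no subsequence of which is $KC_0$-equivalent to the $\ell_p$-basis. Since $(z_n)$ and $(f_n)$ are normalized disjoint sequences, uniform DH yields a subsequence of $(z_n)$ that is $K$-equivalent to a subsequence of $(f_n)$; composing with the $C_0$-equivalence of $(f_n)$ to the $\ell_p$-basis, that subsequence of $(z_n)$ is $KC_0$-equivalent to the $\ell_p$-basis, contradicting the choice of $(z_n)$. Thus no uniform DH constant exists.

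The main obstacle is the construction itself: securing $1$-unconditionality, super-reflexivity, (a) and (b) simultaneously. Properties (a) and (b) pull in opposite directions — (a) demands that every block basis can be tamed to $\ell_p$, (b) that the taming constant be unbounded over block bases — and the delicate point, supplied by \cite{JO} and \cite{KO}, is to reconcile them while keeping the space reflexive and, after renorming, super-reflexive. Once $E_p$ is at hand, the remaining steps — the reduction of disjoint sequences to block bases in an order-continuous atomic lattice, and the short composition-of-equivalences argument against uniformity — are routine.
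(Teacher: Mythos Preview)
Your proposal is correct and aligns with the paper's approach: the paper itself gives no proof here but attributes the construction to \cite{FHSTT}, noting that it ``stems from deep results'' of Johnson--Odell \cite{JO} and Knaust--Odell \cite{KO}, which is precisely the framework you invoke. Your reduction to properties (a) and (b) and the subsequent derivation of $p$-DH and non-uniform-DH are sound; the only caveat is that your informal attribution of specific roles to \cite{JO} and \cite{KO} is somewhat imprecise relative to what those papers actually prove, but since the survey defers all details to \cite{FHSTT} anyway, this does not constitute a gap.
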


As a by-product,  another example of a reflexive DH Banach lattice whose dual is not DH is obtained.

In Theorem~\ref{discrete-dual-non-DH} we have constructed examples of reflexive atomic Banach lattices (with the order induced by a $1$-unconditional basis), which are DH, but whose dual spaces are not. The case of atomic Banach lattices with the order induced by a subsymmetric basis deserves some attention. Recall that a basis $(x_n)$ is called \emph{subsymmetric} if it is unconditional and every subsequence $(x_{n_i})$ is equivalent to $(x_n)$ (cf. \cite[Chapter 3]{LT1}).

Also, recall that a normalized basis $(e_n)$ in a Banach space $X$ is said to be a \term{Rosenthal basis} if every normalized block-sequence of $(e_n)$ contains a subsequence equivalent to $(e_n)$. It is an open question whether such a basis is necessarily equivalent to the unit basis of $\ell_p$ or $c_0$, see~\cite{FPR} for further details and partial results in this direction. In particular, it was observed in \cite[p.~397]{FPR} that a Rosenthal basis $(x_n)$, always satisfies that every subsequence $(x_{n_i})$ is equivalent to $(x_n)$.

\begin{proposition}\label{DH-Ros}
  Let $E$ be a reflexive atomic Banach lattice with the order induced by a subsymmetric basis $(e_n)$. Then $E$ is DH if and only if $(e_n)$ is a Rosenthal basis.
\end{proposition}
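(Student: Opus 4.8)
The plan is to route both implications through the single assertion $(\star)$: \emph{every normalized disjoint sequence in $E$ has a subsequence equivalent to the basis $(e_n)$}. This is almost a restatement of the Rosenthal property, the two differences being that $(\star)$ speaks of disjoint sequences rather than block sequences, and that it refers to $(e_n)$ rather than to arbitrary subsequences thereof; the second point is harmless, since a subsequence of a subsymmetric basis is equivalent to the whole basis with a constant independent of the subsequence (the classical uniform boundedness argument, cf.\ \cite[Chapter~3]{LT1}). Throughout I use that, $E$ being atomic with order induced by $(e_n)$, two elements are disjoint exactly when their basis expansions have disjoint supports, and that a normalized disjoint sequence is automatically $1$-unconditional.

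Assume first that $(e_n)$ is a Rosenthal basis, and let us deduce $(\star)$. Given a normalized disjoint sequence $(x_n)$, each $x_n$ is the norm limit of its finitely supported truncations, so one may choose finitely supported $x_n'$ with $\sum_n\norm{x_n-x_n'}$ arbitrarily small; by the small perturbation principle $(x_n'/\norm{x_n'})$ is then a normalized disjoint sequence equivalent to $(x_n)$, with pairwise disjoint \emph{finite} supports $F_n$ (each $F_n$ being contained in the support of $x_n$). Since at most $N$ of the $F_n$ can meet $\{1,\dots,N\}$, a straightforward recursion yields indices $n_1<n_2<\cdots$ with $\max F_{n_k}<\min F_{n_{k+1}}$ for every $k$; that is, $(x_{n_k}'/\norm{x_{n_k}'})$ is a normalized block sequence of $(e_n)$, and the Rosenthal property produces a further subsequence equivalent to $(e_n)$. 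This proves $(\star)$. Now $(\star)$ gives that $E$ is DH: for normalized disjoint $(x_n),(y_n)$, first pass to a subsequence making $(x_n)\sim(e_n)$; then $(y_n)$ is still normalized disjoint, so pass to a further common subsequence making $(y_n)\sim(e_n)$ as well — this second passage preserves $(x_n)\sim(e_n)$ by the subsymmetry remark — whence $(x_n)\sim(e_n)\sim(y_n)$.

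For the converse, assume $E$ is DH and let $(u_k)$ be a normalized block sequence of $(e_n)$; both $(u_k)$ and $(e_n)$ are normalized disjoint sequences in $E$. The definition of DH applied to this pair yields $C>0$ and a subsequence $(n_k)$ with $(u_{n_k})\overset{C}{\sim}(e_{n_k})$; combined with the uniform subsymmetry estimate $(e_{n_k})\overset{K}{\sim}(e_k)$ this gives $(u_{n_k})$ equivalent to $(e_n)$. Hence $(e_n)$ is a Rosenthal basis.

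The only step that goes beyond bookkeeping is the reduction of an arbitrary normalized disjoint sequence to a block sequence; its two ingredients — approximating each vector by a finitely supported truncation and the elementary recursion that orders the (now finite) supports — are routine, so I anticipate no genuine obstacle. The one point to watch is that the subsymmetry constant enters uniformly in the two successive passages to subsequences in the second paragraph, which is legitimate precisely because all subsequences of $(e_n)$ are equivalent to it with one and the same constant.
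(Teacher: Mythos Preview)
Your argument is correct. The paper is a survey and states this proposition without proof, so there is no argument to compare against; the route you take---reducing an arbitrary normalized disjoint sequence to a block sequence by truncation and then ordering the finite supports, combined with the uniform subsymmetry constant---is the natural one and goes through as written.
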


Let $X$ be a Banach space with a Rosenthal basis $(e_n)$. It was proved in~\cite[Theorem 1, Proposition 7]{FPR} that $(e_n)$ is equivalent to the unit basis of $\ell_p$ or $c_0$ if $(e_n)$ is ``uniformly'' Rosenthal or if $E^*$ also has a Rosenthal basis. In view of Proposition~\ref{DH-Ros}, we can now restate these statements in terms of disjoint homogeneity as follows.

\begin{proposition}\label{DH-Ros-app}
  Let $E$ be a reflexive atomic Banach lattice with the order induced by a subsymmetric normalized basis $(e_n)$. Then $(e_n)$ is  equivalent to the unit basis of $\ell_p$ for some $1<p<\infty$ if any of the following conditions is satisfied:
  \begin{enumerate}
  \item $E$ is uniformly DH, or
  \item\label{DH-Ros-app-dual} $E$ and $E^*$ are both DH.
  \end{enumerate}
\end{proposition}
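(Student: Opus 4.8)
The plan is to deduce both parts from the results of \cite{FPR} on Rosenthal bases, using Proposition~\ref{DH-Ros} as the bridge. Recall that, by Proposition~\ref{DH-Ros}, under the present hypotheses $E$ is DH exactly when $(e_n)$ is a Rosenthal basis, and likewise for $E^*$ once we know that its order is induced by a subsymmetric basis. Also, since neither $\ell_1$ nor $c_0$ is reflexive, as soon as we show that $(e_n)$ is equivalent to the unit vector basis of $\ell_p$ or $c_0$, the range must be $1<p<\infty$. So in each case it only remains to check the additional hypothesis in the corresponding statement of \cite{FPR}.

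For~(i), assume $E$ is uniformly DH with constant $C>0$. In particular $E$ is DH, so Proposition~\ref{DH-Ros} already gives that $(e_n)$ is a Rosenthal basis. Now, since $(e_n)$ is a $1$-unconditional basis of the atomic lattice $E$, every normalized block basis $(w_k)$ of $(e_n)$ is a normalized disjoint sequence; applying the uniform DH estimate to $(w_k)$ and to $(e_n)$ itself yields a common subsequence along which they are $C$-equivalent, and since every subsequence of $(e_n)$ is uniformly equivalent to $(e_n)$ by subsymmetry, we obtain a subsequence of $(w_k)$ uniformly equivalent to $(e_n)$. Thus $(e_n)$ is a uniformly Rosenthal basis in the sense of \cite{FPR}, and \cite[Theorem~1]{FPR} yields that $(e_n)$ is equivalent to the unit vector basis of $\ell_p$ or $c_0$.

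For~(ii), assume that $E$ and $E^*$ are both DH. As $E$ is reflexive, $(e_n)$ is a shrinking (and boundedly complete) basis, so the biorthogonal functionals $(e_n^*)$ form a normalized basis of $E^*$; being the dual of a shrinking subsymmetric basis, $(e_n^*)$ is again subsymmetric, and it induces the lattice order of the atomic reflexive lattice $E^*$. Hence Proposition~\ref{DH-Ros} applies to $E^*$, and since $E^*$ is DH we conclude that $(e_n^*)$ is a Rosenthal basis of $E^*$. So $E$ is a Banach space with a Rosenthal basis $(e_n)$ whose dual also has a Rosenthal basis, and \cite[Proposition~7]{FPR} again gives that $(e_n)$ is equivalent to the unit vector basis of $\ell_p$ or $c_0$.

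The only points that need some care are, in~(i), matching the uniform DH constant with the notion of a uniformly Rosenthal basis (a bookkeeping issue absorbed into the subsymmetry constant), and, in~(ii), the verification that the dual data $(E^*,(e_n^*))$ satisfies the hypotheses of Proposition~\ref{DH-Ros} --- namely that $(e_n^*)$ is a normalized subsymmetric basis inducing the order of $E^*$ --- which rests on standard facts about duals of shrinking subsymmetric bases and about dual spaces of reflexive atomic Banach lattices. Beyond that, the argument is a straightforward translation, the substantive content being already contained in Proposition~\ref{DH-Ros} and the cited results from \cite{FPR}.
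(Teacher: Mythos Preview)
Your proposal is correct and follows essentially the same approach as the paper: the paper presents Proposition~\ref{DH-Ros-app} as a direct restatement of \cite[Theorem~1, Proposition~7]{FPR} via Proposition~\ref{DH-Ros}, and your argument is precisely a careful unpacking of this translation (including the verification that uniform DH yields a uniformly Rosenthal basis and that the dual data $(E^*,(e_n^*))$ again satisfies the hypotheses of Proposition~\ref{DH-Ros}).
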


In particular, if $(e_n)$ is symmetric, then Proposition~\ref{DH-Ros-app}\eqref{DH-Ros-app-dual} also follows from \cite[Theorem 3.a.10]{LT1} due to Z. Altshuler. Indeed, if $E$ is DH and $(v_n)$  is a sequence generated by one vector (which is automatically symmetric), then $(v_n)$ and $(e_n)$ have equivalent subsequences, hence are themselves equivalent. Now apply the same argument to $(e_n^*)$ in $E^*$.

We do not know whether every atomic reflexive Banach lattice with the order induced by a subsymmetric basis which is DH must be isomorphic to $\ell_p$ for some $1<p<\infty$. In this direction, if we consider the symmetric version of Tsirelson space (see \cite[Chapter X, B]{Casazza-Shura}), which does not contain $\ell_p$ subspaces, then it is not hard to see that this space fails being DH. However, let us suppose that $E$ is a reflexive Banach lattice with the $\mathfrak{P}$ property containing a disjoint subsymmetric sequence, if $E^*$  is $DH$, then $E$ must be $p$-$DH$ for some $1<p<\infty$ (see \cite[Corollary 6.10]{FHSTT}).

\section{Compact powers of strictly singular operators}\label{section compact squares}

The purpose of this section is to show how DH Banach lattices can be applied to the theory of strictly singular operators. In particular, we are interested in the extension of a result by V. Milman \cite{Milman}, which asserts that every strictly singular endomorphism on $L_p$ has compact square. This kind of results have also been studied in \cite{ADST} in the context of Banach spaces.

 Recall that an operator between Banach spaces is \emph{strictly singular} if it is not invertible on any infinite dimensional subspace. This is an important class of operators which was first introduced in connection with the perturbation of Fredholm operators \cite{Kato}, and has later proved relevant  in the modern theory of Banach spaces (see \cite{Argyros-Haydon}).

Given a Banach space $X$, we will denote by $\mathcal{K}(X)$ (respectively $\mathcal{S}(X)$) the space of all compact  (resp. strictly singular) endomorphisms on $X$.

A close notion to strict singularity was introduced in the setting of Banach lattices (\cite{Hernandez-Salinas}): given a Banach lattice $E$ and a Banach space $X$, an operator $T:E\rightarrow X$ is \emph{disjointly strictly singular} (DSS) if for any sequence of pairwise disjoint elements $(x_n)$ in $E$, the restriction of $T$ to the span $[x_n]$ is not invertible.  Recall also that an operator $T:E\rightarrow X$ is \textit{AM-compact} whenever $T([-x,x])$ is a relatively compact set in $X$ for every $x\in E_+$ (recall that the order interval $[-x,x]$ is the set $\{y\in E:|y|\leq x\}$).

In \cite{FTT2} several results about compactness of operators belonging to the singular classes  given above were proved in the context of regular operators. Recall that an operator between Banach lattices is positive when it maps positive elements to positive elements, and a regular operator is a difference of two positive ones.

\begin{theorem}\label{compact}
  Suppose that $E$ is a DH Banach lattice with order continuous norm and a weak unit. Suppose that $S$ and $T$ are
  two regular operators on $E$ such that $S$ is disjointly strictly
  singular and $T$ is AM-compact.
  \begin{enumerate}
  \item\label{dual-oc} If $E^*$ is order continuous then $ST$ is compact.
  \item\label{dual-nooc} If $E^*$ is not order continuous then $TS$ is
    compact.
  \end{enumerate}
  In particular, if $R$ is disjointly strictly singular and
  regular, then $STR$ is compact.
\end{theorem}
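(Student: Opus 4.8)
The plan is to route both parts through the Meyer--Nieberg classes of \emph{M-weakly compact}, \emph{L-weakly compact} and \emph{semicompact} operators, the decisive input being that disjoint homogeneity upgrades disjoint strict singularity to M-weak compactness. After replacing $S$ and $T$ by their moduli (on an order continuous lattice the regular AM-compact operators and the regular M-weakly compact operators form ideals stable under the modulus), the key lemma is: \emph{on a DH Banach lattice with order continuous norm, every regular DSS operator $S$ is M-weakly compact}, i.e.\ $\|Sx_n\|\to0$ for every norm-bounded disjoint $(x_n)$. To prove it I argue by contradiction: if $\|Sx_n\|\ge\delta$ along a disjoint normalized sequence, then order continuity allows a gliding-hump/disjointification producing a disjoint $(w_k)$ with $\|w_k\|\ge\delta/2$ and $\|Sx_{n_k}-w_k\|\to0$; since $E$ is DH, $(x_{n_k})$ and $(w_k/\|w_k\|)$ have $C$-equivalent subsequences, and reading this equivalence back through $S$ shows that $S$ is bounded below on the span of a disjoint sequence, contradicting that $S$ is DSS. (A routine remark upgrades ``some subsequence tends to $0$'' to the full limit required by M-weak compactness.)

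The second ingredient is the standard composition principle, which I will call (L1): \emph{if $A$ is AM-compact and $B$ is semicompact, then $AB$ is compact}, because $B(B_E)$ is almost order bounded and an AM-compact operator carries almost order bounded sets to relatively compact ones. Granting the key lemma, part~\eqref{dual-oc} is handled by duality. From $S$ M-weakly compact we get $S^*$ L-weakly compact, hence $S^*$ semicompact; and since $E^*$ is order continuous, $T$ AM-compact yields $T^*$ AM-compact. Therefore $(ST)^*=T^*S^*$ is (AM-compact)$\circ$(semicompact), compact by (L1), and Schauder's theorem gives that $ST$ is compact.

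Part~\eqref{dual-nooc} is handled directly. Here $E^*$ is not order continuous, so $E$ is non-reflexive; being DH and order continuous (hence, via the weak unit, Dedekind complete and free of $c_0$-sublattices) it cannot be $\infty$-DH, so by the dichotomy recalled in Section~\ref{DC} it is $1$-DH, that is, it has the positive Schur property. I then claim $S$ is \emph{semicompact}. Indeed $S$ is M-weakly compact, hence weakly compact, so $S(B_E)$ is relatively weakly compact; in an order continuous lattice its solid hull is again relatively weakly compact, and the positive Schur property forces every disjoint sequence in that solid hull to be norm null, i.e.\ $S(B_E)$ is almost order bounded. With $S$ semicompact and $T$ AM-compact, $TS$ is (AM-compact)$\circ$(semicompact), compact by (L1).

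Finally, the ``in particular'' assertion follows by bracketing: when $E^*$ is order continuous, $STR=(ST)R$ is compact by part~\eqref{dual-oc}; when $E^*$ is not order continuous, part~\eqref{dual-nooc} applied to the DSS operator $R$ shows $TR$ is compact, whence $STR=S(TR)$ is compact. The main obstacle is the key lemma: everything hinges on converting the purely ``one at a time'' information in the definition of DSS into uniform decay $\|Sx_n\|\to0$ on \emph{all} disjoint sequences, and it is precisely here that disjoint homogeneity is used essentially. A secondary delicate point is the step, in part~\eqref{dual-nooc}, from M-weak compactness of $S$ to its semicompactness, which genuinely relies on the positive Schur property forced by the failure of order continuity of $E^*$.
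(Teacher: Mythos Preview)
The survey does not itself prove this theorem; it is quoted from \cite{FTT2}, so there is no in-paper argument to compare against directly. Judging your proposal on its own merits: your route for part~\eqref{dual-oc} is sound. The ``key lemma'' (regular DSS $\Rightarrow$ M-weakly compact) does hold when $E^*$ is order continuous, essentially by the mechanism behind Proposition~\ref{positive DSS Mw Lw}: reflexivity makes the disjoint sequence $(x_n)$, and hence $(Sx_n)$, weakly null, so $\|Sx_n\|_{L_1}\le\langle |S|\,|x_n|,\,j^*\mathbf 1\rangle\to 0$, and then only the ``close to disjoint'' branch of the Kadec--Pe\l czy\'nski alternative survives, where DH finishes the job.

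The genuine gap is in part~\eqref{dual-nooc}. Your key lemma is \emph{false} on $1$-DH lattices, and the gliding-hump step is exactly what breaks. On $E=L_1[0,1]$ take disjoint sets $(A_n)$ of positive measure, set $x_n=\chi_{A_n}/\mu(A_n)$, and define
\[
Sf=\sum_n\Bigl(\int_{A_n} f\Bigr)\,r_n,
\]
with $(r_n)$ the Rademacher functions. Then $S$ is regular (split $r_n=r_n^+-r_n^-$) and strictly singular (it factors through the formal identity $\ell_1\hookrightarrow\ell_2$), hence DSS; yet $Sx_n=r_n$ and $\|Sx_n\|_1=1$, so $S$ is \emph{not} M-weakly compact. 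Notice that $(Sx_n)=(r_n)$ lives in the \emph{other} branch of Kadec--Pe\l czy\'nski ($\|r_n\|_{L_1}$ is bounded below), so no disjointification is available: the assertion ``order continuity allows a gliding-hump'' is simply not true without weak nullity. Consequently your chain ``$S$ M-weakly compact $\Rightarrow$ weakly compact $\Rightarrow$ semicompact'' never gets started in case~\eqref{dual-nooc}.

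What does survive in this example is the conclusion you actually need: $S$ is semicompact (in fact $S(B_{L_1})\subset[-\mathbf 1,\mathbf 1]$), so $TS$ is compact. The correct argument for~\eqref{dual-nooc} must therefore reach L-weak compactness (equivalently semicompactness, since $E$ is order continuous) of $S$ \emph{without} passing through M-weak compactness; this is where regularity and the positive Schur property are used in an essential way on the solid hull of $|S|(B_E)$. Your steps from ``$S$ semicompact'' onward, and the final ``in particular'' bracketing, are fine.
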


Observe that Theorem~\ref{compact}\eqref{dual-nooc} remains valid in the case that $S$ is not regular. Also, it remains valid if, instead of being disjointly strictly singular, $S$ is only assumed to be weakly compact. In particular, the above result yields that if $E$ is DH and $T:E\rightarrow E$ is regular, disjointly strictly singular, and AM-compact, then $T^2$ is compact.

A Banach lattice $E$ has finite cotype (or equivalently finite concavity) if and only if $E$ does not contain copies of $\ell_\infty^n$ uniformly (cf. \cite{LT2}). Moreover, every  Banach lattice $E$ with finite concavity satisfies the \textit{subsequence splitting property} (\cite{WeisSSP}). This means that every bounded sequence $(x_n)$ in $E$ has a subsequence that can be written as $x_{n_k}=g_k+h_k$, with $|g_k|\wedge|h_k|=0$, the sequence $(g_k)$ being equi-integrable and $(h_k)$ disjoint. Recall that a bounded sequence $(g_n)$ in a Banach lattice of measurable functions over a measure space $(\Omega,\Sigma,\mu)$ is equi-integrable if $\sup_n\|g_n\chi_A\|\rightarrow 0$ as $\mu(A)\rightarrow 0$. Note that every Banach lattice with finite cotype is order continuous.

\begin{theorem}\label{dh-ssp-dual}
Let $E$ be a DH Banach lattice with the subsequence splitting property, such that $E^*$ is order continuous. If $T:E\rightarrow E$ is a regular operator which is disjointly strictly singular and AM-compact, then $T$ is compact.
\end{theorem}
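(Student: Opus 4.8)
The plan is to prove directly that $T$ maps the unit ball $B_E$ into a relatively norm compact set. Localizing to the separable closed sublattice generated by a fixed bounded sequence, I may assume $E$ is a separable order continuous Banach function space over a probability space; order continuity is in any case implicit in the subsequence splitting hypothesis. Given a sequence $(x_n)$ in $B_E$, the subsequence splitting property lets me pass to a subsequence and write $x_n=g_n+h_n$ with $\abs{g_n}\wedge\abs{h_n}=0$, where $(g_n)$ is equi-integrable and $(h_n)$ is pairwise disjoint; by linearity it suffices to produce norm-convergent subsequences of $(Tg_n)$ and of $(Th_n)$.

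The equi-integrable part is routine. An equi-integrable bounded sequence in a separable order continuous Banach function space over a probability space is almost order bounded, so for each $\delta>0$ there is $u_\delta\in E_+$ with $\norm{(\abs{g_n}-u_\delta)^+}<\delta$ for all $n$. Writing $g_n=g_n'+g_n''$ with $g_n'\in[-u_\delta,u_\delta]$ and $\norm{g_n''}<\delta$, AM-compactness of $T$ makes $\{Tg_n':n\in\mathbb N\}$ totally bounded while regularity gives $\norm{Tg_n''}\le\norm{\,\abs T\,}\delta$; letting $\delta\to0$ and diagonalizing yields a norm-convergent subsequence of $(Tg_n)$.

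For the disjoint part I would first reduce, by splitting each $h_n$ into positive and negative parts (two disjoint families) and discarding a subsequence with $\norm{h_n}\to0$ if one arises, to the case $h_n\ge0$, $\norm{h_n}=1$. Two preliminary facts: (1) $T$ restricted to $[h_n]$ is strictly singular, because every infinite dimensional subspace of $[h_n]$ contains a normalized block sequence of $(h_n)$, which is a disjoint sequence in $E$ and hence one on which $T$ cannot be bounded below; (2) since $E^*$ is order continuous, $E$ contains no lattice copy of $\ell_1$, so $(h_n)$ has no subsequence equivalent to the unit vector basis of $\ell_1$, so by Rosenthal's theorem a subsequence is weakly Cauchy, and a weakly Cauchy disjoint sequence is weakly null. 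Thus, after a subsequence, $(Th_n)$ is weakly null, and so $(Th_n)$ has a norm-convergent subsequence precisely when $\liminf_n\norm{Th_n}=0$.

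It then remains to rule out $\norm{Th_n}\ge\varepsilon$ for all $n$, and this is where disjoint homogeneity must do the real work. I would apply the subsequence splitting property again, now to $(Th_n)$, writing $Th_n=G_n+H_n$ with $(G_n)$ equi-integrable and $(H_n)$ disjoint (both weakly null); use strict singularity of $T|_{[h_n]}$ to locate a normalized block sequence $(w_j)$ of $(h_n)$ with $\norm{Tw_j}\to0$; pass to the corresponding blocks of $(H_n)$ and absorb the equi-integrable contribution $(G_n)$ via AM-compactness and regularity, obtaining a disjoint sequence built from the $H_n$ whose norms tend to $0$; and then, comparing this disjoint sequence with $(h_n)$ by means of disjoint homogeneity, deduce $\norm{H_n}\to0$. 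At that point $(Th_n)$ is, up to a subsequence, equi-integrable and weakly null, and a final argument of the same flavour — again leveraging disjoint homogeneity together with regularity and AM-compactness to preclude oscillatory behaviour — gives $\norm{Th_n}\to0$, contradicting $\norm{Th_n}\ge\varepsilon$. The hard part, as expected, is this transfer: passing from the asymptotic vanishing of $T$ on a single block sequence to its vanishing along the whole disjoint sequence $(h_n)$, which is exactly the point where the homogeneity of disjoint sequences is indispensable; it amounts to the subsequence-splitting refinement, carried out in \cite{FHST}, of the estimates behind Theorem~\ref{compact}.
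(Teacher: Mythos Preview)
Your reduction to a separable order continuous lattice and your treatment of the equi-integrable summand are both sound, as is the reduction of the disjoint part to the statement $\liminf_n\norm{Th_n}=0$ for a positive normalized disjoint weakly null sequence $(h_n)$. The genuine gap is the ``transfer'' you describe, from $\norm{Tw_j}\to 0$ on a single block sequence $(w_j)$ to $\norm{H_n}\to 0$ for the disjoint part of $Th_n$. Disjoint homogeneity compares \emph{normalized} disjoint sequences up to basic-sequence equivalence; it gives no mechanism for forcing norms to vanish. Even if the blocks $\sum_{n\in A_j}a_nH_n$ were small, nothing would prevent the individual $\norm{H_n}$ from remaining bounded away from zero, and your ``absorption'' of the equi-integrable part $(G_n)$ via AM-compactness is not justified either: AM-compactness controls $T$ on order intervals of the domain, not the behaviour of sequences already sitting in the range. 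The appeal to \cite{FHST} at the end does not close the gap, since the arguments there address the non-regular setting and proceed along different lines.

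The missing idea is the direct use of regularity, exactly as in the proof of Proposition~\ref{positive DSS Mw Lw} given in the paper. Suppose $\norm{Th_n}\ge\varepsilon$ along a subsequence. Since $T$ is regular, $\abs{Th_n}\le\abs{T}h_n$, and $(\abs{T}h_n)$ is positive and weakly null, hence $\norm{\abs{T}h_n}_{L_1}\to 0$. Kadec-Pe\l czy\'nski's dichotomy then yields a subsequence with $\norm{\abs{T}h_{n_k}-z_k}\to 0$ for some disjoint $(z_k)$, and the domination $\abs{Th_{n_k}}\le\abs{T}h_{n_k}$ forces $(Th_{n_k})$ itself to be a small perturbation of a seminormalized disjoint sequence. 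Now disjoint homogeneity applies legitimately: this disjoint sequence has a subsequence equivalent to a subsequence of $(h_n)$, so $T$ is bounded below on the span of a disjoint sequence, contradicting DSS. Hence $T$ is M-weakly compact, and together with AM-compactness this gives compactness by \cite[Proposition~3.7.4]{MN}. With this route the initial splitting of $(x_n)$ becomes unnecessary.
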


In \cite{FHST} several results in similar spirit were given without the restriction of regularity. An important technique that was exploited in these arguments is the well known Kadec-Pe\l czy\'nski's dichotomy (see \cite{FJT}, \cite{LT2}): given a normalized sequence $(x_n)$ in an order continuous Banach lattice $E$
\begin{enumerate}
    \item either $(\|x_n\|_{L_1})$ is bounded away from zero,
    \item or there exist a subsequence $(x_{n_k})$ and a disjoint sequence $(z_k)$ in $E$ such that $\|z_k-x_{n_k}\|\longrightarrow 0$ as $k\rightarrow\infty$.
\end{enumerate}

An operator $T:E\rightarrow X$ is called M\textit{-weakly compact} if it maps disjoint sequences in $B_E$ to sequences converging to zero. Notice that an operator is compact if and only if it is AM-compact and M-weakly compact (\cite[Proposition 3.7.4]{MN}). There is a notion dual to M-weak compactness, namely, an operator $T:X\rightarrow E$ is L\textit{-weakly compact} if every disjoint sequence in the solid hull of $T(B_X)$ tends to zero in norm. The following fact was given in \cite{Tradacete} for endomorphism on $L_p$ spaces.

\begin{proposition}\label{positive DSS Mw Lw}
Let $E$ be a reflexive DH Banach lattice and $T:E\rightarrow E$ be a positive operator. The following are equivalent:
\begin{enumerate}
\item $T$ is disjointly strictly singular.
\item $T$ is M-weakly compact.
\item $T$ is L-weakly compact.
\end{enumerate}
\end{proposition}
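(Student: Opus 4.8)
The plan is to prove the cycle $(2)\Rightarrow(1)$, $(3)\Rightarrow(1)$, $(1)\Rightarrow(2)$, $(1)\Rightarrow(3)$, with the Kadec--Pe\l czy\'nski dichotomy as the workhorse, and with the standing remark that in a reflexive Banach lattice every bounded disjoint sequence, and the sequence of its moduli, is weakly null (because $E$ and $E^*$ are order continuous). The implications terminating in $(1)$ are the soft ones. If $T$ is not DSS, pick a normalized disjoint $(x_n)$ with $T|_{[x_n]}$ bounded below; then $\norm{Tx_n}\ge\eta>0$, which contradicts M-weak compactness, so $(2)\Rightarrow(1)$. For $(3)\Rightarrow(1)$ I would apply the dichotomy to $(Tx_n)$ relative to a functional $\phi_0\in E^*_+$ that is strictly positive on the band generated by the $x_n$ and $Tx_n$. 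Since $(|x_n|)$ is weakly null, $\langle\phi_0,|Tx_n|\rangle\le\langle T^*\phi_0,|x_n|\rangle\to0$, so the ``$L_1$-large'' alternative cannot occur, and after passing to a subsequence $(Tx_{n_k})$ is, up to a summable perturbation, a disjoint seminormalized sequence $(z_k)$. Projecting each $\abs{Tx_{n_k}}$ onto the band generated by $z_k$ gives a disjoint seminormalized sequence inside the solid hull of $T(B_E)$, contradicting L-weak compactness.

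For $(1)\Rightarrow(2)$, where disjoint homogeneity enters, first reduce (by splitting into positive and negative parts) to a positive normalized disjoint $(x_n)$ with $\norm{Tx_n}\ge\varepsilon$. As before $(x_n)$ is weakly null, $Tx_n\ge0$, and $\langle\phi_0,Tx_n\rangle=\langle T^*\phi_0,x_n\rangle\to0$, so by the dichotomy a subsequence $(Tx_{n_k})$ is a summable perturbation of a disjoint seminormalized sequence $(z_k)$. Now $(x_{n_k})$ and $(z_k)$ are both disjoint, so the DH property yields a further subsequence along which $(x_{n_{k_j}})\overset{C}{\sim}(z_{k_j})$; by the perturbation principle for $1$-unconditional basic sequences, $(Tx_{n_{k_j}})$ is equivalent to $(z_{k_j})$, hence to $(x_{n_{k_j}})$. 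Thus $T$ is an isomorphism on $[x_{n_{k_j}}]$, contradicting that $T$ is DSS.

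Finally $(1)\Rightarrow(3)$ (equivalently $(2)\Rightarrow(3)$). One route is to combine $(1)\!\Leftrightarrow\!(2)$ with the duality between the two notions ($T$ is M-weakly compact iff $T^*$ is L-weakly compact) and the implication $(3)\Rightarrow(1)$ applied to the positive operator $T^*$ on the reflexive lattice $E^*$, transferring the resulting information back. A more direct route mirrors the previous step: if $T$ is not L-weakly compact, a disjoint sequence $(y_n)$ in the solid hull of $T(B_E)$ with $\norm{y_n}\ge\varepsilon$ and $0\le\abs{y_n}\le Tu_n$, $u_n\in B_E$ positive, produces---after applying the band projections onto the bands of the $y_n$, removing the weak limit of $(u_n)$, and using that $T$ is already M-weakly compact by the previous step---a positive bounded sequence on which $T$ stays bounded below, and DH then locates an approximately disjoint subsequence on which $T$ is an isomorphism.

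The step I expect to be the main obstacle is the control of the ``equi-integrable'' alternative of the Kadec--Pe\l czy\'nski dichotomy in the last implication: a bounded sequence whose $L_1$-norms do not vanish and which has no approximately disjoint subsequence (e.g.\ a Rademacher-type sequence) is not killed by the auxiliary functional, so the easy cancellation used in $(1)\Rightarrow(2)$ is unavailable. Ruling it out requires genuinely using reflexivity---no copies of $\ell_1$, and order continuity of $E^*$ forcing the relevant disjoint sequences to be weakly null---together with the subsequence splitting property to peel off the weakly compact part, all the while keeping the estimates coherent with the band projections; it is precisely here, and not in the soft implications, that positivity of $T$ is essential, since for non-positive operators $(2)$ and $(3)$ are already inequivalent on $L_2$.
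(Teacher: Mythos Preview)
Your argument for $(1)\Leftrightarrow(2)$ is correct and is essentially the paper's proof: pass to $\abs{x_n}$, use reflexivity to get weak nullity, use positivity of $T$ to force the $L_1$-large alternative of Kadec--Pe\l czy\'nski to fail, and then invoke DH to make the resulting disjoint sequence equivalent to a subsequence of $(\abs{x_n})$, contradicting DSS.

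Where you diverge is in $(2)\Leftrightarrow(3)$. The paper dispenses with this in a single citation to \cite[Theorem~3.6.17]{MN}: for a \emph{positive} operator between Banach lattices (with the order continuity conditions automatic here by reflexivity), M-weak compactness and L-weak compactness are equivalent. This is a general fact about positive operators and requires neither DH nor any dichotomy argument. Your first route for $(1)\Rightarrow(3)$ via duality does not close without it: from $T$ M-weakly compact you obtain $T^*$ L-weakly compact, and your DH-free implication $(3)\Rightarrow(1)$ on $E^*$ then yields $T^*$ DSS; but converting $T^*$ DSS into $T^*$ M-weakly compact (equivalently, $T$ L-weakly compact) is precisely $(1)\Rightarrow(2)$ applied on $E^*$, and \emph{that} step uses DH---which $E^*$ need not have, as the paper itself shows in Section~\ref{stab-dual}. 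Your ``more direct'' route and the worries in the final paragraph about the equi-integrable alternative and subsequence splitting are therefore addressing a difficulty that evaporates once the Meyer--Nieberg equivalence for positive operators is invoked.
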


\begin{proof}
An M-weakly compact operator is clearly disjointly strictly singular. For the converse implication, suppose that $T$ is not M-weakly compact. Thus, there is a disjoint normalized sequence $(x_n)$ in $E$ such that $\|Tx_n\|_E\geq\alpha>0$.

Observe that $(|x_n|)$ is also a disjoint normalized sequence, and since $E$ is reflexive it must be weakly null. Hence, so is $(T|x_n|)$, and since $T$ is positive it follows that $\|T|x_n|\|_{L_1}\rightarrow0$. Note that
$$
\|T|x_n|\|_E\geq\|Tx_n\|_E\geq\alpha>0,
$$
so by Kadec-Pe\l czy\'nski's dichotomy, $(T|x_n|)$ has a subsequence equivalent to a disjoint sequence in $E$. Since $E$ is DH, $(|x_n|)$ and $(T|x_n|)$ have an equivalent subsequence and $T$ is not DSS. This proves the equivalence of the first two statements. The equivalence with the third one follows from \cite[Theorem 3.6.17]{MN}.
\end{proof}

Recall that an operator between Banach spaces  $T:X\rightarrow Y$ is \textit{Dunford-Pettis} if it maps weakly null sequences to sequences converging to zero. The following result from \cite{FHST} can be seen as an extension of the classical result stating that weakly compact operators on $L_1$ are Dunford-Pettis.

\begin{theorem}\label{1DH-DunfordPettis}
Let $E$ be a 1-DH Banach lattice with finite cotype. Every operator $T\in\mathcal{S}(E)$ is Dunford-Pettis.
\end{theorem}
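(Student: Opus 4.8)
The plan is to show that any $T\in\mathcal{S}(E)$ maps weakly null sequences to norm-null sequences by combining the subsequence splitting property (available since $E$ has finite cotype) with the $1$-DH hypothesis. So let $(x_n)$ be a weakly null sequence in $E$; we want $\|Tx_n\|\to 0$. Arguing by contradiction, pass to a subsequence with $\|Tx_n\|\ge\alpha>0$ and, replacing $(x_n)$ by a seminormalized multiple, assume $(x_n)$ is bounded. By the subsequence splitting property, after a further subsequence write $x_n=g_n+h_n$ with $|g_n|\wedge|h_n|=0$, $(g_n)$ equi-integrable, and $(h_n)$ disjoint.

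The next step is to handle the two pieces separately. For the disjoint part: since $E$ has finite cotype it is order continuous, so the disjoint bounded sequence $(h_n)$ is weakly null; being disjoint and normalized (or seminormalized) along a subsequence, it is a basic sequence, and since $E$ is $1$-DH it has a subsequence equivalent to the unit vector basis of $\ell_1$. But a weakly null sequence equivalent to the $\ell_1$-basis is impossible unless $\|h_n\|\to 0$; hence, discarding the terms where $(h_n)$ is seminormalized, we may assume $\|h_n\|\to 0$, and therefore $\|Th_n\|\to 0$. Thus we are reduced to the equi-integrable part: $(g_n)$ is weakly null (as $x_n-h_n$ with both $x_n$ and $h_n$ weakly null, $h_n$ even norm-null) and equi-integrable, and we must show $\|Tg_n\|\to 0$ to reach the contradiction $\|Tx_n\|\to 0$.

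For the equi-integrable part the idea is that an equi-integrable weakly null sequence in a finite-cotype Banach lattice behaves like a weakly null sequence in an $L_1$-type space, and one should be able to extract from $(g_n)$ a subsequence that is ``almost'' a martingale-difference/Rademacher-type sequence and hence, after a suitable perturbation, spans a subspace on which $T$ — being strictly singular — is small. Concretely, I would use a gliding-hump / Bessaga--Pe\l czy\'nski-type argument: either $(g_n)$ has a norm-null subsequence (done), or it has a seminormalized basic subsequence; equi-integrability prevents this basic subsequence from being equivalent to a disjoint sequence (that would contradict equi-integrability via Kadec--Pe\l czy\'nski), so it generates a subspace not containing disjointly supported copies, and one leverages that $E$ is $1$-DH together with the structure of $[g_n]$ to conclude that $T|_{[g_n]}$ cannot be bounded below, whence after passing to a subsequence $\|Tg_n\|\to 0$.

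The main obstacle is precisely this last step: controlling $T$ on the equi-integrable part. The clean way is to invoke the Kadec--Pe\l czy\'nski dichotomy stated in the excerpt applied to $(Tg_n)$ rather than to $(g_n)$: if $(Tg_n)$ does not tend to zero, pass to a subsequence with $\inf_n\|Tg_n\|>0$; then either $(\|Tg_n\|_{L_1})$ stays bounded away from zero, or $(Tg_n)$ is essentially disjoint. In the essentially-disjoint case, $E$ being $1$-DH forces $(Tg_n)$ (hence a perturbation) to be $\ell_1$-equivalent, while $(g_n)$ being weakly null and $T$ strictly singular should rule this out after comparing with $(g_n)$; in the remaining case one uses that an equi-integrable weakly null sequence with $\|Tg_n\|_{L_1}\not\to 0$ is incompatible with $T$ acting boundedly below only on non-equi-integrable (i.e. essentially disjoint) directions. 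Assembling these cases so that strict singularity of $T$ is genuinely used — rather than merely disjoint strict singularity — is the delicate point, and it is where the $1$-DH hypothesis together with finite cotype must be pushed to its full strength.
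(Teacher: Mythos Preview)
The survey itself does not prove this theorem, merely citing \cite{FHST}; so beyond the tools it highlights just before the statement (the Kadec--Pe\l czy\'nski dichotomy and the subsequence splitting property) there is no detailed argument to compare against. Your outline is in the right spirit, but it contains one genuine error and one genuine gap.

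The error is in the disjoint part. You assert that a bounded disjoint sequence in an order continuous Banach lattice is weakly null; this is false --- the unit vector basis of $\ell_1$ is a counterexample, and the point is precisely that a $1$-DH lattice \emph{does} contain lattice copies of $\ell_1$, so $E^*$ is \emph{not} order continuous and disjoint bounded sequences in $E$ need not be weakly null. The conclusion $\|h_n\|\to 0$ is nevertheless correct, but for a different reason: equi-integrable (almost order bounded) sets in an order continuous Banach lattice are relatively weakly compact, so $(g_n)$ has a weakly convergent subsequence, hence so does $(h_n)=(x_n-g_n)$; a seminormalized subsequence of $(h_n)$ equivalent to the $\ell_1$-basis cannot be weakly convergent, and the contradiction follows.

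The real gap is the equi-integrable part, which you essentially concede. After the reduction you have $(g_n)$ weakly null and equi-integrable, and you need $\|Tg_n\|\to 0$. Your Kadec--Pe\l czy\'nski argument on $(Tg_n)$ correctly disposes of the case where $(Tg_n)$ is a perturbation of a disjoint sequence (that would force an $\ell_1$-subsequence of a weakly null sequence). But the remaining case, $\inf_n\|Tg_n\|_{L_1}>0$, is not handled: you invoke that $T|_{[g_n]}$ is not bounded below, yet strict singularity only yields \emph{blocks} of $(g_n)$ with small $T$-image, not the vectors $g_n$ themselves. Note too that ``equi-integrable and weakly null'' cannot by itself force $\|Tg_n\|\to 0$ --- the Rademacher functions in $L_1$ are equi-integrable, weakly null, and norm one --- so strict singularity must be used in a more substantive way at this point. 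This is exactly where the argument in \cite{FHST} does additional work, and your sketch does not supply it.
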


Using that the composition of a weakly compact with a Dunford-Pettis operator is a compact operator, it follows that every strictly singular operator on a 1-DH Banach lattice with finite cotype has compact square.

Before we present the extension of Milman's result (\cite{Milman}) on compactness of the square of strictly singular endomorphisms on $L_p$ spaces we need the following.

\begin{definition}
A Banach lattice $E$ has property $(C)$ if it is order continuous, and there exist $q<\infty$ and a probability space $(\Omega,\Sigma,\mu)$ such that the inclusions $L_q(\mu)\hookrightarrow E\hookrightarrow L_1(\mu)$ hold.
\end{definition}

Note that condition $(C)$ is a very mild assumption. Indeed, every Banach lattice with a weak order unit (for instance separable) and finite cotype satisfies property $(C)$ (see \cite[p. 14]{JMST}). Moreover, every order continuous rearrangement invariant function space on $[0,1]$ with upper Boyd index $q_X<\infty$ also has property $(C)$ (though it may have trivial cotype, \cite[Proposition 2.b.3]{LT2}). Recall that for an r.i. function space $X$ the {\it Boyd indices} are given by
$$
p_X=\lim_{s\rightarrow\infty}\frac{\log s}{\log\|D_s\|}\hspace{2cm}q_X=\lim_{s\rightarrow0^+}\frac{\log s}{\log\|D_s\|},
$$
where $D_s:X\rightarrow X$ is the dilation operator given by $(D_sf)(t)=f(t/s)$ for $t\leq\min(1,s)$ and zero otherwise (see \cite[Section 2.b]{LT2}).

In \cite{FHST}, the following was proved.

\begin{proposition}\label{DH square AM-compact}
Let $E$ be a DH Banach lattice with property $(C)$. If $T\in\mathcal{S}(E)$ then $T^2$ is AM-compact.
\end{proposition}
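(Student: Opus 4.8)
The plan is to argue by contradiction, in the spirit of Milman's theorem, using the Kadec--Pe\l czy\'nski dichotomy and the DH hypothesis in place of the concrete structure of $L_p$. Suppose $T^2$ is not AM-compact: there are $x\in E_+$ and a sequence $(y_n)$ in the order interval $[-x,x]$ for which $(T^2y_n)$ has no norm-convergent subsequence. Since property $(C)$ includes order continuity, $[-x,x]$ is weakly compact; pass to a subsequence with $y_n\rightharpoonup y\in[-x,x]$ and replace $y_n$ by $y_n-y$, which lies in $[-2x,2x]$ and only translates $T^2y_n$ by a fixed vector, so that $y_n\rightharpoonup 0$, $|y_n|\le 2x$, and, after one more subsequence, $\inf_n\|T^2y_n\|=\delta>0$ (since $T^2y_n\rightharpoonup0$, a norm-convergent subsequence would converge to $0$). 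In particular $(y_n)$, $(Ty_n)$ and $(T^2y_n)$ are seminormalized and weakly null.

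The engine of the proof is a repeated use of the Kadec--Pe\l czy\'nski dichotomy, legitimate because property $(C)$ supplies the representation $E\hookrightarrow L_1(\mu)$. Applied to a seminormalized weakly null sequence $(z_n)$, it gives a subsequence which is either (i) a small perturbation of a disjoint sequence in $E$, or (ii), by its refined form, spans a strongly embedded subspace $Z$, i.e.\ one on which the $E$-norm is equivalent to $\|\cdot\|_{L_1}$ and which, through $L_q(\mu)\hookrightarrow E\hookrightarrow L_1(\mu)$, embeds into $L_r(\mu)$ for some $r>1$ and hence is reflexive. First apply this to $(y_n)$: since $(y_n)$ is order bounded and $E$ is order continuous, $\sup_n\|y_n\chi_A\|\to0$ as $\mu(A)\to0$, so $(y_n)$ cannot be a perturbation of a disjoint sequence --- a disjoint sequence dominated by $2x$ is norm null in an order continuous lattice, and one truncates such a perturbation into $[-2x,2x]$. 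Hence $(y_n)$ spans a reflexive strongly embedded subspace $Z$, and $T|_Z$ is strictly singular and, $Z$ being reflexive, weakly compact.

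Now apply the dichotomy to $(Ty_n)$. If a subsequence is close to a disjoint sequence $(v_k)$, then $(Tv_k)$ is a perturbation of $(T^2y_{n_k})$, hence seminormalized and weakly null; apply the dichotomy once more to $(Tv_k)$. In the disjoint sub-alternative there is a disjoint sequence $(u_j)$ with $\|Tv_{k_j}-u_j\|\to0$ along a rapidly convergent subsequence; since $E$ is DH we may also assume $(v_{k_j})\overset{C}{\sim}(u_j)$, and combining the perturbation with the equivalence shows $T$ is bounded below on $[v_{k_j}]$, contradicting strict singularity. In the other sub-alternative $(Tv_k)$ spans a reflexive strongly embedded subspace $W$, and $(T^2y_{n_k})$ is the image of $(Tv_k)$ under the outer copy of $T$; one argues that this sequence is norm-convergent along a subsequence by exploiting that $[v_k]$ is the span of a disjoint sequence of the DH lattice $E$ while $W$ is a reflexive strongly embedded subspace, so that the Kadec--Pe\l czy\'nski description together with the DH hypothesis pins these subspaces down enough to force compactness of the relevant composition (in the classical examples such as $L_p$, $p\ge2$, $W$ is Hilbertian and a strictly singular operator out of a Hilbert space is compact; at the other extreme, in the $1$-DH case $E$ has the positive Schur property and $T$ is already Dunford--Pettis by Theorem~\ref{1DH-DunfordPettis}). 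The remaining branch, $(Ty_n)$ itself strongly embedded, is handled the same way with $W=[Ty_n]$ and the outer $T$ weakly compact on it. Either way $(T^2y_n)$ has a norm-convergent subsequence, contradicting $\delta>0$.

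The main obstacle is exactly this last step: making precise, and uniform over all DH Banach lattices with property $(C)$, the statement that composing $T$ with a strictly singular operator is compact on the reflexive strongly embedded subspaces produced above. This is where the two hypotheses genuinely cooperate --- property $(C)$ sandwiches $E$ between $L_q(\mu)$ and $L_1(\mu)$ and thereby confines its strongly embedded subspaces, while the DH hypothesis controls its disjoint sequences --- and it is also a plausible reason why one aims for $T^2$ rather than $T$: a single application of $T$ need not overcome the loss of compactness caused by non-Hilbertian strongly embedded subspaces, whereas the second application does.
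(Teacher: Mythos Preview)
The paper is a survey and does not prove this proposition; it only records the statement and cites \cite{FHST}. So there is no ``paper's own proof'' to compare against here, and I can only assess your argument on its merits.

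Your setup is sound. The reduction to a weakly null, order-bounded, seminormalized sequence $(y_n)$ is correct, and your argument that such a sequence cannot fall into the disjoint alternative of the Kadec--Pe\l czy\'nski dichotomy is valid (the truncation $(z_k\vee(-2x))\wedge 2x$ of a disjoint sequence remains disjoint and order bounded, hence norm null by order continuity). The branch where $(Ty_n)$ is perturbed to a disjoint sequence $(v_k)$ and then $(Tv_k)$ is again perturbed to a disjoint sequence is handled correctly via the DH hypothesis.

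The gap, which you candidly flag in your final paragraph, is real and not merely cosmetic. In the branches where $(Ty_n)$ or $(Tv_k)$ spans a strongly embedded subspace you offer no argument, only analogies. Worse, the parenthetical justification you do give is false: it is \emph{not} true that a strictly singular operator out of a Hilbert space is compact --- the formal identity $\ell_2\hookrightarrow\ell_q$ for $q>2$ is strictly singular and not compact. So even in the $L_p$ model, the mechanism you describe for closing the strongly embedded case is wrong as stated; Milman's argument for $L_p$ uses more than that. The $1$-DH shortcut via the positive Schur property is correct but does not generalize.

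Notice also that your sketch uses only the inclusion $E\hookrightarrow L_1(\mu)$ from property $(C)$; the inclusion $L_q(\mu)\hookrightarrow E$ never appears. That second inclusion is not decorative in the hypothesis, and a complete proof along these lines has to bring it in to control the strongly embedded pieces --- this is exactly the leverage you are missing in sub-branches A2 and B. As it stands, the proposal is a correct outline of one branch together with an honest identification of where the real work lies, but it is not a proof.
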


This is a first step in the proof of the next theorem also from \cite{FHST}.

\begin{theorem}\label{sq compact}
Let $E$ be a DH Banach lattice with finite cotype and an unconditional basis. Every operator $T\in \mathcal{S}(E)$ satisfies that the square $T^2$ is compact.
\end{theorem}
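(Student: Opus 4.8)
The plan is to combine Proposition~\ref{DH square AM-compact} (which already gives that $T^2$ is AM-compact) with the complementary fact that $T^2$ is M-weakly compact; since an operator is compact precisely when it is both AM-compact and M-weakly compact (\cite[Proposition 3.7.4]{MN}), this will finish the proof. So the real content is: under the hypotheses (DH, finite cotype, unconditional basis), every $T\in\mathcal S(E)$ has $T^2$ M-weakly compact, i.e. $\|T^2 x_n\|\to 0$ for every disjoint normalized sequence $(x_n)$.

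First I would reduce to positive operators. The unconditional basis makes $E$ a Banach lattice in which every operator is dominated in a suitable sense, but more directly: writing a strictly singular operator as a combination of its components, or simply noting that $|T^2x_n|\le |T|^2|x_n|$ when $T$ is regular, one reduces M-weak compactness of $T^2$ to the positive case; for general $T\in\mathcal S(E)$ on a space with an unconditional basis one can pass through the fact that $\mathcal S(E)$ is well-behaved here. Assuming $T\ge 0$, I would argue by contradiction: suppose $(x_n)$ is disjoint normalized with $\|T^2x_n\|\ge\alpha>0$. Replacing $x_n$ by $|x_n|$ (still disjoint normalized, and $T^2|x_n|\ge |T^2x_n|$ pointwise so still bounded below in norm, using positivity), we may take $x_n\ge 0$. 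Finite cotype forces $E$ order continuous, so $(x_n)$ is weakly null; hence $\|x_n\|_{L_1}\to 0$, and by positivity $\|Tx_n\|_{L_1}\to 0$ and $\|T^2x_n\|_{L_1}\to 0$. Now apply Kadec--Pe\l czy\'nski's dichotomy to $(T^2x_n)$: since its $L_1$-norms go to zero, after passing to a subsequence $(T^2x_n)$ is equivalent to a disjoint sequence $(z_n)$ in $E$. The DH property then gives that $(x_n)$ and $(T^2x_n)$ have equivalent subsequences, so $T^2$ is an isomorphism on $[x_{n_k}]$ for some subsequence. The finishing step is to see that this contradicts $T\in\mathcal S(E)$: since $T^2$ is bounded below on the disjoint span $[x_{n_k}]$, $T$ itself is bounded below on $[Tx_{n_k}]$; using once more that $\|Tx_n\|_{L_1}\to 0$ and Kadec--Pe\l czy\'nski, $(Tx_{n_k})$ has a further subsequence equivalent to a disjoint sequence, on whose span $T$ is an isomorphism, contradicting strict singularity.

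The step I expect to be the main obstacle is the passage from $T^2$ being bounded below on a disjoint span to producing an infinite-dimensional subspace on which $T$ itself is an isomorphism — i.e. converting ``$T^2$ invertible on $[x_{n_k}]$'' into a genuine witness against strict singularity of $T$. The naive chain ``$T^2$ bounded below $\Rightarrow$ $T$ bounded below on the image'' needs care because $(Tx_{n_k})$ need not be disjoint or even seminormalized a priori; one must first check $\|Tx_{n_k}\|\ge c>0$ (which follows since $\|T^2x_{n_k}\|\le\|T\|\,\|Tx_{n_k}\|$) and then invoke the dichotomy again to replace $(Tx_{n_k})$ by a disjoint perturbation, using the already-established $\|Tx_n\|_{L_1}\to 0$. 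Once this normal-form argument is in place the contradiction with $T\in\mathcal S(E)$ is immediate, and combining M-weak compactness with Proposition~\ref{DH square AM-compact} via \cite[Proposition 3.7.4]{MN} yields that $T^2$ is compact.
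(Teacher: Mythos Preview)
Your strategy is the right one and matches the paper: Proposition~\ref{DH square AM-compact} is explicitly called the ``first step'', and combining it with M-weak compactness of $T^2$ via \cite[Proposition~3.7.4]{MN} is the natural second step. The gap is the reduction to positive $T$. An unconditional basis does not force operators to be regular---$L_p$ with the Haar system, $p\ne 2$, already shows this---and the vague appeal to ``$\mathcal S(E)$ being well-behaved'' is not a substitute. Without positivity, the implication $(x_n)$ weakly null $\Rightarrow\|T^2x_n\|_{L_1}\to 0$ fails (weakly null in $E$ gives only weakly null in $L_1$, and the Rademacher functions show this need not imply norm null), so case~(i) of Kadec--Pe\l czy\'nski for $(T^2x_n)$ is not ruled out and your argument stalls. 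There is also a smaller slip: order continuity alone does not make disjoint normalized sequences weakly null ($L_1$ is a counterexample); one must first dispose of the $1$-DH case via Theorem~\ref{1DH-DunfordPettis} and then use that a DH lattice of finite cotype which is not $1$-DH contains neither $c_0$ nor $\ell_1$, hence is reflexive.

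Notice that your argument never invokes the unconditional basis, while the paper explicitly flags it as a technical hypothesis they cannot remove (indeed Theorem~\ref{pDH p>2} dispenses with it when $p\ge 2$, so it is doing real work in the remaining cases). This is exactly where the missing idea lives: one needs the basis to control the weakly null seminormalized sequences $(Tx_n)$ and $(T^2x_n)$ for a general, non-regular $T$---essentially to replace the positivity-driven $L_1$-norm argument by structural information coming from block perturbations. That is the content supplied in \cite{FHST}. By contrast, the step you worried about---passing from $T^2$ bounded below on $[x_{n_k}]$ to a contradiction with $T\in\mathcal S(E)$---is actually routine: $T$ is automatically bounded below on the infinite-dimensional subspace $T[x_{n_k}]$, and no further perturbation of $(Tx_{n_k})$ to a disjoint sequence is needed.
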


The existence of an unconditional basis is a technical condition in the previous proof and indeed not a restriction for many spaces. We conjecture that the result is still true without it. In fact, there are some situations in which there is no need to impose it:

\begin{theorem}\label{pDH p>2}
If $E$ is a $p$-DH Banach lattice ($2\leq p\leq \infty$) with property $(C)$, then every operator $T\in\mathcal{S}(E)$ has compact square.
\end{theorem}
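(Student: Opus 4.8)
The plan is to reduce the statement to Proposition~\ref{DH square AM-compact} by showing that, under the extra hypothesis $p\ge 2$, any strictly singular $T\in\mathcal{S}(E)$ is in fact already $M$-weakly compact, so that $T^2$ being AM-compact upgrades to $T^2$ being compact via the characterization ``compact $=$ AM-compact $+$ M-weakly compact'' (\cite[Proposition 3.7.4]{MN}). First I would record that property $(C)$ forces $E$ to be order continuous and, more importantly, gives a probability space with $L_q(\mu)\hookrightarrow E\hookrightarrow L_1(\mu)$; in particular $E$ has finite cotype, hence the subsequence splitting property and the Kadec--Pe\l czy\'nski dichotomy are available. The key point is that a $p$-DH space with $p\ge 2$ cannot support a disjoint normalized sequence on which $T$ acts as an isomorphism onto another disjoint sequence \emph{that stays bounded away from $L_1$}, because a disjoint sequence equivalent to the $\ell_p$-basis with $p\ge 2$ is weakly null and its image, being again $\ell_p$-like, would have to converge to zero in $L_1$ by the inclusion $E\hookrightarrow L_1$ together with the fact that in $L_1$ a sequence equivalent to the $\ell_p$-basis ($p>1$) is uniformly integrable and weakly null, hence norm-null; for $p=\infty$ the same holds since a $c_0$-basis is weakly null and the $L_1$-norms of a bounded disjoint sequence in an order continuous lattice tend to zero.

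Concretely I would argue by contradiction: suppose $T\in\mathcal{S}(E)$ is not $M$-weakly compact, so there is a disjoint normalized $(x_n)$ in $E$ with $\|Tx_n\|_E\ge\alpha>0$. Replacing $(x_n)$ by $(|x_n|)$ (which is still disjoint, normalized, and now weakly null since $E$ is order continuous with finite cotype and contains no $c_0$ unless $p=\infty$ — the $p=\infty$ subcase I would handle separately below), and using that $E$ is $p$-DH, pass to a subsequence so that $(|x_n|)\overset{C}{\sim}$ the $\ell_p$-basis. Since $E$ is reflexive when $p<\infty$, $(|x_n|)$ is weakly null, hence so is $(T|x_n|)$; by positivity considerations analogous to the proof of Proposition~\ref{positive DSS Mw Lw} (or directly, since $T$ is bounded into $L_1$ and $(T|x_n|)$ is weakly null and, by splitting, up to a small perturbation disjoint in $E$) one gets $\|T|x_n|\|_{L_1}\to 0$. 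But $\|T|x_n|\|_E\ge\|Tx_n\|_E\ge\alpha$ (using $|Tx_n|\le T|x_n|$ from positivity of $T$), so Kadec--Pe\l czy\'nski yields a subsequence of $(T|x_n|)$ equivalent to a disjoint sequence in $E$. Then DH forces $(|x_n|)$ and that disjoint sequence to have equivalent subsequences, so $T$ restricted to $[|x_{n_k}|]$ is an isomorphism, contradicting that $T$ is strictly singular (it is in particular disjointly strictly singular). Hence $T$ is $M$-weakly compact; combined with Proposition~\ref{DH square AM-compact}, $T^2$ is AM-compact and $M$-weakly compact (the latter because $M$-weakly compact operators form an ideal, or directly $T^2$ is $M$-weakly compact since $T$ is), hence $T^2$ is compact.

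For the boundary case $p=\infty$ I would argue slightly differently: then $E$ is an $\infty$-DH lattice with property $(C)$, so every disjoint normalized sequence has a subsequence equivalent to the $c_0$-basis, which is weakly null in the order continuous part; the same Kadec--Pe\l czy\'nski plus DH argument goes through verbatim once one notes that $T$ being strictly singular and $E$ containing no complemented copy of $c_0$-acting-isomorphically on disjoints again contradicts the conclusion, so $T$ is $M$-weakly compact and $T^2$ is compact. Alternatively one observes that an $\infty$-DH lattice with property $(C)$ has, by Proposition~\ref{DH square AM-compact}, $T^2$ AM-compact, and strict singularity of $T$ on a space whose disjoint sequences span $c_0$'s already gives weak compactness of $T$ (no copy of $c_0$ survives), whence $T$ is $M$-weakly compact. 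The main obstacle is the careful handling of the positivity/regularity issue: Proposition~\ref{positive DSS Mw Lw} is stated for \emph{positive} $T$, and here $T$ is a general strictly singular operator, so I must either pass to the modulus (which requires $T$ regular — not assumed) or, preferably, avoid it by working directly with the disjoint sequence $(x_n)$ and using the inclusion $E\hookrightarrow L_1$ to kill the $L_1$-norms of $(Tx_n)$ after the subsequence-splitting decomposition $Tx_n = g_n + h_n$; showing the equi-integrable part $g_n$ is negligible in $E$-norm is where the hypothesis $p\ge 2$ (i.e.\ that the relevant disjoint image sequence is $\ell_p$-like with $p\ge 2$ rather than $\ell_1$-like) does the real work, and that is the step I expect to require the most care.
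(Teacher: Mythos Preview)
The survey does not include its own proof of this theorem (it is quoted from \cite{FHST}), so I can only assess the internal soundness of your argument and point to the step that does not go through.

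Your overall plan --- prove that $T$ is $M$-weakly compact and combine this with Proposition~\ref{DH square AM-compact} via ``compact $=$ AM-compact $+$ $M$-weakly compact'' --- is reasonable and is indeed the route taken in \cite{FHST}. The gap is in the passage from ``$(Tx_n)$ weakly null'' to ``$\|Tx_n\|_{L_1}\to 0$''. In the proof of Proposition~\ref{positive DSS Mw Lw} this step works because $T$ is \emph{positive}: then $T|x_n|\ge 0$ is weakly null, and pairing with the constant function $\mathbf 1\in L_\infty\subset E^*$ gives $\|T|x_n|\|_{L_1}=\langle \mathbf 1, T|x_n|\rangle\to 0$; moreover positivity also gives $\|T|x_n|\|\ge\|Tx_n\|$. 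For a general strictly singular $T$ neither inequality is available, and your suggested repair via subsequence splitting does not close the gap: writing $Tx_{n_k}=g_k+h_k$ with $(g_k)$ equi-integrable and $(h_k)$ disjoint, the equi-integrable part need \emph{not} be negligible in $E$-norm (think of Rademacher-type sequences, which are equi-integrable and norm one). In other words, case~(i) of the Kadec--Pe\l czy\'nski dichotomy --- where $\|Tx_n\|_{L_1}$ stays bounded away from zero --- is precisely the scenario your argument does not rule out. A minor but separate issue: property~$(C)$ does \emph{not} imply finite cotype (for instance $L_{p,\infty}^o[0,1]$ has property~$(C)$ yet every disjoint normalized sequence has a $c_0$-subsequence), so the appeal to the subsequence splitting property needs independent justification when $p=\infty$.

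Case~(i) is exactly where the hypothesis $p\ge 2$ earns its keep, and this is the missing idea. Since $(x_n)\sim\ell_p$ and $T$ is bounded, one always has the upper estimate $\bigl\|\sum a_nTx_n\bigr\|_E\le\|T\|\,\|a\|_{\ell_p}\le\|T\|\,\|a\|_{\ell_2}$ when $p\ge 2$; the point is that in case~(i) the $L_1$-norms of $(Tx_n)$ being bounded below, together with the inclusions $L_q\hookrightarrow E\hookrightarrow L_1$ from property~$(C)$, force a matching \emph{lower} $\ell_2$-estimate on a subsequence of $(Tx_n)$. One then has $(Tx_n)\sim\ell_2$ while $(x_n)\sim\ell_p$, so $T$ would realise a bounded map $\ell_p\to\ell_2$ sending basis to basis --- impossible for $p>2$, and for $p=2$ this makes $T$ an isomorphism on $[x_n]$; either way strict singularity is contradicted. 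Producing that lower $\ell_2$-estimate is the substantive work you have flagged but not carried out.
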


A classical result of J. Calkin \cite{Calkin} states that the only non-trivial closed ideal of operators in Hilbert space is the ideal of compact operators. In particular, as pointed out by T. Kato \cite{Kato}, on Hilbert spaces the ideals of strictly singular and compact operators coincide. In fact, this is a particular case of a more general result involving $2$-DH Banach lattices (\cite[Theorem 2.12]{FHST}):

\begin{theorem}\label{2DH}
If $E$ is a $2$-DH Banach lattice with property $(C)$, then $\mathcal{S}(E)=\mathcal{K}(E)$.
\end{theorem}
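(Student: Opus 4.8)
The plan is to deduce Theorem~\ref{2DH} from the results already assembled for $p$-DH Banach lattices, combining the compact-square statement for $p=2$ with a Calkin-type closedness argument. Recall that we want $\Ss(E)=\K(E)$ for $E$ a $2$-DH Banach lattice with property $(C)$. The inclusion $\K(E)\subset\Ss(E)$ is trivial, so the content is the reverse inclusion. Since, by Theorem~\ref{sq compact}'s companion Theorem~\ref{pDH p>2} (applied with $p=2$), every $T\in\Ss(E)$ has $T^2\in\K(E)$, the problem is reduced to a ``square root'' type statement: if $T$ is strictly singular and $T^2$ is compact, then $T$ itself is compact. This is exactly the step where $2$-DH (as opposed to general $p$-DH) is essential, since for $p\neq 2$ the analogous conclusion fails (e.g.\ in $L_p$ with $p\neq 2$).

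First I would fix $T\in\Ss(E)$ and, arguing by contradiction, suppose $T$ is not compact. Using property $(C)$ and Kadec--Pe\l czy\'nski's dichotomy, I would extract from a suitable non-convergent sequence witnessing non-compactness a normalized sequence on which $T$ behaves ``disjointly'': more precisely, I would produce a normalized disjoint (or almost disjoint, after a small perturbation) sequence $(x_n)$ in $E$ with $\|Tx_n\|\ge\alpha>0$ and such that $(Tx_n)$ is itself equivalent to a disjoint sequence. The point of property $(C)$ here is that strict singularity forces weak nullity of $(x_n)$ and then $\|Tx_n\|_{L_1}\to 0$, so Kadec--Pe\l czy\'nski applies to $(Tx_n)$ exactly as in the proof of Proposition~\ref{positive DSS Mw Lw}. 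Now the $2$-DH hypothesis forces, after passing to a subsequence, that both $(x_n)$ and $(Tx_n)$ are $C$-equivalent to the unit vector basis of $\ell_2$. Thus $T$ restricted to $[x_n]$ is an isomorphism onto $[Tx_n]$, both copies of $\ell_2$, contradicting strict singularity --- unless I have been careless about the perturbations, so the real work is in controlling those.

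The subtlety, and what I expect to be the main obstacle, is that the naive argument above would already show $T$ is \emph{disjointly} strictly singular $\Rightarrow$ not merely $T^2$ but $T$ compact, which is too strong in general; the gap is that non-compactness of $T$ need not be witnessed on a disjoint sequence --- it could be witnessed on an equi-integrable one. So the correct route is: write the relevant sequence via the subsequence splitting property (valid here since property $(C)$ gives $E\hookrightarrow L_1$ and finite-cotype-like behavior, or invoke it directly) as $x_{n_k}=g_k+h_k$ with $(g_k)$ equi-integrable and $(h_k)$ disjoint. The disjoint part is handled as above via $2$-DH. For the equi-integrable part one uses that an equi-integrable sequence is, up to subsequence, weakly convergent with the Dunford--Pettis-type property relative to the $L_1$-topology, so that $T$ acting through the $L_q\hookrightarrow E\hookrightarrow L_1$ sandwich from property $(C)$ collapses it; combined with Proposition~\ref{DH square AM-compact} giving $T^2$ AM-compact, one upgrades AM-compactness of $T^2$ to genuine compactness. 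Concretely I would: (i) show $T$ is M-weakly compact, using $2$-DH exactly as in Proposition~\ref{positive DSS Mw Lw} but now without positivity, reducing to $|x_n|$ only where the lattice structure is needed and otherwise splitting into disjoint and equi-integrable pieces; (ii) show $T$ is AM-compact, using that $T^2$ is AM-compact (Proposition~\ref{DH square AM-compact}) together with the fact that on the sandwiched space an M-weakly compact operator composed appropriately becomes AM-compact; (iii) conclude by the standard fact $\K(E)$ = AM-compact $\cap$ M-weakly compact (\cite[Proposition 3.7.4]{MN}).

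Let me also note the alternative, cleaner high-level packaging I would try first: show that $\Ss(E)$ is a \emph{closed} two-sided ideal (automatic) on which squaring lands in $\K(E)$ (Theorem~\ref{pDH p>2} with $p=2$), and that $\K(E)$ has no proper ideal extension inside $\Ss(E)$ by a spectral/quotient argument --- pass to the Calkin-type algebra $\Ss(E)/\K(E)$, observe every element squares to zero there, and then, because $E$ embeds between $L_q$ and $L_1$, rule out a nonzero nilpotent-of-order-two ideal element by a direct construction of a reducing subspace on which $T$ acts nilpotently, contradicting strict singularity once more via $2$-DH. Whichever packaging one adopts, the crux is the same: the disjoint-sequence analysis only gives information modulo equi-integrable perturbations, and property $(C)$ is precisely the hypothesis that lets one kill those perturbations by factoring through $L_1$. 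I would expect the write-up to spend most of its length on making the subsequence-splitting bookkeeping in step (i)--(ii) precise, while the $2$-DH input itself is a one-line invocation of the definition.
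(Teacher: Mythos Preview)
The survey does not prove this theorem; it merely cites \cite[Theorem~2.12]{FHST}. So there is no in-paper argument to compare against, and your proposal has to stand on its own.

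Your three-step plan (M-weak compactness, AM-compactness, then \cite[Proposition~3.7.4]{MN}) is a sensible scaffold, but two of the supporting arguments you offer do not hold up. First, the subsequence splitting property is not a consequence of property~$(C)$: the paper obtains splitting from finite concavity via \cite{WeisSSP}, and explicitly remarks that property~$(C)$ can hold in r.i.\ spaces of trivial cotype. So your proposed decomposition $x_{n_k}=g_k+h_k$ into equi-integrable and disjoint parts is unlicensed here, and with it your route to step~(i). You \emph{can} salvage part of that step by using the Kadec--Pe\l czy\'nski dichotomy directly instead of splitting, but that still leaves the branch ``$\norm{x_n}_{L_1}$ (respectively $\norm{Tx_n}_{L_1}$) bounded away from zero'' completely unhandled --- and this is exactly where the inclusion $L_q\hookrightarrow E$ from property~$(C)$ has to do real work, which your sketch never puts to any concrete use. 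Second, your step~(ii), deducing AM-compactness of $T$ from AM-compactness of $T^2$ together with M-weak compactness of $T$, is asserted without argument and is not a standard implication; the phrase ``an M-weakly compact operator composed appropriately becomes AM-compact'' does not correspond to any known result and would itself need proof.

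The alternative ``Calkin-type'' packaging is not merely unfinished but incorrect as stated: an algebra in which every element squares to zero need not be zero (strictly upper-triangular $2\times 2$ matrices already give a one-dimensional example), so passing to $\Ss(E)/\K(E)$ and observing that squares vanish there yields nothing by itself. Whatever extra input you envision (``a direct construction of a reducing subspace'') would have to reproduce the missing analytic work above, so this route offers no shortcut.
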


To finish this section, the case of atomic Banach lattices deserves its proper space. Note that in the atomic case  the class of DH Banach lattices $E$  with a basis of disjoint vectors is a rather  small class,  since ``most'' basic sequences in $E$ are equivalent to disjoint sequences. As mentioned earlier the examples include $\ell_p$ spaces and $c_0$, and also Schreier, Baernstein, Tsirelson spaces and their generalizations (cf. \cite{Casazza-Shura}), as well as $\ell_p$-sums of finite dimensional Banach lattices $\ell_p(X_n)$. Also mentioned earlier was that Lorentz and Orlicz sequence spaces (distinct from spaces $\ell_{p}$) are not disjointly homogeneous.

In the atomic setting,  Theorem \ref{sq compact} is improved, similarly to the case of $\ell_p$ spaces where strictly singular and compact endomorphisms  coincide (cf. \cite[p. 76]{LT1}). This is shown in the following result (\cite{FHST})

\begin{theorem}\label{discrete}
Let $E$ be an atomic Banach lattice with a basis. If $E$ is DH then every operator $T\in\mathcal{S}(E)$ is compact.
\end{theorem}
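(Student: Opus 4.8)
The plan is to reduce the general atomic case to the machinery already developed for $2$-DH spaces (Theorem \ref{2DH}) and for the square of strictly singular operators (Theorem \ref{sq compact}), but with an extra ingredient that exploits the discreteness of the lattice. The starting observation is that an atomic Banach lattice with a basis is order continuous precisely when it does not contain a copy of $c_0$ spanned by disjoint vectors; and if $E$ is DH, then by the dichotomy recalled after the statement of Theorem \ref{1DH-DunfordPettis}, either $E$ is $\infty$-DH, or every normalized disjoint sequence has a subsequence equivalent to the unit basis of $\ell_p$ for some fixed $1\le p<\infty$. In the former case, since the basis of disjoint vectors is itself a disjoint sequence, $E$ would be isomorphic as a lattice to $c_0$, where strictly singular and compact endomorphisms coincide; so we may assume $E$ is $p$-DH for some $1\le p<\infty$, and in particular $E$ is order continuous and reflexive when $1<p<\infty$.

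First I would dispose of the case $p=1$. If $E$ is $1$-DH and atomic with a basis, then $E$ has the positive Schur property, hence finite cotype fails only if $E$ contains $\ell_\infty^n$ uniformly — but this is incompatible with the positive Schur property, so $E$ has finite cotype and Theorem \ref{1DH-DunfordPettis} applies: every $T\in\mathcal S(E)$ is Dunford--Pettis. In an atomic $1$-DH lattice every strictly singular operator is also weakly compact (a non-weakly-compact operator would be invertible on a copy of $\ell_1$ spanned by a disjoint sequence, contradicting strict singularity since $E$ is $1$-DH and all disjoint sequences span $\ell_1$); being simultaneously weakly compact and Dunford--Pettis, $T$ is compact. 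So from now on $1<p<\infty$ and $E$ is reflexive.

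Next, for $1<p<\infty$, property $(C)$ need not hold abstractly for a general atomic lattice, so I would not invoke Theorem \ref{sq compact} or \ref{2DH} as a black box but rather rerun their argument in the discrete setting, where the role of property $(C)$ is played by the Kadec--Pe\l czy\'nski dichotomy directly on the basis. The key step is: given $T\in\mathcal S(E)$, show that the image under $T$ of the unit basis $(e_n)$ (a disjoint sequence) has a subsequence converging to zero in norm — equivalently, $T$ is M-weakly compact. If not, $(Te_n)$ is bounded below; since $E$ is reflexive, $(e_n)$ is weakly null, so $(Te_n)$ is weakly null and bounded below, and Kadec--Pe\l czy\'nski gives a subsequence equivalent to a disjoint sequence, which (by $p$-DH) is equivalent to the unit basis of $\ell_p$, hence equivalent to $(e_n)$ itself; then $T$ is bounded below on $[e_n]$, contradicting strict singularity. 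Thus $T$ is M-weakly compact. The remaining point is that a strictly singular endomorphism of a reflexive $p$-DH atomic lattice with a basis is also AM-compact: an order interval $[-x,x]$ is, up to small perturbation, contained in the closed span of finitely many atoms plus a tail where the basis coefficients are small, and one argues that the failure of relative compactness of $T([-x,x])$ would again produce, via Kadec--Pe\l czy\'nski and $p$-DH, a disjoint sequence on which $T$ is bounded below. Since an operator that is both M-weakly compact and AM-compact is compact (\cite[Proposition 3.7.4]{MN}), $T$ is compact.

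The main obstacle I anticipate is the AM-compactness step in the reflexive case without property $(C)$: in a general atomic lattice, order intervals can be genuinely infinite-dimensional (e.g. when the basis is not shrinking in a strong quantitative sense), and controlling the tail behaviour of $T$ on $[-x,x]$ requires care. The clean way around it is probably to observe that in a reflexive atomic lattice with a basis, order intervals are weakly compact, so $T([-x,x])$ is already relatively weakly compact for any bounded $T$; then AM-compactness follows once one knows $T$ is, say, Dunford--Pettis on these intervals, and that in turn can be extracted from the $p$-DH hypothesis by the same disjointification argument used above. Assembling these pieces — the $c_0$ case, the $\ell_1$ case, and the reflexive $1<p<\infty$ case via M-weak plus AM-compactness — yields that every $T\in\mathcal S(E)$ is compact.
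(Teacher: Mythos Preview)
The fundamental gap is in your opening case split. You assert that a DH Banach lattice is either $\infty$-DH or $p$-DH for some fixed $1\le p<\infty$, invoking a ``dichotomy recalled after the statement of Theorem~\ref{1DH-DunfordPettis}''. No such dichotomy appears in the paper, and the assertion is false: Tsirelson space, explicitly discussed in Section~\ref{definitions and examples}, is an atomic DH Banach lattice with a basis that is \emph{not} $p$-DH for any $1\le p\le\infty$. Since your entire argument is organized around the three cases $p=\infty$, $p=1$, and $1<p<\infty$, it simply does not cover the theorem as stated; Tsirelson-type spaces fall through every case.

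The paper is a survey and does not reproduce the proof here (it cites \cite{FHST}), but the intended argument does not pass through any $p$-DH reduction. The decisive feature of the atomic setting is that every seminormalized weakly null sequence has, by a standard gliding-hump argument against the unconditional basis of atoms, a subsequence equivalent to a block---hence disjoint---sequence. So if $T\in\mathcal S(E)$ fails to be compact, one produces a normalized disjoint sequence $(x_n)$ with $(Tx_n)$ seminormalized and, after a further subsequence, itself equivalent to a disjoint sequence; the DH hypothesis alone (no $\ell_p$ in sight) then forces $(x_{n_k})\sim(Tx_{n_k})$, so $T$ is bounded below on $[x_{n_k}]$, contradicting strict singularity. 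The non-reflexive situations (which, as you correctly observe, must be $1$-DH or $\infty$-DH) only require a preliminary step to manufacture the weakly null sequence; the heart of the proof is this single DH comparison, not a trichotomy on $p$. As a side remark, your AM-compactness discussion is more elaborate than necessary: in an order continuous atomic Banach lattice every order interval is already norm-compact, so AM-compactness of any bounded operator is automatic and the whole burden lies on the M-weakly compact side.
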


\section{Applications to operators on rearrangement invariant spaces}\label{section lorentz}

In the class of rearrangement invariant spaces it happens that the behaviour of powers of endomorphisms determines the behaviour of the composition of (different) operators. This is the content of the next result (see \cite{FHST} for details).

\begin{proposition}
Given a rearrangement invariant space $E$ on $[0,1]$ and $n\in\mathbb{N}$ the following statements are equivalent:
\begin{enumerate}
\item If an operator $T\in\mathcal{S}(E)$, then the power $T^n$ is compact.
\item If $T_1,\ldots, T_n$ belong to $\mathcal{S}(E)$, then the composition $T_n\ldots T_1$ is compact.
\end{enumerate}
\end{proposition}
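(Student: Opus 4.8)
The plan is to prove the nontrivial implication $(1)\Rightarrow(2)$; the converse is immediate on specializing $T_1=\cdots=T_n=T$.

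The first step is to decompose $E=E[0,1]$ into $n$ congruent bands. Split $[0,1]$ into consecutive intervals $I_1,\dots,I_n$ of length $1/n$, let $E_j\subseteq E$ be the band of functions supported on $I_j$, and let $P_j\in\mathcal L(E)$ be the contractive positive band projection $P_jf=f\chi_{I_j}$, so that $E=E_1\oplus\cdots\oplus E_n$. The key preliminary fact is that each $E_j$ is isomorphic to $E$ itself. To get this I would use the increasing affine bijection $\phi_j\colon[0,1]\to I_j$ and the substitution operator $\theta_j\colon E\to E_j$ given by $\theta_j g=(g\circ\phi_j^{-1})\chi_{I_j}$, which is onto; comparing distribution functions yields $(\theta_j g)^*=D_{1/n}(g^*)$ and $(\theta_j^{-1}f)^*=D_n(f^*)$, where $D_s$ denotes the dilation operator. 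Since the dilations $D_n$ and $D_{1/n}$ are bounded on every rearrangement invariant space on $[0,1]$ (see \cite[Section 2.b]{LT2}), $\theta_j$ is an onto isomorphism with $\theta_j^{-1}\colon E_j\to E$ bounded.

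Next I would amalgamate the given operators into a single operator on $E$ that cyclically shifts the bands while applying the $T_j$ along the way:
\begin{displaymath}
  S=\sum_{j=1}^n\theta_{j+1}\,T_j\,\theta_j^{-1}\,P_j\in\mathcal L(E),
\end{displaymath}
with indices read modulo $n$ (so $\theta_{n+1}:=\theta_1$); thus $S$ carries $E_j$ into $E_{j+1}$ via $\theta_{j+1}T_j\theta_j^{-1}$ and annihilates the other bands. Since $\mathcal S(E)$ is a two-sided operator ideal containing each $T_j$ (cf.\ \cite{Kato}), every summand is strictly singular, hence so is the finite sum $S$; applying hypothesis $(1)$ to $S$ gives that $S^n$ is compact. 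I would then evaluate $S^n$ on the $S^n$-invariant subspace $E_1$: iterating $S$ and cancelling the identities $\theta_k^{-1}\theta_k=\mathrm{Id}_E$ at each stage leaves
\begin{displaymath}
  S^n|_{E_1}=\theta_1\,(T_nT_{n-1}\cdots T_1)\,\theta_1^{-1}.
\end{displaymath}
Because the restriction of the compact operator $S^n$ to the closed subspace $E_1$ is compact and $\theta_1$ is an isomorphism of $E$ onto $E_1$, it follows that $T_n\cdots T_1=\theta_1^{-1}(S^n|_{E_1})\theta_1$ is compact, which is $(2)$.

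The computational parts --- the rearrangement bookkeeping for the $\theta_j$ and the verification that $S^n$ permutes and composes the bands as stated --- are routine. The one genuinely substantive point, and the only place where rearrangement invariance on $[0,1]$ enters, is the identification $E_j\cong E$ of each band with the whole space; this is what allows a single strictly singular operator $S$ on $E$ to encode the composition $T_n\cdots T_1$, and it would fail for a general Banach lattice, where a band need not be isomorphic to the ambient space.
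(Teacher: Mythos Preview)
Your argument is correct. The paper itself does not include a proof of this proposition; it simply refers the reader to \cite{FHST} for details. What you have written is the standard approach: use the boundedness of the dilation operators $D_n$ and $D_{1/n}$ on any r.i.\ space on $[0,1]$ to identify each band $E_j$ with $E$, so that $E\cong E^n$, and then encode the tuple $(T_1,\dots,T_n)$ as a single strictly singular ``cyclic shift'' operator $S$ whose $n$-th power recovers $T_n\cdots T_1$ up to conjugation by an isomorphism. This is exactly the matrix trick used in \cite{FHST} (and, in a more abstract form, in \cite{ADST} for any Banach space $X$ with $X\cong X\oplus X$); your identification of the one substantive point---that rearrangement invariance is used only to get $E_j\cong E$---is spot on.
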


As a consequence of Theorems \ref{1DH-DunfordPettis}, \ref{sq compact} and \ref{2DH} we get the following:

\begin{proposition}
Given $1<p<\infty$ and $1\leq q<\infty$, every operator $T\in\mathcal{S}(\Lambda(W,q)[0,1])$ or $T\in \mathcal{S}(L_{p,q}[0,1])$ has a compact square. Moreover, if $q=2$ then $T$ is already compact, while if $q=1$, then $T$ is Dunford-Pettis.
\end{proposition}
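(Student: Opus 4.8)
The plan is to deduce everything from the structural facts about the Lorentz spaces $\Lambda(W,q)[0,1]$ and $L_{p,q}[0,1]$ collected earlier, together with the compactness theorems of Section~\ref{section compact squares}. First I would recall, via Proposition~\ref{CarothersDilworth}, that for $1\le q<\infty$ both $\Lambda(W,q)[0,1]$ and $L_{p,q}[0,1]$ are $q$-DH: every normalized disjoint sequence has a subsequence $(1+\varepsilon)$-equivalent to the unit vector basis of $\ell_q$. In particular these spaces are DH. Next I would check that they satisfy the hypotheses needed to invoke Theorem~\ref{sq compact}: they are rearrangement invariant function spaces on $[0,1]$, hence have an unconditional (indeed $1$-symmetric-type) basis structure in the relevant sense, and they have finite cotype. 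For $\Lambda(W,q)$ finite cotype follows since it is $q$-concave (it embeds order-isometrically into an $L_q$-type construction via the weight), and for $L_{p,q}$ with $1<p<\infty$, $1\le q<\infty$, it is known to have finite cotype (its Boyd indices are $p_X=q_X=p$, and an r.i. space with nontrivial Boyd indices has finite cotype). Applying Theorem~\ref{sq compact} then yields that every $T\in\mathcal{S}(E)$ has compact square, for $E$ either of these spaces; this is the bulk of the claim.

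Second, for the case $q=2$ I would invoke Theorem~\ref{2DH}: since $\Lambda(W,2)[0,1]$ and $L_{p,2}[0,1]$ are $2$-DH and satisfy property $(C)$ (being order continuous r.i. spaces on $[0,1]$ with finite upper Boyd index, as noted in the remarks following the definition of property~$(C)$), we conclude $\mathcal{S}(E)=\mathcal{K}(E)$, so any strictly singular $T$ is already compact.

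Third, for the case $q=1$ I would use Theorem~\ref{1DH-DunfordPettis}: the spaces $\Lambda(W,1)[0,1]$ and $L_{p,1}[0,1]$ are $1$-DH and have finite cotype, hence every $T\in\mathcal{S}(E)$ is Dunford--Pettis. (One could additionally remark that combining this with the $q=1$ instance of the square-compactness statement recovers that $T^2$ is compact in this case too, via the composition of a weakly compact and a Dunford--Pettis operator, but that is already subsumed.)

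The main obstacle, such as it is, is purely bookkeeping: one must verify that each concrete space genuinely meets the technical hypotheses of the three theorems being cited — in particular, finite cotype for $\Lambda(W,q)$ and $L_{p,q}$, the existence of an unconditional basis (or the reduction allowing Theorem~\ref{sq compact} to apply), and property~$(C)$ for the $q=2$ case. None of these is deep: finite cotype comes from $q$-concavity of $\Lambda(W,q)$ and from the nontrivial Boyd indices of $L_{p,q}$; property~$(C)$ follows from order continuity plus finite upper Boyd index as recorded after the definition; and the Haar system provides the requisite unconditional structure in r.i.\ spaces with nontrivial Boyd indices. Once these routine checks are in place, the statement is an immediate corollary of Theorems~\ref{1DH-DunfordPettis}, \ref{sq compact} and~\ref{2DH}.
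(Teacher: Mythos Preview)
Your approach is correct and matches the paper's own argument exactly: the proposition is recorded there simply as an immediate consequence of Theorems~\ref{1DH-DunfordPettis}, \ref{sq compact} and~\ref{2DH}, once one knows (via Proposition~\ref{CarothersDilworth}) that these Lorentz spaces are $q$-DH. One small correction in your bookkeeping: nontrivial Boyd indices do \emph{not} in general imply finite cotype --- the paper itself observes, right after the definition of property~$(C)$, that an r.i.\ space with finite upper Boyd index ``may have trivial cotype''; the finite cotype of $L_{p,q}[0,1]$ for $1<p<\infty$, $1\le q<\infty$ is true but needs a direct argument (for instance via its $\max(p,q)$-concavity).
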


By contrast, if $q\neq 2$, then strictly singular non-compact operators on $L_{p,q}$ can be found. Take for instance a complemented subspace isomorphic to $\ell_q$ and the span of the Rademacher functions which is isomorphic to $\ell_2$. Denote $P_1:L_{p,q}\rightarrow \ell_q$ and $P_2:L_{p,q}\rightarrow \ell_2$  the corresponding projections, $i_{s,t}:\ell_s\rightarrow \ell_t$  the canonical inclusion, and $Q:\ell_q\hookrightarrow L_{p,q}$ and $R:\ell_2\hookrightarrow L_{p,q}$  the corresponding embeddings. When $q<2$   consider $T=Ri_{q,2}P_1\in
\mathcal{S}(L_{p,q})\backslash\mathcal{K}(L_{p,q})$, and when $q>2$ take  $S=Qi_{2,q}P_2\in \mathcal{S}(L_{p,q})\backslash\mathcal{K}(L_{p,q})$.

The behavior of the maximal Lorentz spaces  $L_{p,\infty}$ is quite different. In fact, there exists an operator $T\in \mathcal{S}(L_{p,\infty})$, for $p\neq 2$, whose cube $T^3$ is not compact. The proof is based on a particular  way of embedding  $\ell_p$ as a complented subspace into $L_{p,\infty}$ (notice that for $p<2$  even $L_p$ can be embedded as a complemented subspace of $L_{p,\infty}$, see \cite{KaltonL0}), and the fact that $\ell_{p,\infty}$ embeds as a complemented sublattice into $L_{p,\infty}$ \cite{Leung}. See \cite[Proposition 3.3]{FHST} for details.

Similarly  strictly singular operators on $L_{2,\infty}$ with non-compact squares can be defined. However, we do not know whether there might exist some $n\in\mathbb{N}$ such that $T^n$ is compact whenever $T\in \mathcal{S}(L_{p,\infty})$, or even whether every operator $T\in\mathcal{S}(L_{p,\infty})$ is power-compact.

Observe also that  if $L_{p,\infty}^o$ denotes the order continuous part of $L_{p,\infty}$, then every strictly singular operator on $L_{p,\infty}^o$ has compact square. This follows from Theorem \ref{pDH p>2}, since $L_{p,\infty}^o$ is $\infty$-DH and his upper Boyd index equals $p$. A similar statement also holds for order continuous Marcinkiewicz spaces $M(\varphi)$ with finite upper Boyd index (since they are also $\infty$-DH Banach lattices, cf. \cite{semenov}).

Note  however  that these results do not hold for Lorentz spaces $L_{p,q}(0,\infty)$ (for $p\neq q$) as they contain complemented lattice copies of the non-DH spaces $\ell_{p,q}$.

In the case of Orlicz spaces over a probability measure space $L_\varphi$ is DH if and only if $E_\varphi^\infty\cong\{t^p\}$ (Theorem \ref{DH-Orlicz}). This condition  implies the equality of the indices $s(L_\varphi)=\sigma(L_\varphi)=p,$ or equivalently the equality of the associated Boyd indices $p_{L_\varphi}$ and $q_{L_\varphi}$ as it follows from the identities $s(L_\varphi)=p_{L_\varphi}$ and $\sigma(L_\varphi)=q_{L_\varphi}$  (cf. \cite[p. 139]{LT2}). Thus, from Theorems \ref{1DH-DunfordPettis} and \ref{sq compact} the following result is obtained.

\begin{proposition}
Let $\varphi$ be an Orlicz function such that $E_{\varphi}^\infty\cong\{t^p\}$, for some $1\leq p<\infty$. If an operator $T\in \mathcal{S}(L_\varphi[0,1])$ then the square $T^2$ is compact. Furthermore for $p=2$, the operator $T$ is already compact, while for $p=1$, $T$ is Dunford-Pettis.
\end{proposition}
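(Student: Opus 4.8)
The plan is to check that $L_\varphi[0,1]$ meets the hypotheses of the compactness results of Section~\ref{section compact squares} and then to distinguish the regimes $1<p<\infty$, $p=1$ and $p=2$. The only genuine inputs are Theorems~\ref{DH-Orlicz}, \ref{sq compact}, \ref{1DH-DunfordPettis} and \ref{2DH}; the rest is bookkeeping, so the ``hard part'' is really just matching each value of $p$ to the theorem whose hypotheses it actually satisfies.

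First I would record the three preliminary facts. (a) By Theorem~\ref{DH-Orlicz}(1) the hypothesis $E_\varphi^\infty\cong\{t^p\}$ says precisely that $L_\varphi[0,1]$ is DH, and in fact $p$-DH. (b) The space $L_\varphi[0,1]$ has finite cotype: it is a separable Orlicz space over a probability space, hence order continuous, and were it to contain $\ell_\infty^n$ uniformly it would contain a disjoint sequence equivalent to the unit vector basis of $c_0$ (cf.\ \cite{LT2}), which is impossible in a $p$-DH space with $p<\infty$. (c) As recalled in the paragraph preceding the statement, $E_\varphi^\infty\cong\{t^p\}$ forces $p_{L_\varphi}=q_{L_\varphi}=p$; hence for $p<\infty$ the upper Boyd index is finite, so $L_\varphi[0,1]$ has property $(C)$, and for $1<p<\infty$ the Boyd indices are nontrivial, so the Haar system is an unconditional basis of $L_\varphi[0,1]$.

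With these in hand the statement is assembled case by case. For $1<p<\infty$, the space is DH with finite cotype and an unconditional basis, so Theorem~\ref{sq compact} gives that $T^2$ is compact for every $T\in\mathcal{S}(L_\varphi[0,1])$ (for $2\le p<\infty$ one could equally invoke Theorem~\ref{pDH p>2} and avoid the basis). For $p=1$, the space is $1$-DH with finite cotype, so Theorem~\ref{1DH-DunfordPettis} applies directly: $T$ is Dunford--Pettis, and, by the remark following that theorem, $T^2$ is then compact. Together these cover the square-compactness claim for all $1\le p<\infty$ together with the Dunford--Pettis assertion at $p=1$. Finally, for $p=2$ the space is $2$-DH with property $(C)$, so Theorem~\ref{2DH} yields the sharper conclusion $\mathcal{S}(L_\varphi[0,1])=\mathcal{K}(L_\varphi[0,1])$, i.e.\ $T$ itself is compact.

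The point that must not be glossed over is that Theorem~\ref{sq compact} requires an unconditional basis, which the model case $L_1[0,1]$ (here $p=1$) lacks altogether; this is why the case $p=1$ has to be routed through Theorem~\ref{1DH-DunfordPettis} rather than Theorem~\ref{sq compact}, and why fact (b) above (finite cotype, needed in \emph{every} case) is established through $p$-DH-ness and order continuity rather than through the Boyd indices alone — indeed finiteness of $q_X$ does not in general imply finite cotype for r.i.\ spaces, as the $\infty$-DH Marcinkiewicz examples show.
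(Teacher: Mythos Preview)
Your proof is correct and follows essentially the same route as the paper, which simply records that the proposition is obtained from Theorems~\ref{1DH-DunfordPettis} and~\ref{sq compact}. You are more explicit: you verify the standing hypotheses (DH via Theorem~\ref{DH-Orlicz}, finite cotype, the Haar basis being unconditional when $1<p<\infty$), you separate out the case $p=1$ where Theorem~\ref{sq compact} is unavailable for lack of an unconditional basis, and you invoke Theorem~\ref{2DH} for the $p=2$ assertion, which the paper's one-line lead-in does not cite.
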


Many Orlicz functions satisfy the condition $E_\varphi^\infty\cong\{t^p\}$, for example the class of all Orlicz functions of regular variation, i.e.  $$\lim_{t\rightarrow\infty}\frac{t\varphi'(t)}{\varphi(t)}=p.$$ In general, we cannot weaken this condition on $E_\varphi^\infty$, as there exist Orlicz spaces $L_\varphi$ with indices $s(L_\varphi)=\sigma(L_\varphi)=p$, and an operator $T\in\mathcal{S}(L_\varphi)$ whose square $T^2$ is not compact (while for $p=2$, we have a strictly singular non-compact operator, see \cite[Proposition 4.3]{FHST}).

Clearly these Orlicz spaces  are not disjointly homogeneous (this follows from Theorem \ref{sq compact}). More generally, every {\it minimal} Orlicz function space $L_{\varphi}$ (different from $L_{p}$) is not disjointly homogeneous. Indeed,  recall that in general for each $\psi \in C_{\varphi}^{\infty}$ there exists a sequence of normalized disjoint functions in $L_{\varphi}$ equivalent to the symmetric canonical basis of $\ell_{\psi}$ (\cite[Proposition 4]{LT3}). Now, since $\varphi$ is minimal, we have, by \cite[Proposition 1]{Hernandez-Peirats}, that $E_{\varphi,1}^{\infty} = E_{\varphi}^{\infty}=E_{\varphi}=E_{\varphi,1}$ and the set $E_{\varphi,1}^{\infty}$ contains uncountable many mutually
non-equivalent Orlicz functions (see the proof of \cite[Theorem 4.b.9]{LT1}). Hence, using the symmetry, we deduce that in $L_{\varphi}$ there are uncountable many sequences of normalized disjoint functions with no equivalent subsequence.

Notice also that in the class of Orlicz spaces $L_\varphi$ with different indices ($s(L_\varphi)\neq\sigma(L_\varphi)$) there are no DH spaces. This follows from the fact that for each $p\in[s(L_\varphi),\sigma(L_\varphi)]$ we have $t^{p}\in C_{\varphi}^{\infty}$ and there exist sequences of normalized disjoint functions in $L_{\varphi}$ that are equivalent to the canonical basis of $\ell_{p}$ (\cite[Proposition 4]{LT3}).

New examples of DH r.i. spaces and its connection with interpolation theory can be found also in the recent paper \cite{Astashkin}.

\section{The Kato property in rearrangement invariant spaces}\label{section kato}

 As mentioned above the ideals of strictly singular and compact operators coincide on  Hilbert spaces as well as on 2-DH  function spaces (under  some mild  assumptions).

In this section we consider the natural converse question:
Assume that  $E$ is  an r.i. function space  such that every strictly singular operator \,$T\in\LL(E)$ is compact. Must $E$ be 2-DH?

This question has been addressed in \cite{HST}. First, let us introduce the following

\begin{definition}
A Banach space $X$ has the \textsl{Kato property} whenever $\Ss(X)=\K(X).$
\end{definition}

Examples of function spaces with the Kato property clearly include Hilbert spaces, Lorentz spaces of the form \,$L_{p,2}[0,1]$\, and \,$\Lambda(W,2)[0,1]$\, as well as certain  Orlicz spaces \,$L_\varphi[0,1]$ \, like  $\varphi(t)=t^2\log^\alpha(1+t)$ for arbitrary $\alpha$. The  Kato property is also enjoyed by  the sequence spaces  $\ell_p$ ($1\leq p<\infty $)\, $c_0$, the Tsirelson space (and some of its modifications) and also some new  spaces such as the space $\mathcal{X}_{AH}$\, (constructed in \cite{Argyros-Haydon} as a solution to the scalar-plus-compact problem).
Notice that the Kato property is an isomorphic property. Moreover, we have the following:

\begin{proposition}
Let $X$ be a Banach space with the Kato property. Suppose that for some subspace $Y\subset X$, there is $Z\subset X$ such that $Y\simeq X/Z$, then $Y$ also has the Kato property. In particular, every complemented subspace of a space with the Kato property also has the Kato property.
\end{proposition}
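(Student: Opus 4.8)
The plan is to lift a strictly singular operator from $Y$ up to $X$, invoke the Kato property there, and then transport compactness back down to $Y$. First I would fix the inclusion map $i\colon Y\hookrightarrow X$, an isomorphism $j\colon X/Z\to Y$, and the canonical quotient map $q\colon X\to X/Z$, and set $p=j\circ q\colon X\to Y$, which is a bounded linear surjection.

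Next, given $T\in\Ss(Y)$, I would form the operator $S=i\,T\,p\in\LL(X)$. The first key step is that $S$ is strictly singular; this is immediate from the fact that the class of strictly singular operators is a two-sided operator ideal, so pre- and post-composing the strictly singular operator $T$ with the bounded operators $p$ and $i$ preserves strict singularity. Since $X$ has the Kato property, $S$ is therefore compact.

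The second key step recovers compactness of $T$ from that of $S$. By the open mapping theorem applied to the surjection $p$, there is $c>0$ such that every $y\in Y$ admits a preimage $x\in X$ with $p(x)=y$ and $\norm{x}\le c\norm{y}$; hence $i\,T(B_Y)\subseteq S(c\,B_X)$, which is relatively compact in $X$. Since $Y$ is closed in $X$, this forces $T$ to be compact, so $Y$ has the Kato property. The ``in particular'' is then immediate: if $Y$ is complemented in $X$ with complement $Z$, then $Y$ is at once a subspace of $X$ and isomorphic to the quotient $X/Z$.

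The argument is essentially soft, and I do not expect a genuine obstacle: the only points needing care are the citation of the operator-ideal property of $\Ss$ and the uniform lifting furnished by the open mapping theorem, while the rest is routine bookkeeping with $i$, $j$ and $q$.
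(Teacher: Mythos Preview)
Your argument is correct. The paper, being a survey, states this proposition without supplying a proof, so there is no in-paper argument to compare against; the route you take---lifting $T$ to $S=i\,T\,p$ via the inclusion and the quotient-induced surjection, invoking the two-sided ideal property of $\Ss$ to get $S\in\Ss(X)=\K(X)$, and then pushing compactness back down using the open mapping theorem and the closedness of $Y$---is the natural one and is almost certainly what the authors have in mind.
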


A  weaker version of the 2-DH  property is the following :

\begin{definition}
An  r.i. space on $[0,1]$ is  \emph{restricted 2-DH} if for every sequence of disjoint sets $(A_n)_{n=1}^\infty$ in $[0,1]$ there is a subsequence such that  $(\frac{1}{\|\chi_{A_{n_k}}\|}\chi_{A_{n_k}})_{k=1}^\infty$ is equivalent to the unit vector basis of $\ell_2$.
\end{definition}

\begin{proposition}\label{weak2DH-HaarDH}
Let $E$ be an r.i. space on $[0,1]$. The space $E$ is restricted 2-DH if and only if every subsequence of disjoint elements of the normalized Haar basis in $E$ has a further subsequence equivalent to the unit vector basis of $\ell_2$.
\end{proposition}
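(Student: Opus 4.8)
The plan is to prove the two implications separately, both times factoring through the characteristic functions of dyadic intervals. The key elementary observation is that if $h_I$ denotes the Haar function supported on a dyadic interval $I=I^+\cup I^-$, so that $h_I=\chi_{I^+}-\chi_{I^-}$, then $\abs{h_I}=\chi_I$; consequently a \emph{disjoint} subsequence of the Haar basis is nothing but a sequence $(h_{I_n})$ indexed by pairwise disjoint dyadic intervals $(I_n)$, and for such a sequence disjointness yields $\bignorm{\sum_n c_nh_{I_n}}=\bignorm{\sum_n\abs{c_n}\chi_{I_n}}=\bignorm{\sum_n c_n\chi_{I_n}}$ together with $\norm{h_{I_n}}=\norm{\chi_{I_n}}$. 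Thus, as far as equivalence of basic sequences is concerned (in particular equivalence to the $\ell_2$ basis), the normalized disjoint Haar sequence $(h_{I_n}/\norm{h_{I_n}})$ is indistinguishable from $(\chi_{I_n}/\norm{\chi_{I_n}})$. With this in hand the implication ``restricted $2$-DH $\Rightarrow$ Haar condition'' is immediate: given a disjoint subsequence $(h_{I_n}/\norm{h_{I_n}})$ of the normalized Haar basis, apply the definition of restricted $2$-DH to the disjoint sets $(I_n)$ to obtain a subsequence with $(\chi_{I_{n_k}}/\norm{\chi_{I_{n_k}}})$ equivalent to the unit vector basis of $\ell_2$, and transfer the equivalence back to $(h_{I_{n_k}}/\norm{h_{I_{n_k}}})$.

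For the converse I would start from an arbitrary sequence $(A_n)$ of disjoint sets of positive measure and reduce it to disjoint dyadic intervals. The main tool is rearrangement invariance in the form that equimeasurable functions have equal norm: since the $A_n$ are disjoint, $\abs{\sum_n c_n\chi_{A_n}}=\sum_n\abs{c_n}\chi_{A_n}$, whose distribution depends only on the data $\{(\abs{c_n},\abs{A_n})\}_n$, so $(\chi_{A_n})$ may be replaced by $(\chi_{B_n})$ for any disjoint family with $\abs{B_n}=\abs{A_n}$. Because $\sum_n\abs{A_n}\le1$ forces $\abs{A_n}\to0$, after passing to a subsequence I may assume $\abs{A_n}$ is non-increasing with $\sum_n\abs{A_n}\le\tfrac12$, and I choose dyadic intervals $I_n$ with $\abs{A_n}\le\abs{I_n}<2\abs{A_n}$; passing to a further subsequence the levels of the $I_n$ may be taken non-decreasing, so the $I_n$ can be placed pairwise disjoint by greedy left-alignment and satisfy $\sum_n\abs{I_n}\le1$. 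Fitting a measure-preserving copy of each $A_n$ inside $I_n$ gives $\bignorm{\sum_n c_n\chi_{A_n}}\le\bignorm{\sum_n c_n\chi_{I_n}}$, while taking two disjoint copies of each $A_n$ (total measure $\le1$) and fitting a copy of $I_n$ inside their union gives $\bignorm{\sum_n c_n\chi_{I_n}}\le2\bignorm{\sum_n c_n\chi_{A_n}}$. Hence $(\chi_{A_n})\overset{2}{\sim}(\chi_{I_n})$.

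It remains to pass to the normalized sequences and invoke the hypothesis. Writing $\norm{\chi_A}=\varphi_E(\abs{A})$ for the quasi-concave fundamental function of $E$ and using $\varphi_E(2t)\le2\varphi_E(t)$, the inequalities $\abs{A_n}\le\abs{I_n}<2\abs{A_n}$ yield $\norm{\chi_{A_n}}\le\norm{\chi_{I_n}}\le2\norm{\chi_{A_n}}$; since disjoint sequences in a Banach lattice are $1$-unconditional, the resulting uniformly bounded scalar factors relating $(\chi_{A_n}/\norm{\chi_{A_n}})$ to $(\chi_{I_n}/\norm{\chi_{I_n}})$ are harmless, so these two normalized disjoint sequences are equivalent with a universal constant. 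By the first paragraph the latter is in turn equivalent (with constant $1$) to the normalized disjoint Haar sequence $(h_{I_n}/\norm{h_{I_n}})$ — after reordering the $I_n$ to respect the Haar enumeration, which is harmless since equivalence to the $\ell_2$ basis is permutation-invariant — so the hypothesis produces a further subsequence of $(h_{I_n}/\norm{h_{I_n}})$ equivalent to the unit vector basis of $\ell_2$; tracing that subsequence back through the equivalences gives a subsequence of $(\chi_{A_n}/\norm{\chi_{A_n}})$ equivalent to the $\ell_2$ basis, which is precisely what restricted $2$-DH demands. The only real work — and the step I expect to be the main obstacle — is the bookkeeping in this reduction to dyadic intervals: selecting the $I_n$ with controlled lengths, packing them disjointly, and keeping the equivalence constants and normalization ratios under control simultaneously; each ingredient (equimeasurability, the doubling trick, quasi-concavity of $\varphi_E$, $1$-unconditionality of disjoint sequences) is standard, and none of the individual estimates is more than a line.
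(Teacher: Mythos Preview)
Your proof is correct. The paper is a survey and states this proposition without proof, referring to \cite{HST}; so there is nothing to compare against, but your argument is the natural one. Both directions factor cleanly through the identification of a disjoint normalized Haar sequence $(h_{I_n}/\norm{h_{I_n}})$ with the normalized characteristics $(\chi_{I_n}/\norm{\chi_{I_n}})$ via $\abs{h_I}=\chi_I$ and the lattice identity $\bignorm{\sum c_n x_n}=\bignorm{\sum\abs{c_n}\,\abs{x_n}}$ for disjoint $(x_n)$. For the nontrivial implication, the reduction of arbitrary disjoint sets to disjoint dyadic intervals is carried out correctly: the greedy packing of dyadic intervals of non-increasing lengths $\abs{I_n}$ with $\sum\abs{I_n}\le 1$ does give genuine dyadic intervals (the left endpoint of $I_n$ is a multiple of $\abs{I_n}$), the two-sided comparison $\bignorm{\sum c_n\chi_{A_n}}\le\bignorm{\sum c_n\chi_{I_n}}\le 2\bignorm{\sum c_n\chi_{A_n}}$ follows from equimeasurability and the doubling trick, and the bound $\varphi_E(2t)\le 2\varphi_E(t)$ needed for the normalization is just the triangle inequality applied to two disjoint sets of measure $t$. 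The remark about reordering the $I_n$ to match the Haar enumeration is harmless for the reason you give.
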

\vspace{1mm}
This implies the following:
{\it If  $E$ is an r.i. space on $[0,1]$ which is isomorphic  to a 2-DH r.i. space $F$, then  $E$\, is restricted 2-DH}. Indeed, this follows from \cite[Theorem 6.1]{JMST}, since either $E=F$ up to equivalence of norms, or $E=L_2[0,1]$, or the Haar basis in $E$ is equivalent to a sequence of disjoint elements in $F$ and, in this case,  the result is a consequence of Proposition \ref{weak2DH-HaarDH}.

As  mentioned in Section \ref{stab-dual}, the  2-DH property is not stable in general by duality. However, restricted  2-DH\  r.i. spaces on $[0,1]$ are stable under duality.
We do not know if in general an r.i. space $E$ on $[0,1]$ which is restricted 2-DH, must be 2-DH. In the  class of Orlicz spaces $L_{\varphi}[0,1]$ \, this is  the case:

\begin{proposition}\label{Orlicz2DH}
For an Orlicz space \,$L_{\varphi}[0,1]$, the following are equivalent:
\begin{enumerate}
\item $L_{\varphi}[0,1]$ is 2-DH.
\item $L_{\varphi}[0,1]$ \,  is restricted 2-DH.
\item Every function in $E_\varphi^\infty$ is equivalent to the function $t^2$ at $0$.
\end{enumerate}
\end{proposition}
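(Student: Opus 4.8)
The plan is to establish the cycle of implications $(1)\Rightarrow(2)\Rightarrow(3)\Rightarrow(1)$. The implication $(1)\Rightarrow(2)$ is immediate: if $L_\varphi[0,1]$ is $2$-DH, then in particular every normalized disjoint sequence of characteristic functions $(\chi_{A_n}/\|\chi_{A_n}\|)$ has a subsequence equivalent to the unit vector basis of $\ell_2$, since by definition every normalized disjoint sequence does (here one uses that $2$-DH means all normalized disjoint sequences have a subsequence equivalent to $\ell_2$, not merely pairwise equivalent). So $(2)$ holds.

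For $(3)\Rightarrow(1)$, I would invoke Theorem \ref{DH-Orlicz}(1): a separable Orlicz space $L_\varphi[0,1]$ is DH if and only if $E_\varphi^\infty\cong\{t^p\}$ for some $1\le p<\infty$, and in that case it is $p$-DH. If every function in $E_\varphi^\infty$ is equivalent to $t^2$ at $0$, then $E_\varphi^\infty\cong\{t^2\}$, so $L_\varphi[0,1]$ is $2$-DH. (If $L_\varphi[0,1]$ is not separable one can pass to the separable part, or simply note that the characterization of disjoint sequences in terms of $E_\varphi^\infty$ via \cite{LT3} does not require separability for the relevant direction.) This step is essentially a citation.

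The main work, and the main obstacle, is $(2)\Rightarrow(3)$: we must upgrade the hypothesis that \emph{characteristic-function} disjoint sequences behave like $\ell_2$ to the conclusion that \emph{every} function in $E_\varphi^\infty$ is equivalent to $t^2$. The key tool is the description of disjoint sequences in $L_\varphi[0,1]$ from \cite{LT3}: for each $\psi\in C_\varphi^\infty$ (more precisely, for each $\psi\in E_\varphi^\infty$) there exists a normalized disjoint sequence in $L_\varphi[0,1]$ equivalent to the canonical basis of the Orlicz sequence space $\ell_\psi$ (\cite[Proposition 4]{LT3}). Suppose, for contradiction, that some $\psi_0\in E_\varphi^\infty$ is not equivalent to $t^2$ at $0$; then $\ell_{\psi_0}$ is not isomorphic to $\ell_2$. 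On the other hand, using Proposition \ref{weak2DH-HaarDH}, restricted $2$-DH says that every subsequence of disjoint elements of the normalized Haar basis has a further subsequence equivalent to $\ell_2$. I would argue that a disjoint sequence realizing $\ell_{\psi_0}$ can be taken among (blocks of) the Haar system — or, alternatively, that restricted $2$-DH forces a constraint on the Matuszewska--Orlicz indices: since for every $p\in[s(L_\varphi),\sigma(L_\varphi)]$ there is a disjoint sequence of characteristic functions equivalent to $\ell_p$ (combine \cite[Proposition 4]{LT3} with the fact that dilates of characteristic functions realize the extreme indices), restricted $2$-DH forces $s(L_\varphi)=\sigma(L_\varphi)=2$. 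But $s(L_\varphi)=\sigma(L_\varphi)=2$ together with the structure of $E_\varphi^\infty$ (a set of functions squeezed between $t^{s}$ and $t^{\sigma}$, cf. \cite[Proposition 4.b.9]{LT1} and the discussion following Theorem \ref{DH-Orlicz}) does \emph{not} immediately give $E_\varphi^\infty\cong\{t^2\}$ — there exist Orlicz functions with $s(L_\varphi)=\sigma(L_\varphi)=2$ whose $E_\varphi^\infty$ contains functions inequivalent to $t^2$ (exactly the examples cited after Proposition \ref{DH square AM-compact}). So the indices alone are insufficient, and the real content is that restricted $2$-DH sees \emph{all} of $E_\varphi^\infty$, not just its endpoints.

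To close this gap I would use the following sharper fact, implicit in \cite[Proposition 4]{LT3} and its proof: given $\psi\in E_\varphi^\infty$, the disjoint sequence $(u_n)$ realizing $\ell_\psi$ can be chosen with each $u_n$ of the form $c_n\chi_{A_n}\circ(\text{affine rescaling})$, i.e.\ as a normalized multiple of a characteristic function, because the functions $\varphi(r\cdot)/\varphi(r)$ appearing in the definition of $E_\varphi^\infty$ arise precisely as the "local norms" of characteristic functions of small sets scaled by $r$. Thus if $\psi\in E_\varphi^\infty$ is not equivalent to $t^2$, one produces a disjoint sequence of normalized characteristic functions spanning $\ell_\psi\not\simeq\ell_2$, and no subsequence can be equivalent to $\ell_2$ (since $\ell_\psi$ is subsymmetric, every subsequence spans a copy of $\ell_\psi$), contradicting restricted $2$-DH via Proposition \ref{weak2DH-HaarDH}. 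Hence every $\psi\in E_\varphi^\infty$ is equivalent to $t^2$, which is $(3)$. I expect the delicate point to be justifying rigorously that the disjoint sequence realizing an arbitrary $\psi\in E_\varphi^\infty$ may be taken to consist of characteristic functions (equivalently, of Haar blocks); this is where one must look carefully at the construction in \cite{LT3} rather than merely quote its statement.
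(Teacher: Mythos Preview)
The paper is a survey and does not include a proof of this proposition; the result is attributed to \cite{HST}. So there is no ``paper's own proof'' to compare against, and I evaluate your argument on its own.

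Your cycle $(1)\Rightarrow(2)\Rightarrow(3)\Rightarrow(1)$ is correct, and your identification of the crux --- that every $\psi\in E_\varphi^\infty$ is realized by a disjoint sequence of \emph{characteristic} functions --- is exactly right. This is not delicate but a direct computation you should just carry out rather than hedge on. If $A\subset[0,1]$ has measure $a$ and $r=\varphi^{-1}(1/a)$, then $\|\chi_A\|_{L_\varphi}=1/r$, and for the normalized function $f=r\chi_A$ and any scalar $c$ one has
\[
\int_0^1\varphi(|c|f)\,d\mu=a\,\varphi(|c|r)=\frac{\varphi(|c|r)}{\varphi(r)}.
\]
Hence a disjoint normalized sequence $(r_n\chi_{A_n})$ with $\mu(A_n)=1/\varphi(r_n)$ is, at the modular level, the unit basis of the Musielak--Orlicz space with functions $\psi_n(t)=\varphi(tr_n)/\varphi(r_n)$. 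Given $\psi\in E_\varphi^\infty$, choose $r_n\to\infty$ with $\psi_n\to\psi$ uniformly on $[0,1]$ and (passing to a subsequence) $\sum 1/\varphi(r_n)\le 1$, so that disjoint sets of the required measures fit in $[0,1]$; a standard perturbation argument then gives that $(r_n\chi_{A_n})$ is equivalent to the unit basis of $\ell_\psi$. Since that basis is symmetric, restricted $2$-DH forces $\ell_\psi=\ell_2$, i.e.\ $\psi$ equivalent to $t^2$ at $0$. Your detour through the Boyd indices is, as you yourself noted, a dead end and should be deleted from the write-up; the characteristic-function realization is the whole argument for $(2)\Rightarrow(3)$.
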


For non-reflexive r.i. spaces we can use Lozanovski's theorem and the factorization through  $\ell_{1}$ and $c_{0}$ to obtain the following

\begin{proposition}\label{non-reflexive Kato}
If  $E$ is a non-reflexive r.i. space on $[0,1]$, then $E$  fails to have the Kato property.
\end{proposition}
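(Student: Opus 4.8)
The plan is to show that a non-reflexive r.i.\ space $E$ on $[0,1]$ always supports a strictly singular operator that is not compact, thereby failing the Kato property. The starting point is the dichotomy for non-reflexive r.i.\ spaces: if $E$ is non-reflexive, then either $E$ is not order continuous, or $E$ contains a complemented copy of $\ell_1$ via a disjoint sequence of indicator functions (this is the classical description, using that $E = L_1[0,1]$ is the extreme case and that failure of reflexivity forces either $L_\infty$ or $L_1$ behavior at the level of the fundamental function). Concretely, by Lozanovski\u{\i}'s theorem the condition of non-reflexivity translates into $E$ containing $c_0$ or $\ell_1$ lattice-isomorphically and complementably through a sequence of normalized disjoint characteristic functions $(\chi_{A_n}/\|\chi_{A_n}\|)$; call the corresponding (bounded) projection $P\colon E\to [\chi_{A_n}]$ and let $J\colon [\chi_{A_n}]\to E$ be the inclusion, so that $[\chi_{A_n}]$ is isomorphic to $\ell_1$ or $c_0$.

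Next I would build the non-compact strictly singular operator by factoring through $\ell_1$ or $c_0$. In the $\ell_1$ case, pick a normalized weakly null sequence $(y_n)$ in $E$ supported on shrinking sets (for instance normalized Haar functions with small support, or Rademacher-type functions on the $A_n$); such a sequence exists because $E$ is an r.i.\ space containing an isometric copy of $L_2$-like oscillations, or simply because any order continuous part carries weakly null disjoint sequences. Define $S\colon E\to E$ by $S = \big(\sum_n e_n^* \otimes y_n\big)\circ P$, where $(e_n^*)$ are the biorthogonal functionals of the $\ell_1$-basis; since the $\ell_1$-basis is bounded, $\sum_n e_n^*(Px)\,y_n$ converges absolutely, so $S$ is bounded. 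It is not compact because $S(\chi_{A_n}/\|\chi_{A_n}\|) = y_n$ has no norm-convergent subsequence. It is strictly singular because it factors through $\ell_1$: any operator from $E$ that factors through $\ell_1$ and then maps into a space with no complemented $\ell_1$ on the relevant subspaces must be strictly singular — more carefully, on any infinite-dimensional subspace $W\subset E$, either $P|_W$ is strictly singular (in which case so is $S|_W$), or $W$ contains a subspace on which $P$ is an isomorphism onto a subspace of $\ell_1$, and then $S$ restricted there is the composition of an isomorphism into $\ell_1$ with the map $(e_n^*)\mapsto (y_n)$ into the weakly null (hence, on $\ell_1$, strictly singular by Pitt/Rosenthal considerations) target. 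The $c_0$ case is dual: use Sobczyk to get a complemented $c_0$, and factor an operator through $c_0$ into a weakly null sequence; again the composition is strictly singular but not compact.

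The step I expect to be the main obstacle is making the ``strictly singular'' verification clean and uniform across the two cases without invoking heavy structure theory: one needs that an operator of the form $E \xrightarrow{P} \ell_1 \xrightarrow{(y_n)} E$ (resp.\ through $c_0$) is strictly singular whenever $(y_n)$ is weakly null and normalized. For $\ell_1$ this is essentially the statement that every operator from $\ell_1$ sending the unit vectors to a weakly null sequence is strictly singular, which follows from the Schur property of $\ell_1$ (an isomorphic copy of $\ell_1$ inside $E$ would be mapped to a weakly null basic sequence that is still equivalent to the $\ell_1$-basis, contradicting that $\ell_1$-bases are not weakly null); for $c_0$ one uses instead that $c_0$ has few complemented subspaces and that weakly null normalized sequences in the target can be taken to span spaces without $c_0$, or one passes to the adjoint and argues on $\ell_1$. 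I would also need to confirm that the requisite weakly null normalized sequence $(y_n)$ genuinely exists in $E$; for an r.i.\ space on $[0,1]$ this is automatic as soon as $E \neq L_\infty$ up to renorming (which is the case here since $E$ is separable-or-has-separable-part by the non-reflexive r.i.\ dichotomy, or one simply works inside the order continuous part $E^o$ which is nontrivial and r.i.). Assembling these pieces gives a strictly singular non-compact $S\in\LL(E)$, so $E$ fails the Kato property.
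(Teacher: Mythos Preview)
Your overall strategy---Lozanovski's dichotomy followed by factorization through $\ell_1$ or $c_0$---is exactly what the paper indicates, and your treatment of the $\ell_1$ case is essentially correct: when $E$ is order continuous but $E^*$ is not, a disjoint sequence of normalized characteristic functions spans a complemented copy of $\ell_1$ (this is \cite[Lemma~2.3.11]{MN}, already quoted in the survey), the Rademacher functions $(r_n)$ form a weakly null normalized sequence in $E$ (since $E^*=E'\subset L_1$ and $\int_0^1 r_n g\to 0$ for every $g\in L_1$), and the map $\ell_1\to E$, $e_n\mapsto r_n$, is strictly singular by your Schur-type argument.

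The genuine gap is in the $c_0$ case. If $E$ is not order continuous then $E$ is \emph{not separable}: r.i.\ spaces on $[0,1]$ are Dedekind $\sigma$-complete, and a separable Dedekind $\sigma$-complete Banach lattice is automatically order continuous. Hence Sobczyk's theorem is unavailable, and there is no reason for the lattice copy of $c_0$ to be complemented---indeed in $L_\infty[0,1]$ it is not. Your construction, which requires a bounded projection $P\colon E\to c_0$, therefore breaks down precisely in the cases you need to cover; the vague suggestion to ``work inside the order continuous part $E^o$'' does not help, since the operator must be defined on all of $E$.

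The repair is to reverse the roles: you do not need a projection onto $c_0$, only a strictly singular map \emph{into} $c_0$ together with the lattice embedding $J\colon c_0\hookrightarrow E$ furnished by the failure of order continuity. For the former, use the inclusion $\iota\colon E\hookrightarrow L_1$ followed by the Rademacher-coefficient map $R\colon L_1\to c_0$, $f\mapsto\bigl(\int_0^1 f\,r_n\bigr)_n$. The map $R$ is strictly singular because every infinite-dimensional subspace of $c_0$ contains a copy of $c_0$, whereas $L_1$ is weakly sequentially complete and contains none. The composition $T=JR\iota$ is then strictly singular on $E$, and it is not compact since $Tr_m=Je_m$ is a seminormalized sequence in $E$ with no norm-convergent subsequence.
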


In the class of Lorentz function spaces we have that \emph{$L_{p,q}[0,1]$ and $\Lambda(W,p)[0,1]$ have the Kato property if and only if they are 2-DH}.

On the other hand, for Orlicz spaces the study of the Kato property is more involved. Clearly every  2-DH  Orlicz space on \,$[0,1]$\, has the Kato property. Remarkably              {\it if  \,$L_\varphi [0,1]$\,  is a reflexive 2-convex (or  2-concave) Orlicz space with  the Kato property then  it is 2-DH}. The proof uses the fact that the  associated  Orlicz sequence space $\ell_{\psi}$ is $2$-convex (or 2-concave), so that the inclusion $\ell_{2} \hookrightarrow \ell_{\psi}$ \,(or $\ell_{\psi} \hookrightarrow \ell_{2}$) is  strictly singular. Finally, by composing with canonic projections we get a non-compact strictly singular operator on  $L_\varphi [0,1]$.

\vspace{1mm}

    The  condition $(iii)$ in Proposition \ref{Orlicz2DH} which characterizes 2-DH  Orlicz spaces can be rewritten as the formula
$$
\sup_{0<t<\infty}\limsup_{u\rightarrow\infty} \frac{\varphi(tu)}{t^2\varphi(u)}<\infty.
$$
Strengthening slightly this condition we get further necessary conditions for the Kato property in  Orlicz spaces.

\begin{theorem}
Let $L_\varphi[0,1]$ be a reflexive Orlicz space. If
$$
\lim\limits_{t\to0}\lim\limits_{u\to\infty} \frac{\varphi(tu)}{t^2\varphi(u)}\in\{0,\infty\},
$$
then $L_\varphi[0,1]$
 fails to have the Kato property.
\end{theorem}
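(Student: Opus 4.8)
The plan is to show that under the hypothesis $\lim_{t\to0}\lim_{u\to\infty}\frac{\varphi(tu)}{t^2\varphi(u)}\in\{0,\infty\}$, the space $L_\varphi[0,1]$ contains a complemented disjoint sequence spanning a copy of some $\ell_\psi$ with $\psi$ not equivalent to $t^2$, and then to manufacture a strictly singular non-compact operator on that copy by composing a strictly singular inclusion (between $\ell_\psi$ and $\ell_2$, or between two such Orlicz sequence spaces) with the complementing projection. This mirrors exactly the argument sketched just before the statement for the 2-convex/2-concave case; the point of the theorem is to handle the remaining reflexive Orlicz spaces via a two-variable limit condition rather than a one-sided convexity assumption.

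First I would recall from \cite[Proposition 4]{LT3} that for every $\psi\in C_\varphi^\infty$ there is a normalized disjoint sequence in $L_\varphi[0,1]$ equivalent to the canonical basis of $\ell_\psi$, and (as already used in this section) that such disjoint sequences in an r.i.\ space span complemented subspaces via the averaging projection \cite[Theorem 2.a.4]{LT2}. Thus it suffices to produce a suitable $\psi\in C_\varphi^\infty$. Here the hypothesis is exploited: the quantity $L(t)=\lim_{u\to\infty}\frac{\varphi(tu)}{t^2\varphi(u)}$ (which one should first argue exists for the relevant $t$, or pass to a subsequential version as in the definition of $E_\varphi^\infty$) tends to $0$ or to $\infty$ as $t\to0$. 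In either case this forces some function $\psi\in E_\varphi^\infty\subset C_\varphi^\infty$ to be \emph{not} equivalent to $t^2$ at $0$: if $L(t)\to0$ one extracts a $\psi$ with $\psi(t)/t^2\to0$, i.e.\ $\psi$ is genuinely 2-concave-type and strictly below $t^2$; if $L(t)\to\infty$ one gets $\psi(t)/t^2\to\infty$, strictly above $t^2$. Correspondingly the natural inclusion $\ell_2\hookrightarrow\ell_\psi$ (in the first case) or $\ell_\psi\hookrightarrow\ell_2$ (in the second) is bounded and strictly singular — strict singularity follows because $\ell_\psi$ and $\ell_2$ share no common infinite-dimensional subspace when the associated Orlicz functions are non-equivalent at $0$, which is where the quantitative strengthening over condition $(iii)$ of Proposition \ref{Orlicz2DH} is used.

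Next I would assemble the operator. Let $Q:L_\varphi[0,1]\to\ell_\psi$ be the complementing projection onto the disjoint copy of $\ell_\psi$, let $J:\ell_2\hookrightarrow\ell_\psi$ be the strictly singular inclusion (case $L(t)\to0$), and let $R:\ell_2\hookrightarrow L_\varphi[0,1]$ be a copy of $\ell_2$ inside $L_\varphi[0,1]$ — for reflexive $L_\varphi[0,1]$, $t^2$ lies in $C_\varphi^\infty$ when the indices straddle $2$, or else one uses the span of Rademacher functions, which is isomorphic to $\ell_2$ in any r.i.\ space on $[0,1]$. Then $T=R\circ(\text{identification }\ell_\psi\supset J(\ell_2))\circ Q$, or more cleanly $T=R\,J\,(\text{projection }\ell_\psi\to\ell_2)\,Q$ when $\ell_2$ is complemented in $\ell_\psi$ — in the opposite case $L(t)\to\infty$ one composes in the reverse order using the inclusion $\ell_\psi\hookrightarrow\ell_2$. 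In all cases $T$ is strictly singular (it factors through a strictly singular operator) but not compact (its restriction to a copy of $\ell_2$ or $\ell_\psi$ is an isomorphism onto its range), so $L_\varphi[0,1]$ fails the Kato property.

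\textbf{Main obstacle.} The delicate point is the extraction step: the hypothesis is stated as an iterated limit $\lim_{t\to0}\lim_{u\to\infty}$, and one must convert this into the existence of a genuine element $\psi\in E_\varphi^\infty$ (a limit in $C[0,1]$ of dilations $\varphi(s\cdot)/\varphi(s)$ with $s\to\infty$) that is bounded away from, or dominated by, $t^2$ on a neighbourhood of $0$ — uniformly enough that the resulting inclusion is genuinely strictly singular rather than merely non-invertible on the whole space. This requires a careful diagonal/compactness argument on the set $E_{\varphi,s}^\infty$ together with the precise relationship between non-equivalence of Orlicz functions at $0$ and the absence of common subspaces of $\ell_\psi$ and $\ell_2$, along the lines of \cite[Theorem 4.b.9]{LT1} and \cite{Hernandez-Peirats}; the rest is a routine composition-of-ideals argument.
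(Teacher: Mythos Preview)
Your approach is essentially the one the paper uses: locate a complemented disjoint copy of some $\ell_\psi$ inside $L_\varphi[0,1]$ for which the inclusion between $\ell_\psi$ and $\ell_2$ is strictly singular, and then compose that inclusion with projections and embeddings (the Rademacher copy of $\ell_2$ on one side, the disjoint copy of $\ell_\psi$ on the other) to produce a strictly singular non-compact endomorphism. The paper does this with normalized characteristic functions of disjoint sets --- so complementation is immediate from the averaging projection --- and names the decisive tool as \emph{Kalton's characterization of strictly singular inclusions between Orlicz sequence spaces} (cf.\ \cite{LT1}).

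The one point to tighten is your justification for strict singularity. Mere non-equivalence of $\psi$ and $t^2$ at $0$ does \emph{not} imply that $\ell_\psi$ and $\ell_2$ share no infinite-dimensional subspace (an Orlicz sequence space can easily contain $\ell_2$ without its defining function being equivalent to $t^2$), and the references you invoke (\cite[Theorem 4.b.9]{LT1}, \cite{Hernandez-Peirats}) concern minimal Orlicz functions rather than the criterion you need. What actually does the work is Kalton's criterion, and the iterated-limit hypothesis is used to select a sequence of disjoint sets (equivalently a $\psi$) for which that criterion applies. You correctly flagged the extraction step as the main obstacle; once you plug in Kalton's result there, your outline matches the paper's argument.
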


The proof relies on factorization  results. Thus, in the case that  $\varphi$ satisfies
$$
\lim\limits_{t\to0}\lim\limits_{u\to\infty} \frac{\varphi(tu)}{t^2\varphi(u)}=\infty.
$$
it can be shown, using N. Kalton's characterization for strictly singular inclusions between Orlicz sequence spaces  (cf. \cite{LT1}), that  there exists a sequence of disjoint measurable sets $(A_k)$ in $[0,1]$, and an
Orlicz function $\psi$  such that the sequence $(\chi_{A_k}/\|\chi_{A_k}\|)$\,  is equivalent to the unit vector basis of the sequence space \,$\ell_\psi$; in addition, the inclusion $\ell_\psi\subset
\ell_2$ holds and it  is  strictly singular.

\vspace{2mm}

We do not know whether every Orlicz space $L_\varphi[0,1]$ with the Kato property must be 2-DH. Notice that for infinite measures the answer is negative as shown with the following

\begin{example}
Consider
$$
\varphi(t)=
\left\{
\begin{array}{ccc}
  \frac{1}{\log 2}t^{2} &   &  t \in [0,1]  \\
  &   &    \\
 \frac{t^{2}}{log \, ( 1+t)} &   &    t\in [1, \infty).
\end{array}
\right.
$$
Then the reflexive space $L_\varphi(0,\infty)$  has the Kato property but is not  2-DH.
\end{example}

Indeed, note that $L_{\varphi}(0,\infty)$  is isomorphic to $ L_{\varphi}[0,1]$  (\cite{JMST}).  But  $ L_{\varphi}[0,1]$ is  2-DH and thus it has the Kato property.  On the other hand, it can be shown that the function \, $t^{2}log(|log t|)$\, belongs to  the set \, $C_{\varphi}(0,\infty)$. Hence,  using Theorem \ref{DH-Orlicz} we conclude  that the space  $L_{\varphi}(0,\infty)$ is not 2-DH.

\section{Some open questions }

In this final section we collect several questions that arised along the preceding sections.

\begin{question}
Does every reflexive Banach lattice contain a disjoint sequence whose span is complemented?
\end{question}

\begin{question}
Is every separable DH Banach lattice DC?
\end{question}

\begin{question}
Is there a reflexive $p$-DH r.i. space on $[0,1]$ whose dual is not DH?
\end{question}

\begin{question}
Is every DH atomic Banach lattice with a symmetric basis isomorphic to $l_p,  (1\le p<\infty)$ or $c_0$?
\end{question}

\begin{question}
If $X$ is an r.i. space on $[0,1]$ with the Kato property, must $X$ be 2-DH? In particular, is every Orlicz space $L_{\varphi}[0,1]$ with the Kato property necessarily 2-DH?
\end{question}

\section*{Acknowledgments}
\thanks{We want to thank E. M. Semenov,  E. Spinu and V. G. Troitsky for all the stimulating discussions that motivated many of the results included in this survey.}

\end{document}